\newtheorem{Def}{Definition}[section]
\newtheorem{Lem}[Def]{Lemma}
\newtheorem{Thm}[Def]{Theorem}
\newtheorem{Ass}[Def]{Assumption}
\theoremstyle{definition}
\newtheorem{Rem}[Def]{Remark}
\newtheorem{Eg}[Def]{Example}
\newcommand{\rd}{\,\mathrm{d}}
\newcommand{\1}{{\bf 1}}
\newcommand{\bE}{\mathbb{E}}\newcommand{\bF}{\mathbb{F}}\newcommand{\bN}{\mathbb{N}}\newcommand{\bP}{\mathbb{P}}\newcommand{\bR}{\mathbb{R}}\newcommand{\bW}{\mathbb{W}}
\newcommand{\sF}{\mathscr{F}}
\newcommand{\bk}{\mathbf{k}}
\begin{document}
\title{
%Strong solutions and numerical schemes for non-colliding particle systems
Strong solution and approximation of time-dependent radial Dunkl processes with multiplicative noise
%Strong solutions of time-dependent radial Dunkl processes
%Time inhomogeneous radial Dunkl processes
}
\author{
Minh-Thang Do\footnote{
Institute of Mathematics, Vietnam Academy of Science and Technology, 18 Hoang Quoc Viet, Cau Giay, Hanoi, Viet Nam, email~:~\texttt{dmthang@math.ac.vn}
},
\quad
Hoang-Long Ngo\footnote{
Hanoi National University of Education, 136 Xuan Thuy, Cau Giay, Hanoi, Vietnam, email~:~\texttt{ngolong@hnue.edu.vn}
},
\quad and \quad
Dai Taguchi\footnote{
Department of Mathematics,
%Faculty of Engineering Science,
Kansai University,
%3-3-35,
Suita,
Osaka,
564-8680,
Japan,
email~:~\texttt{taguchi@kansai-u.ac.jp}
}
}
%\date{}
\maketitle
\begin{abstract}
We study the strong existence and uniqueness of solutions within a Weyl chamber for a class of time-dependent particle systems driven by multiplicative noise. This class includes well-known processes in physics and mathematical finance. We propose a method to prove the existence of negative moments for the solutions. This result allows us to analyze two numerical schemes for approximating the solutions. The first scheme is a $\theta$-Euler--Maruyama scheme, which ensures that the approximated solution remains within the Weyl chamber. The second scheme is a truncated $\theta$-Euler--Maruyama scheme, which produces values in $\bR^{d}$ instead of the Weyl chamber $\bW$, offering improved computational efficiency.\\
\textbf{2020 Mathematics Subject Classification}: 65C30; 60H35; 91G60; 17B22\\
\textbf{Keywords}:
Non-colliding particle systems;
Dyson's Brownian motions;
Radial Dunkl processes;
Strong solution;
Numerical schemes
\end{abstract}

\section{Introduction}\label{Sec_1}

In this paper, we consider a stochastic particle system given by  the following stochastic differential equations (SDEs)
\begin{align}\label{SDE_1}
\rd X(t)
&
=
\sigma(t,X(t))\rd B(t)
+
b(t,X(t))\rd t
+
\sum_{\alpha \in R_{+}}
\frac{k(t,\alpha)}{\langle \alpha, X(t) \rangle}
\alpha
\rd t,~X(0) = \xi \in \bW,~t \in [0,T],
\end{align}
where $B$ is an $r$-dimensional Brownian motion defined on a  probability space $(\Omega, \sF, \bP)$ with the filtration $(\sF_{t})_{t \geq 0}$ satisfying the usual condition, $R_{+}$ is a positive root system in $\bR^{d}$, $\bW$ is a Weyl chamber in $\bR^{d}$, $b:[0,T] \times \bW \to \bR^{d}$, $\sigma:[0,T] \times \bW \to \bR^{d \times r}$ and $k:[0,T] \times R_{+}  \to (0,\infty)$ are measurable functions.

If $r = d$, $b \equiv 0$, $\sigma$ is the identity matrix, and $k$ is a multiplicity function defined on $R_{+}$, then $X = (X(t))_{t \in [0,T]}$ represents a classical radial Dunkl process. Consequently, equation \eqref{SDE_1} can be interpreted as a generalized Dunkl process with drift and multiplicative noise.

The class of radial Dunkl processes encompasses several well-known processes, including Bessel processes, the square root of multidimensional Wishart processes, and Dyson's Brownian motions.
The square root of the Bessel process, also known as the Cox--Ingersoll--Ross process in mathematical finance, and the square root of the multidimensional Wishart process serve as models for the evolution of spot interest rates in both one-dimensional and multidimensional settings, respectively.
Dyson's Brownian motions, which describe non-colliding particle systems, have been extensively studied in mathematical physics and random matrix theory, as they share the same distribution as the ordered eigenvalues of certain matrix-valued Brownian motions.

The study of radial Dunkl processes \eqref{SDE_1} is challenging due to the presence of stiff terms $\frac{1}{\langle \alpha, x \rangle}$ in the drift coefficient.
The existence and uniqueness of strong solutions for some classes of non-colliding particle systems, including Dyson's Brownian motions, have been extensively studied (see, for example, \cite{CeLe97, CeLe01,De07, GrMa13, GrMa14, NT20, NT24+, RoSh,Sc07}).
We will demonstrate the existence and uniqueness of strong solutions to equation \eqref{SDE_1}, which corresponds to a more general class of particle systems than those considered in \cite{NT20, NT24+}. It is important to note that in \cite{NT24+}, the existence of solutions to drifted Dunkl processes with additive noise was proven by using the Girsanov theorem.
However, applying that approach to equation \eqref{SDE_1} is hardly possible due to multiplicative noise.
Our new approach involves considering a sequence of truncated SDEs and showing that they converge to a process satisfying equation \eqref{SDE_1}.

The numerical approximation of SDEs whose solutions remain confined within a specific domain poses one of the most significant challenges in stochastic numerics. This challenge stems from the requirement that any approximate solution must also remain within this domain. The explicit Euler--Maruyama schemes fail to satisfy this essential property.
Many authors have studied numerical schemes for one-dimensional SDEs whose solutions remain within a subset of $\mathbb{R}$ (see \cite{Al, BeBoDi, BoDi, ChJaMi16, DeNeSz, NeSz}).
However, despite their considerable practical significance, there are relatively few results on the numerical analysis of non-colliding particle systems (\cite{LiMe, LT20, NT20, NT24+}).
To study the convergent rates of numerical scheme for SDEs whose coefficient involves singular terms of the form $\frac{1}{\langle \alpha, X(t)\rangle}$, it is crucial to show that the negative moments $\sup_{0\leq t \leq T}\bE[\langle \alpha, X(t) \rangle^{-p}]$ are finite for some $p>0$.
This result has been obtained for systems of interacting Brownian particles and Brownian particles with nearest neighbor repulsion  (see Lemma 3.4 and Lemma 3.11 in  \cite{NT20}).
Unfortunately, the bounds for negative moments in \cite{NT20} are obtained only for systems with a relatively small number of particles.
In \cite{NT24+}, utilizing the relationship between root systems and harmonic functions, the existence of negative moments has been demonstrated for random systems associated with Dunkl processes involving a large number of particles.
The approach in \cite{NT24+}, which relies on a change of probability measure, is effective for SDEs with additive noise. 

In this paper, we introduce a novel technique to establish the existence of negative moments for a broad class of non-colliding particle systems governed by SDE \eqref{SDE_1} with multiplicative noise (see Theorem \ref{Thm_negative}).
This enables us to analyze two numerical schemes for approximating the solution of SDE \eqref{SDE_1}.
The first scheme is a $\theta$-Euler--Maruyama scheme, ensuring that the approximate solution remains within the Weyl chamber $\bW$.
The second scheme is a truncated $\theta$-Euler--Maruyama scheme, which produces values in $\bR^{d}$ rather than within the Weyl chamber $\bW$, but offers the advantage of efficient computational implementation. 

This paper is organized as follows. Section \ref{Sec_2} addresses the existence, uniqueness, and moment properties of the stochastic particle system \eqref{SDE_1}.
Section \ref{Sec_3} outlines a general procedure for verifying the convergence of the $\theta$-scheme applied to SDEs with values in an open set and a one-sided Lipschitz drift.
Finally, Section \ref{Sec_4} introduces two numerical schemes for the stochastic particle system \eqref{SDE_1} and examines their convergence rates in the $L^{p}$-norm.

\subsection*{Notations}
In this paper, elements of $\bR^{d}$ are column vectors, and for $x \in \bR^{d}$, we denote $x=(x_{1},\ldots,x_{d})^{\top}$, where for a matrix $A$, $A^{\top}$ stands for the transpose of $A$. We use Frobenius norm $|A| = (\sum_{i,j}A_{i,j}^2)^{1/2}$.  
Let $\langle \cdot, \cdot \rangle$ denote the standard Euclidean inner product and $e_{1},\ldots,e_{d}$ be the standard basis vectors of $\bR^{d}$.
For a function $g: D \to \bR^d$, we denote $\|g\|_\infty = \sup_{x \in D} |g(x)|$.  For a function $f:\bR^{d} \to \bR$, we denote the gradient of $f$ by $\nabla f:=(\frac{\partial f}{\partial x_1},\ldots,\frac{\partial f}{\partial x_d})^{\top}$ and the Laplacian  by $\Delta f:=\sum_{i=1}^{d} \frac{\partial^2 f}{\partial x_i^2}$.
Let $C_b^{1,2}([0, T] \times \bW;\bR^{d})$ be the space of $\bR^{d}$-valued functions from $[0, T] \times \bW$ such that the first derivative in time and the first and second derivatives in space exist and are bounded.
For a finite set $A$, we denote $|A|$ the number of all elements of $A$.
We fix $T>0$ and a probability space $(\Omega, \sF,\bP)$ with a filtration $\bF=(\sF_{t})_{t \in [0,T]}$ satisfying the usual conditions and $B=(B(t))_{t \in [0,T]}$ is a $r$-dimensional $\bF$-Brownian motion. Whenever we define a stopping time, we use the convention that $\inf \emptyset = + \infty$. 

\section{Radial Dunkl processes with multiplicative noise}\label{Sec_2}
\subsection{Root systems, Weyl chamber and harmonicity} \label{subsec_root}
This section provides a concise overview of root systems, Weyl groups, Weyl chambers, and their relation to harmonic function. We refer the reader to \cite{DuXu, Hum12_2, Hum12_1} for a more detailed exposition.

A {\it root system} $R$ in $\bR^{d}$ is a finite set of nonzero vectors in $\bR^{d}$ such that:
(R1) \ 
$R \cap\{c \alpha ~;~c \in \bR\}=\{\alpha,-\alpha\}$ for any $\alpha \in R$;
(R2) \ 
$\sigma_{\alpha}(R)=R$ for any $\alpha \in R$, where $\sigma_{\alpha}$ is the reflection through the hyperplane perpendicular to $\alpha$. 
The element of a root system is called root.
A sub-group $W=W(R)$ of $O(d)$ is called the \emph{Weyl group} generated by a root system $R$, if it is generated by the reflections $\{\sigma_{\alpha}~;~\alpha \in R\}$.
Each root system can be written as a disjoint unions $R = R_+ \cup (-R_+)$, where $R_+$ and $-R_+$ are separated by a hyperplane $H_\beta := \{x \in \mathbb R^d : \langle \beta, x \rangle = 0\}$ with $\beta \not \in R$. Such a set $R_+$ is called a \emph{positive root system}.
A function $\bk: R \to \mathbb{R}$ is called a \emph{multiplicity function} if it is invariant under the natural action of $W$ on $R$, that is, $\bk(\alpha) = \bk(\beta)$ whenever there exists $w \in W$ such that $w \alpha = \beta$. 
%It follows that $\bk(\alpha^{j}) \equiv \bk_{j}$ for every $\alpha^{j} \in R^{j}$.
A connected component of $\mathbb{R}^d \setminus \bigcup_{\alpha \in R} H_{\alpha}$ is called a \emph{Weyl chamber} of the root system $R$.
In particular, 
\begin{align*}
\bW
:=
\{x \in \bR^d~|~\langle \alpha, x \rangle>0,~\forall \alpha \in R_{+}\}
\end{align*}
is called the \emph{fundamental Weyl chamber}.

A remarkable property of (reduced) root systems in stochastic calculus is that the polynomial function $\prod_{\alpha \in R_{+}} \langle \alpha, x \rangle$, where $x \in \bW$, is $\Delta$-harmonic, meaning that $\Delta \prod_{\alpha \in R_{+}} \langle \alpha, x \rangle = 0$, (see, Theorem 6.2.6 in \cite{DuXu}).
Moreover, by using this property, for any $x \in \bW$, it holds that 
\begin{align}\label{harmonic_1}
\sum_{\alpha \in R_{+}}
\frac
{|\alpha|^2}
{\langle \alpha,x \rangle^2}
=
\sum_{\alpha, \beta \in R_{+}}
\frac
{\langle \alpha, \beta \rangle}
{\langle \alpha,x \rangle \langle \beta,x \rangle},
\end{align}
(see, e.g. Lemma 2.2 in \cite{NT24+}).

\subsection{Existence and uniqueness of radial Dunkl processes with multiplicative noise}

We define $f_{k}:[0,T] \times \bW \to \bR^{d}$ and $\overline{\sigma}:[0,T] \times \bR^{d} \to \bR$ by
\begin{align*}
f_k(t,x)
&:=
\sum_{\alpha \in R_+} \frac{k(t, \alpha)}{\langle \alpha, x \rangle}\alpha,
\\
\overline{\sigma}(t,x)
&:=
\left\{\begin{array}{ll}
\displaystyle
\max_{i=1,\ldots,d}|\sigma_{i,i}(t,x)|,
&\text{ if }
r=d
\text{ and }
\sigma_{i,j}(t,x)=0,~ \text{ for all } i \neq j,
\\
\displaystyle
|\sigma(t,x)|,
&\text{ otherwise}.
\end{array}\right.
\end{align*}
Throughout the paper, we will work under the following assumptions on the coefficients $\sigma$, $b$, and $k$.
\begin{Ass}\label{Ass_1}
\begin{enumerate}[label=(\roman*)]
\item \label{Ass_1: i}
The drift coefficient $b:[0,T] \times \bR^{d} \to \bR^{d}$ is Lipschitz continuous uniformly in time, that is, $\|b\|_{\mathrm{Lip}}:=\sup_{t \in [0,T],x \neq y}\frac{|b(t,x)-b(t,y)|}{|x-y|}<\infty$, and $\|b(\cdot,0)\|_{\infty}<\infty$.

\item \label{Ass_1: ii}
The diffusion coefficient $\sigma:[0,T] \times \bR^{d} \to \bR^{d \times r}$ is Lipschitz continuous uniformly in time, that is $\|\sigma\|_{\mathrm{Lip}}:=\sup_{t \in [0,T],x \neq y}\frac{|\sigma(t,x)-\sigma(t,y)|}{|x-y|}<\infty$.

\item \label{bounded k}
For any $(t, \alpha) \in [0,T] \times R_{+}$, it holds that $\|\overline{\sigma}(t,\cdot)\|_{\infty}^{2} \leq 2 k(t,\alpha)$ and $\|k(\cdot, \alpha)\|_{\infty}<\infty$.

\item[(iv)] \label{Ass_1: iv}
There exists $K:[0,T] \to [0,\infty)$ such that
\begin{align*}
\int_{0}^{T}K(t) \rd t<\infty
\text{ and }
\sup_{(\alpha,x) \in R_{+} \times \bW}
\frac{\langle \alpha, -b(t,x) \rangle}{\langle \alpha,x \rangle}
\leq K(t).
\end{align*}

\item[(v)] \label{Ass_1: v}
For any $(t,x) \in [0,T] \times \bW$, it holds that
\begin{align*}
\sum_{\alpha \in R_{+}}
\frac{k(t,\alpha) |\alpha|^{2}}{\langle \alpha,x \rangle^{2}}
=
\sum_{\alpha \in R_{+}}
\sum_{\beta \in R_{+}}
\frac{k(t,\beta) \langle \alpha,\beta \rangle}{\langle \alpha,x \rangle \langle \beta,x \rangle}.
\end{align*}
\end{enumerate}
\end{Ass}

\begin{Thm}\label{Thm_1}
Under Assumption \ref{Ass_1}, SDE \eqref{SDE_1} has a unique $\bW$-valued  strong solution. Moreover, for any $p \in (0,\infty)$, it holds that
\begin{align}\label{moment_1}
\bE\Big[\sup_{t \in [0,T]}|X(t)|^{p}\Big]
<\infty.
\end{align}
\end{Thm}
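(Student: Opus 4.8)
The plan is to construct the solution via a truncation (localization) argument, since the drift term $f_k(t,x)=\sum_{\alpha\in R_+}\frac{k(t,\alpha)}{\langle\alpha,x\rangle}\alpha$ blows up on the walls of $\bW$. For $n\in\bN$ define a smooth, globally Lipschitz modification $f_k^{(n)}$ of $f_k$ that agrees with $f_k$ on the set $\bW_n:=\{x\in\bW:\langle\alpha,x\rangle>1/n\text{ for all }\alpha\in R_+\}$ and is bounded on all of $\bR^d$. Together with Assumption \ref{Ass_1}\ref{Ass_1: i}--\ref{Ass_1: ii}, the coefficients $(b,\sigma,f_k^{(n)})$ are globally Lipschitz, so the truncated SDE has a unique strong solution $X^{(n)}$ on $[0,T]$ with all moments finite. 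Let $\tau_n:=\inf\{t\ge 0:X^{(n)}(t)\notin\bW_n\}$; by uniqueness $X^{(n)}=X^{(m)}$ on $[0,\tau_n\wedge\tau_m]$ for $m\ge n$, so $(\tau_n)$ is nondecreasing and one may define $X$ consistently on $[0,\tau)$, $\tau:=\lim_n\tau_n$. The crux is to show $\tau\ge T$ almost surely, i.e. the solution never reaches the walls; this is the main obstacle.

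To prove non-attainment I would analyze the process $Y(t):=-\log\prod_{\alpha\in R_+}\langle\alpha,X(t)\rangle=-\sum_{\alpha\in R_+}\log\langle\alpha,X^{(n)}(t)\rangle$ up to $\tau_n$. Applying It\^o's formula, the second-order (diffusion) contribution produces the term $\tfrac12\sum_{\alpha\in R_+}\frac{|\sigma^\top\alpha|^2}{\langle\alpha,X\rangle^2}$ (or, in the diagonal case, a quantity dominated by $\tfrac12\overline\sigma^2\sum_\alpha\frac{|\alpha|^2}{\langle\alpha,X\rangle^2}$), while the singular drift $f_k$ contributes $-\sum_{\alpha,\beta\in R_+}\frac{k(t,\beta)\langle\alpha,\beta\rangle}{\langle\alpha,X\rangle\langle\beta,X\rangle}$. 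Assumption \ref{Ass_1}\ref{Ass_1: v} is precisely the harmonicity identity that lets the drift contribution cancel $\sum_\alpha\frac{k(t,\alpha)|\alpha|^2}{\langle\alpha,X\rangle^2}$, and the bound $\overline\sigma(t,\cdot)^2\le 2k(t,\alpha)$ in \ref{bounded k} ensures the surviving $\tfrac12\overline\sigma^2$-term is dominated by this cancelling sum, so the singular part of the drift of $Y$ is nonpositive. The term coming from $b$ is controlled by $\sum_{\alpha}\frac{\langle\alpha,-b(t,X)\rangle}{\langle\alpha,X\rangle}\le |R_+|K(t)$ via \ref{Ass_1: iv}, which is integrable. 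Hence $Y(t)+\int_0^t(\text{const})K(s)\,ds$ is, up to $\tau_n$, a local supermartingale plus a martingale part with bounded quadratic variation on compacts, so $\bE[Y(t\wedge\tau_n)]$ stays bounded uniformly in $n$; if $\tau<T$ with positive probability then $Y(\tau_n)\to+\infty$ on that event, contradicting the uniform bound. This forces $\tau\ge T$ a.s., and $X$ is then a $\bW$-valued strong solution of \eqref{SDE_1} on $[0,T]$, with pathwise uniqueness inherited from the Lipschitz truncated equations on each $[0,\tau_n]$.

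Finally, for the moment bound \eqref{moment_1}: on $[0,\tau_n]$ the process $X^{(n)}$ solves an SDE with coefficients $b,\sigma$ of linear growth (from \ref{Ass_1: i}--\ref{Ass_1: ii}) plus the extra drift $f_k(t,X^{(n)})$, which is bounded on $\bW_n$ but not uniformly in $n$; however $\langle f_k(t,x),x\rangle=\sum_{\alpha\in R_+}k(t,\alpha)\,\frac{\langle\alpha,x\rangle\langle\alpha,x\rangle}{\langle\alpha,x\rangle}\cdot\frac{1}{|?|}$ — more precisely $\langle x,f_k(t,x)\rangle=\sum_{\alpha\in R_+}k(t,\alpha)\langle\alpha,x\rangle^{-1}\langle\alpha,x\rangle=\sum_{\alpha\in R_+}k(t,\alpha)\ge 0$ is bounded by $\sum_\alpha\|k(\cdot,\alpha)\|_\infty$. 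So when one applies It\^o to $|X^{(n)}(t)|^p$ (or $(1+|X^{(n)}(t)|^2)^{p/2}$), the inner product of $X^{(n)}$ with the singular drift is bounded by a constant, and the remaining terms obey the usual linear-growth estimates; Gr\"onwall's inequality then gives $\bE[\sup_{t\le T\wedge\tau_n}|X^{(n)}(t)|^p]\le C$ uniformly in $n$, and letting $n\to\infty$ with $\tau_n\uparrow T$ and Fatou yields \eqref{moment_1}. The BDG inequality handles the $\sup$ inside the expectation in the standard way.
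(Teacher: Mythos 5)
Your overall strategy — truncate the singular drift, apply It\^o's formula to a log-type Lyapunov functional $\sum_\alpha -\log\langle\alpha,X\rangle$, cancel the dominant singular contribution via the harmonicity identity in Assumption~\ref{Ass_1}(v) together with the bound $\overline\sigma^2\le 2k$ in (iii), control the $b$-part via (iv), and deduce non-attainment of the walls — is the same as the paper's. The moment estimate via $\langle x,f_k(t,x)\rangle=\sum_\alpha k(t,\alpha)$, Gr\"onwall, and BDG is also what the paper does, and your identification of the harmonicity cancellation as the key mechanism is correct.

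There is, however, a genuine gap in the non-attainment step. You argue that $\bE[Y(t\wedge\tau_n)]$ is bounded uniformly in $n$, where $Y(t)=-\sum_{\alpha\in R_+}\log\langle\alpha,X(t)\rangle$, and then say that $Y(\tau_n)\to+\infty$ on $\{\tau<T\}$ contradicts this uniform bound. But this is not a contradiction as stated, because $Y$ is unbounded below: when $\langle\alpha,X\rangle$ is large, $Y$ can be very negative, so a large positive contribution on $\{\tau<T\}$ could in principle be offset by a large negative contribution elsewhere while the expectation remains bounded. You need control of the negative part of $Y$ before the ``uniform bound implies non-attainment'' logic closes. The paper handles exactly this by using the Lyapunov function $f(x)=|R_+||x|^2-\sum_{\alpha\in R_+}\log\langle\alpha,x\rangle$; the added quadratic term is precisely what makes $f$ bounded below (see inequality~\eqref{Lyapunov_0}), so that the bound $\bE[f(X^{\tau_\varepsilon}(t))]\le C$ can be converted to a probability bound and then combined with Borel--Cantelli. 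Your moment bound $\bE[\sup_t|X^{(n)}(t)|^p]\le C$ (which you establish separately and uniformly in $n$) would also serve to dominate the negative part of $Y$ and close the gap, but then the logical order of your proof must be reversed: prove the uniform moment bound first, then use it in the non-attainment argument, rather than presenting the moment bound as the final step. As written, the argument is incomplete.

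A smaller point: your pathwise uniqueness sentence (``inherited from the Lipschitz truncated equations on each $[0,\tau_n]$'') is serviceable but the cleaner route is the one the paper takes, namely that the exact drift $f_k$ is one-sided Lipschitz on $\bW$ while $b,\sigma$ are globally Lipschitz, so two $\bW$-valued solutions coincide by a direct Gr\"onwall estimate without any stopping.
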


We consider several classes of non-colliding particle systems which satisfy Assumption \ref{Ass_1}.

\begin{Eg}[Radial Dunkl process]\label{Example_0}
Suppose that $r=d$, $\sigma$ is the identity matrix, $b=0$ and $k(t,\cdot) = \bk$  is a multiplicity function, then the solution $X$ of SDE \eqref{SDE_1} is a standard radial Dunkl process (see for more details \cite{ChDeGaRoVoYo08, De09, GaYo05, RoVo98, Sc07}).
Indeed, there exist $r \in \bN$ and irreducible root systems $R^{i}$, $i=1,\ldots,r$, such that $R^{i} \bigcap R^{j} = \emptyset$ and  $\langle \alpha^{(i)}, \alpha^{(j)} \rangle=0$, for every $\alpha^{(i)} \in R^{i}$ and $\alpha^{(j)} \in R^{j}$, $i \neq j$, and
$R = \bigcup_{j=1}^{r} R^{j}.$
Since $\bk$ is a multiplicity function, it holds that $\bk(\alpha^{j})= \bk_{j}$ for every $\alpha^{j} \in R^{j}$.
Therefore, by \eqref{harmonic_1}, it holds that
\begin{align*}
\sum_{\alpha \in R_{+}}
\sum_{\beta \in R_{+}}
\frac{\bk(\beta) \langle \alpha,\beta \rangle}{\langle \alpha,x \rangle \langle \beta,x \rangle}
&=
\sum_{j=1}^{r}
\bk_{j}
\sum_{\alpha \in R_{+}^{j}}
\sum_{\beta \in R_{+}^{j}}
\frac{\langle \alpha,\beta \rangle}{\langle \alpha,x \rangle \langle \beta,x \rangle}
=
\sum_{j=1}^{r}
\bk_{j}
\sum_{\alpha \in R_{+}^{j}}
\frac{|\alpha|^{2}}{\langle \alpha,x \rangle^{2}}
=
\sum_{\alpha \in R_{+}}
\frac{\bk(\alpha) |\alpha|^{2}}{\langle \alpha,x \rangle^{2}}.
\end{align*}
\end{Eg}

\begin{Eg}[Time-dependent Bessel process and Cox--Ingersoll--Ross model]\label{Example_1}
Suppose that $r\geq 1$, $d=1$, $R_{+}=\{1\}$, $\sigma(t,x)=\sigma(t)$, $b(t,x)=\lambda(t) x$, and $k(t,1)=k(t)$, where $\sigma:[0,T] \to \bR^{1 \times r}$, $\lambda:[0,T] \to \bR$ and $k:[0,T] \to (0,\infty)$ are measurable functions.
In this case the corresponding Weyl chamber $\bW$ is given by $\bW=(0,\infty)$.
Then the solution of equation \eqref{SDE_1} is a time-dependent Bessel process satisfying 
\begin{align}\label{Bessel_1}
\rd X(t)
=
\sigma(t) \rd B(t)
+
\lambda(t) X(t) \rd t
+
\frac{k(t)}{X(t)} \rd t.
\end{align}
We assume that $|\overline{\sigma}(t)|^{2} \leq 2 k(t)$, $\|k\|_{\infty} <\infty$ and  $\int_{0}^{T}|\lambda(t)| \rd t<\infty$, then the coefficients $\sigma$, $b$ and $k$ satisfy Assumption \ref{Ass_1} and thus SDE \eqref{Bessel_1} has a strictly positive unique strong solution.

The equation \eqref{Bessel_1} is related to the time-dependent Cox--Ingersoll--Ross model (see \cite{BeGoMi10}).
Indeed, by using It\^o's formula, the process $Y(t):=X(t)^{2}$ satisfies the equation
\begin{align*}
\rd Y(t)
=
2\sqrt{Y(t)} \sigma(t)\rd B(t)
+
\{2k(t)+|\sigma(t)|^{2}+2\lambda(t) Y(t)\}\rd t.
\end{align*}
\end{Eg}

\begin{Eg}\label{Example_2}
Suppose that $r,d>1$, $R_{+}$ is a positive root system of type $A_{d-1}$, that is, $R_{+}=\{e_{i}-e_{j}~|~i<j\}$, $k(\cdot, \alpha)=k:[0,T] \to (0,\infty)$ is a measurable function.
%, $\sigma(t,x)=\mathrm{diag}(\sigma_{1}(x_{d}),\ldots,\sigma_{d}(x_{d}))$, $b(t,x)=(b_{1}(x_{1}),\ldots,b_{d}(x_{d}))^{\top}$.
In this case the corresponding Weyl chamber $\bW$ is given by $\bW=\{x \in \bR^{d}~|~x_{1}>x_{2}>\cdots>x_{d}\}$.
Then $X$ is a solution of the non-colliding particle system
\begin{align}\label{Type_A}
\rd X_{i}(t)
=
\sum_{j=1}^{r}\sigma_{i,j}(t,X(t)) \rd B_{j}(t)
+
b_{i}(t,X(t)) \rd t
+
\sum_{j \neq i}
\frac{k(t)}{X_{i}(t)-X_{j}(t)}
\rd t,~i=1,\ldots,d.
\end{align}
We assume that $\sigma$ and $b$ are Lipschitz continuous in the space variable, $\|\overline{\sigma}(t,\cdot)\|_{\infty}^{2} \leq 2k(t)$, and  $b_{i} \geq b_{j}$, if $i < j$.
Then the coefficients $\sigma$, $b$ and $k$ satisfy Assumption \ref{Ass_1} with $K(t)=0$ and thus SDE \eqref{Type_A} has a unique strong solution valued in the Weyl chamber $\bW$.
%Note that this fact was firstly proved in  Corollary 6.2 in \cite{GrMa14}.
\end{Eg}

\begin{Eg}%[Squared root of Wishart processes]
\label{Example_3}
Suppose that $r=d>1$, $R_{+}$ is a positive root system of type $B_{d}$, that is, $R_{+}=R_{+}^{1} \cup R_{+}^{2}$, $R_{+}^{1}:=\{e_{i}-e_{j}~|~i<j\} \cup \{e_{i}+e_{j}~|~i<j\}$, $R_{+}^{2}:= \{e_{i}~|~i=1,\ldots,d\}$,  $k(t,\alpha)=k_{1}(t) \1_{R_{+}^{1}}(\alpha)+k_{2}(t)\1_{R_{+}^{2}}(\alpha)$, $\sigma(t,x)=\sigma(t)$, $b(t,x)=\lambda(t)x$, where $k_{1},k_{2}:[0,T] \to (0,\infty)$, $\sigma(t):[0,T] \to \bR^{d\times r}$ and $\lambda:[0,T] \to \bR$ are measurable functions.
In this case the corresponding Weyl chamber $\bW$ is given by $\bW=\{x \in \bR^{d}~|~x_{1}>x_{2}>\cdots>x_{d}>0\}$.
Then $X$ is a solution of the non-colliding particle system
\begin{align}\label{Type_B}
\begin{split}
\rd X_{i}(t)
&=
\sum_{j=1}^{r} \sigma_{i,j}(t)\rd B_{j}(t)
+
\lambda(t) X_{i}(t) \rd t
\\&\quad+
\sum_{j \neq i}
\Big\{
\frac{k_{1}(t)}{X_{i}(t)-X_{j}(t)}
+
\frac{k_{1}(t)}{X_{i}(t)+X_{j}(t)}
\Big\}
\rd t
+
\frac{k_{2}(t)}{X_{i}(t)} \rd t
,~i=1,\ldots,d.
\end{split}
\end{align}

We assume that $|\overline{\sigma}(t)|^{2} \leq 2\min\{k_{1}(t),k_{2}(t)\}$, $\|k_{i}\|_{\infty}<\infty$, $i=1,2$ and $\int_{0}^{T}|\lambda(t)| \rd t<\infty$, then the coefficients $\sigma$, $b$ and $k$ satisfy Assumption \ref{Ass_1} and thus SDE \eqref{Type_B} has a unique strong solution values in the Weyl chamber $\bW$.
Indeed, we need to prove the condition (v) in Assumption \ref{Ass_1}.
Since $R_{+}^{1}$ and $R_{+}^{2}$ are positive root system,  by using \eqref{harmonic_1}, 
for any $(t,x) \in [0,T] \times \bW$, we have
\begin{align*}
&\sum_{\alpha \in R_{+}}
\sum_{\beta \in R_{+}}
\frac{k(t,\beta) \langle \alpha,\beta \rangle}{\langle \alpha,x \rangle \langle \beta,x \rangle}
\\&=
\sum_{\alpha \in R_{+}^{1}}
\sum_{\beta \in R_{+}^{1}}
\frac{k_{1}(t) \langle \alpha,\beta \rangle}{\langle \alpha,x \rangle \langle \beta,x \rangle}
+
\sum_{\alpha \in R_{+}^{2}}
\sum_{\beta \in R_{+}^{2}}
\frac{k_{2}(t) \langle \alpha,\beta \rangle}{\langle \alpha,x \rangle \langle \beta,x \rangle}
+
\sum_{\alpha \in R_{+}^{1}}
\sum_{\beta \in R_{+}^{2}}
\frac{k_{2}(t) \langle \alpha,\beta \rangle}{\langle \alpha,x \rangle \langle \beta,x \rangle}
+
\sum_{\alpha \in R_{+}^{2}}
\sum_{\beta \in R_{+}^{1}}
\frac{k_{1}(t) \langle \alpha,\beta \rangle}{\langle \alpha,x \rangle \langle \beta,x \rangle}
\\&=
\sum_{\alpha \in R_{+}^{1}}
\frac{k_{1}(t) |\alpha|^{2}}{\langle \alpha,x \rangle^{2}}
+
\sum_{\alpha \in R_{+}^{2}}
\frac{k_{2} |\alpha|^{2}}{\langle \alpha,x \rangle^{2}}
+
\sum_{\alpha \in R_{+}^{1}}
\sum_{\beta \in R_{+}^{2}}
\frac{k_{2}(t) \langle \alpha,\beta \rangle}{\langle \alpha,x \rangle \langle \beta,x \rangle}
+
\sum_{\alpha \in R_{+}^{2}}
\sum_{\beta \in R_{+}^{1}}
\frac{k_{1}(t) \langle \alpha,\beta \rangle}{\langle \alpha,x \rangle \langle \beta,x \rangle}
\\&=
\sum_{\alpha \in R_{+}}
\frac{k(t,\alpha) |\alpha|^{2}}{\langle \alpha,x \rangle^{2}}
+
\sum_{\alpha \in R_{+}^{1}}
\sum_{\beta \in R_{+}^{2}}
\frac{k_{2}(t) \langle \alpha,\beta \rangle}{\langle \alpha,x \rangle \langle \beta,x \rangle}
+
\sum_{\alpha \in R_{+}^{2}}
\sum_{\beta \in R_{+}^{1}}
\frac{k_{1}(t) \langle \alpha,\beta \rangle}{\langle \alpha,x \rangle \langle \beta,x \rangle}.
\end{align*}
By the definition of $R_{+}^{1}$ and $R_{+}^{2}$, it holds that
\begin{align*}
\sum_{\alpha \in R_{+}^{1}}
\sum_{\beta \in R_{+}^{2}}
\frac{k_{2}(t) \langle \alpha,\beta \rangle}{\langle \alpha,x \rangle \langle \beta,x \rangle}
&=
k_{2}(t)
\sum_{i<j}
\sum_{\ell=1}^{d}
\Big\{
\frac{\langle e_{i}-e_{j},e_{\ell} \rangle}{(x_{i}-x_{j})x_{\ell}}
+
\frac{\langle e_{i}+e_{j},e_{\ell} \rangle}{(x_{i}+x_{j})x_{\ell}}
\Big\}
\\&=
k_{2}(t)
\sum_{i<j}
\Big\{
\frac{1}{(x_{i}-x_{j})x_{i}}
-
\frac{1}{(x_{i}-x_{j})x_{j}}
+
\frac{1}{(x_{i}+x_{j})x_{i}}
+
\frac{1}{(x_{i}+x_{j})x_{j}}
\Big\}
\\&=
k_{2}(t)
\sum_{i<j}
\Big\{
\frac{-1}{x_{i}x_{j}}
+
\frac{1}{x_{i}x_{j}}
\Big\}
=0.
\end{align*}
By a similar caculation, we have $\sum_{\alpha \in R_{+}^{2}}\sum_{\beta \in R_{+}^{1}}\frac{k_{1}(t) \langle \alpha,\beta \rangle}{\langle \alpha,x \rangle \langle \beta,x \rangle}=0$.
Therefore, the condition (v) in Assumption \ref{Ass_1} holds.

The system \eqref{Type_B} is related to {\it Wishart processes}, (see e.g. \cite{Bru90,De07,GrMa13,GrMa14,KaTa04,KaTa11,KoOc01}).
Indeed, by using the equality
\begin{align*}
\sum_{j:j \neq i}
\left\{
\frac{1}{x_i-x_j}
+
\frac{1}{x_i+x_j}
\right\}
=
\frac{1}{x_{i}}
\sum_{j:j \neq i}
\left\{
\frac{x_i^2+x_j^2}{x_i^2-x_j^2}
\right\}
+
\frac{d-1}{x_{i}},
\end{align*}
and It\^o's formula, the stochastic process $Y_{i}(t):=X_{i}(t)^{2}$ satisfies the following equation:
\begin{align*}%\label{Wishart_SDE}
\rd Y_{i}(t)
&=
2
\sqrt{Y_{i}(t)}
\sum_{j=1}^{r}
\sigma_{i,j}(t)
\rd B_{j}(t)
+
2\lambda(t) Y_{i}(t) \rd t
\\&\quad+
\Big\{
2k_{1}(t)(d-1)
+
2k_{2}(t)
+
\sum_{j=1}^{r}\sigma^{2}_{i,j}(s)
\Big\} \rd t
+
2k_{1}(t)
\sum_{j:j \neq i}
\frac
{Y_{i}(s)+Y_{j}(s)}
{Y_{i}(s)-Y_{j}(s)}
\rd t
,~i=1,\ldots,d.
\end{align*}
%which are special cases of Wishart processes.
\end{Eg}

\subsection{Proof of Theorem \ref{Thm_1}}
We first introduce some auxiliary notations. 
For each  $\varepsilon>0$, we 
define $g_{\varepsilon}:\bR \to (0,\infty)$ by $g_{\varepsilon}(x):= ( \varepsilon \vee x)^{-1}$.
It is clear that
\begin{align}
&|g_{\varepsilon}(x)-g_{\varepsilon}(y)|
\leq \varepsilon^{-2}|x-y|,~\text{ for any } x,y \in \bR,
\label{g_1}
\\
&(x-y)(g_{\varepsilon}(x)-g_{\varepsilon}(y)) 
\leq 0,~ \text{ for any }  x,y \in \bR,
\label{g_2}
\\
&0 \leq x^{-1}-g_{\varepsilon}(x)
\leq
\varepsilon x^{-2},~ \text{ for any } x>0.
\label{g_3}
\end{align}
%Let $f_k(t,x) = \sum_{\alpha \in R_+} \frac{k(t, \alpha)}{\langle \alpha, x \rangle}\alpha.$
Note that the function $f_{k}$ satisfies the one-sided Lipschitz condition on $\bW$, that is, for any 
$(t,x,y)\in [0,T] \times \bW\times \bW$,
\begin{align*}
\langle x-y, f_{k}(t,x)-f_{k}(t,y) \rangle \leq 0.
\end{align*}
An $\varepsilon$-truncated version $f_{k,\varepsilon}:[0,T] \times \bR^{d} \to \bR^{d}$ of $f_{k}$ is given by 
\begin{align*}%\label{def_f_app}
f_{k,\varepsilon}(t,x)
:=
\sum_{\alpha \in R_{+}}
k(t,\alpha)
g_{\varepsilon}(\langle \alpha,x\rangle)
\alpha.
\end{align*}
We set
\begin{align}\label{def_L}
%L_{k}:=(|R_{+}|\sum_{\alpha \in R_{+}}\|k(\cdot, \alpha)\|_{\infty}^{2}|\alpha|^{4})^{1/2}
L_k = \sum_{\alpha \in R_+}  \|k(\cdot, \alpha)\|_{\infty} |\alpha|^2.
\end{align}
Then it follows from \eqref{g_1}, \eqref{g_2}, and \eqref{g_3} that
\begin{align}
& |f_{k,\varepsilon}(t,x)-f_{k,\varepsilon}(t,y)|
\leq
L_{k}
\varepsilon^{-2}|x-y|,~\text{ for any } t \in [0,T],~x,y \in \bR^{d},
\label{f_1}\\
&\langle x, f_{k,\varepsilon}(t,x) \rangle
\leq
\sum_{\alpha \in R_{+}} \|k(\cdot, \alpha)\|_{\infty},~\text{ for any }  (t,x) \in [0,T] \times \bR^{d},
\label{f_4}\\
&\langle x-y, f_{k,\varepsilon}(t,x)-f_{k,\varepsilon}(t,y) \rangle
\leq 0,~ \text{ for any } t \in [0,T],~x,y \in \bR^{d},
\label{f_2}\\
&|f_{k}(t,x)-f_{k,\varepsilon}(t,x)|
\leq
\varepsilon 
\sum_{\alpha \in R_{+}}
\frac{\|k(\cdot, \alpha)\|_{\infty} |\alpha|}{\langle \alpha,x \rangle^{2}},~ \text{ for any } (t,x) \in [0,T] \times \bW.
\label{f_3}
\end{align}
Under Assumption \ref{Ass_1}, by the global Lipschitz continuity of $f_{k,\varepsilon}$, 
%and Yamada--Watanabe theorem (see, Theorem 1 in \cite{YaWa71}, or Proposition 5.2.13 in \cite{KS}),
 the following SDE  has a unique strong solution for any $\varepsilon >0$,
\begin{align}\label{SDE_app}
\rd X_{\varepsilon}(t)
=
\sigma(t,X_{\varepsilon}(t))\rd B(t)
+
b(t,X_{\varepsilon}(t))\rd t
+
f_{k,\varepsilon}(t,X_{\varepsilon}(t))
\rd t,~
X_{\varepsilon}(0)=X(0)=\xi \in \bW.
\end{align}
Note that by  \eqref{f_2}, it is easy to prove that for any $p>0$, we have
\begin{align}\label{moment_0}
\sup_{\varepsilon>0}
\bE[\sup_{t \in [0,T]}|X_{\varepsilon}(t)|^{p}]
<\infty.
\end{align}

Now we are ready to prove Theorem \ref{Thm_1}.
The proof is based on a Lyapunov function, which is inspired by the proof of Lemma 4.3.3 in \cite{AGZ10} for the case of Dyson's Brownian motion.

\begin{proof}[Proof of Theorem \ref{Thm_1}]
We set the Lyapunov function $f$ by
\begin{align*}
f(x):=|R_{+}||x|^{2}-\sum_{\alpha \in R_{+}}\log \langle \alpha,x \rangle,~x \in \bW.
\end{align*}
Then we have for any $\beta \in R_{+}$
\begin{align}\label{Lyapunov_0}
-\log \langle \beta,x\rangle
\leq
f(x)
+
\sum_{\alpha \in R_{+}}|\alpha|^{2}.
\end{align}
Indeed, if $|R_{+}|=1$, then the estimate \eqref{Lyapunov_0} is trivial.
If $|R_{+}|>1$, then it follows from  Cauchy--Schwartz's inequality and the fact that $\log |\alpha| \leq  |\alpha|^2$, that
\begin{align*}
\sum_{\alpha \neq \beta}\log \langle\alpha,x \rangle
\leq
\sum_{\alpha \neq \beta}\{\log|\alpha|+\log |x|\}
\leq
\sum_{\alpha \neq \beta}\{|\alpha|^{2}+|x|^{2}\}
\leq
|R_{+}||x|^{2}+\sum_{\alpha \in R_{+}}|\alpha|^{2}.
\end{align*}
Thus we obtain \eqref{Lyapunov_0}. For each $0<\varepsilon<\min_{\alpha \in R_{+}}\langle \alpha,\xi \rangle$, we define a stopping time $\tau_{\varepsilon}$ by
\begin{align*}
\tau_{\varepsilon}
%:= \begin{cases} 
= \inf\{t \in (0,T]~|\min_{\alpha \in R_{+}} \langle \alpha,X_{\varepsilon}(t) \rangle \leq \varepsilon\}. %& \text{if } \inf_{t \in [0, T]}\min_{\alpha \in R_{+}} \langle \alpha,X_{\varepsilon}(t) \rangle \leq \varepsilon\},
%+\infty & \text{otherwise.}
%\end{cases}
\end{align*}
Then by It\^o's formula and Assumption \ref{Ass_1}(v), we have
\begin{align*}
f(X_{\varepsilon}^{\tau_{\varepsilon}}(t))
&=
f(\xi)
+
M^{\tau_{\varepsilon}}(t)
-
\int_{0}^{t \wedge \tau_{\varepsilon}}
\sum_{\alpha \in R_{+}}
\frac{\langle \alpha,b(s,X_{\varepsilon}(s))\rangle}{\langle \alpha,X_{\varepsilon}(s)\rangle}
\rd s
\\&\quad
+
|R_{+}|
\int_{0}^{t \wedge \tau_{\varepsilon}}
\Big\{
2\sum_{\alpha \in R_{+}} k(s,\alpha)
+
2\langle X_{\varepsilon}(s),b(s,X_{\varepsilon}(s))\rangle
+
|\sigma(s,X_{\varepsilon}(s))|^{2}
\Big\}
\rd s
\\&\quad
+
\int_{0}^{t \wedge \tau_{\varepsilon}}
\sum_{\alpha \in R_{+}}
\frac{1}{\langle \alpha,X_{\varepsilon}(s) \rangle^{2}}
\Big\{
\sum_{\ell=1}^{d}
\frac{\langle \alpha,\sigma_{\cdot,\ell}(s,X_{\varepsilon}(s)) \rangle^{2}}{2}
-
k(s,\alpha)|\alpha|^{2}
\Big\}
\rd s,
\end{align*}
where $M^{\tau_{\varepsilon}}$ is a stochastic integral defined by
\begin{align*}
M^{\tau_{\varepsilon}}(t)
=
\int_{0}^{t\wedge \tau_{\varepsilon}}
\Big\langle
2|R_{+}|X_{\varepsilon}(s)
-
\sum_{\alpha \in R_{+}}
\frac{1}{\langle \alpha, X_{\varepsilon}(s)\rangle}
\alpha,
\sigma(s,X_{\varepsilon}(s))
\rd B(s)
\Big\rangle.
\end{align*}
Using Cauchy--Schwartz's inequality and Assumption \ref{Ass_1} (iii), we have
\begin{align*}
\int_{0}^{t \wedge \tau_{\varepsilon}}
\sum_{\alpha \in R_{+}}
\frac{1}{\langle \alpha,X_{\varepsilon}(s) \rangle^{2}}
\sum_{\ell=1}^{d}
\frac{\langle \alpha,\sigma_{\cdot,\ell}(s,X_{\varepsilon}(s)) \rangle^{2}}{2}
\rd s
&\leq
\int_{0}^{t \wedge \tau_{\varepsilon}}
\sum_{\alpha \in R_{+}}
\frac{\overline{\sigma}(s,x)^{2}}{2}
\frac{|\alpha|^{2}}{\langle \alpha,X_{\varepsilon}(s) \rangle^{2}}
\rd s
\\&\leq
\int_{0}^{t \wedge \tau_{\varepsilon}}
\sum_{\alpha \in R_{+}}
\frac{k(s,\alpha)|\alpha|^{2}}{\langle \alpha,X_{\varepsilon}(s) \rangle^{2}}
\rd s.
\end{align*}
Therefore, by Assumption \ref{Ass_1} (i) and (iv), we have
\begin{align*}
f(X_{\varepsilon}^{\tau_{\varepsilon}}(t))
&\leq
f(\xi)
+
M^{\tau_{\varepsilon}}(t)
+
\int_{0}^{T}
K(s)
\rd s
\\&\quad+
|R_{+}|
\int_{0}^{T}
\Big\{
2\sum_{\alpha \in R_{+}} \|k(\cdot, \alpha)\|_{\infty}
+
2| X_{\varepsilon}(s)| (\|b(\cdot,0)\|_{\infty}+\|b\|_{\mathrm{Lip}}|X_{\varepsilon}(s)|)
+
\|\sigma\|_{\infty}^{2}
\Big\}
\rd s.
\end{align*}

Let $n_0 = |f(\xi)|^{1/2} + 1$. For $n \in \bN$, $ n \geq n_0$, we set $\varepsilon_{n}:=\exp(-n^{2}-\sum_{\alpha \in R_{+}}|\alpha|^{2}) \wedge \min_{\alpha \in R_{+}}\langle \alpha,\xi \rangle/2$ and define a stopping time $\rho_{n}$ by
\begin{align*}
\rho_{n}
:=
%\begin{cases}
\inf\{t \in (0,T]~|~f(X_{\varepsilon_{n}}^{\tau_{\varepsilon_{n}}}(t)) \geq n^{2}\}. 
%& \text{if } \sup_{t \in [0,T]} f(X_{\varepsilon_{n}}^{\tau_{\varepsilon_{n}}}(t)) \geq n^2,\\
%+\infty & \text{otherwise}.
%\end{cases}
\end{align*}
Then, since $M^{\tau_{\varepsilon_{n}}\wedge \rho_{n}}$ is a martingale, by using \eqref{moment_0}, there exists a positive constant $C_{T}>0$ which is independent of $\varepsilon$ and $n$ such that
\begin{align*}
n^{2} \bP(\rho_{n} < T)
&\leq
\bE[f(X^{\tau_{\varepsilon_{n}}\wedge \rho_{n}}(T))]
\leq
f(\xi)+C_T.
\end{align*}
This implies that $\sum_{n=n_0}^{\infty}\bP(\rho_{n} < T)<\infty$.
We set 
$\Omega_{0}:=\bigcup_{N \geq n_0} \bigcap_{n \geq N}\{\rho_{n} = T\}$.
Then by Borel--Cantelli lemma, we have $\bP(\Omega_{0})=1$.

For each $\omega \in \Omega_{0}$, there exists $N(\omega) \in \bN$ such that for any $n \geq n_0 \vee N(\omega)$, $\rho_{n}(\omega) = T$.
Then it holds that for any $\beta \in R_{+}$ and $t \in [0,T)$, $w\in \Omega_0$, $n \geq n_0 \vee N(w)$, 
\begin{align*}
-\log \langle \beta, X_{\varepsilon_{n}}^{\tau_{\varepsilon_{n}}}(t,\omega) \rangle
\leq
f(X_{\varepsilon_n}^{\tau_{\varepsilon_n}}(t,\omega))
+
\sum_{\alpha \in R_{+}}|\alpha|^{2}
<
n^{2}+\sum_{\alpha \in R_{+}}|\alpha|^{2}.
\end{align*}
This implies that, for any $t \in [0,T)$, $w\in \Omega_0$, $n \geq n_0 \vee N(w)$, 
\begin{align*}
\langle \beta, X_{\varepsilon_{n}}^{\tau_{\varepsilon_{n}}}(t,\omega) \rangle
>
\exp\Big(
-n^{2}-\sum_{\alpha \in R_{+}}|\alpha|^{2}
\Big)
\geq \varepsilon_{n}.
\end{align*}
Thus, for any $w\in \Omega_0$, $n \geq n_0 \vee N(w)$, we have  $\tau_{\varepsilon_{n}}(\omega)= T$.
Now for each $w \in \Omega$, we set $X(t,\omega):=\lim_{n \to \infty} X_{\varepsilon_{n}}(t, \omega) \1_{\Omega_{0}}(\omega)$. 
Since $X_{\varepsilon_n}(t,\omega) = X_{\varepsilon_m}(t,\omega)$ for any $m\geq n$ if $t \leq  \tau_{\varepsilon_n}(\omega)$, we have $X_{\varepsilon_n}(t,\omega) = X(t,\omega)$ whenever $t\leq  \tau_{\varepsilon_n}(\omega)$. 
It follows from \eqref{SDE_app} that 
\begin{align*}
 X_{\varepsilon_n}(t\wedge \tau_{\varepsilon_n})
=
\xi+ \int_0^{t\wedge \tau_{\varepsilon_n}} \sigma(s,X_{\varepsilon_n}(s))\rd B(s)
+
\int_0^{t\wedge \tau_{\varepsilon_n}} b(s,X_{\varepsilon_n}(s))\rd s
+
\int_0^{t\wedge \tau_{\varepsilon_n}} f_{k,\varepsilon_n}(s,X_{\varepsilon_n}(s))
\rd s.~
\end{align*}
This implies that 
\begin{align*}
 X(t\wedge \tau_{\varepsilon_n})
=
\xi + \int_0^{t\wedge \tau_{\varepsilon_n}} \sigma(s,X(s))\rd B(s)
+
\int_0^{t\wedge \tau_{\varepsilon_n}} b(s,X(s))\rd s
+
\int_0^{t\wedge \tau_{\varepsilon_n}} f_{k}(s,X(s))
\rd s.~
\end{align*}
Since  $\tau_{\varepsilon_n} \to T$ as $n \to \infty$, we conclude that  $X$ satisfies the following SDE
\begin{align*}
 X(t)
=
\xi + \int_0^{t} \sigma(s,X(s))\rd B(s)
+
\int_0^{t} b(s,X(s))\rd s
+
\int_0^{t} f_{k}(s,X(s))
\rd s,~ \text{a.s.}
\end{align*}
The estimate \eqref{moment_1} follows from  \eqref{moment_0} and the Fatou lemma. 

Finally, the pathwise uniqueness for equation \eqref{SDE_1} can be derived from the fact that $b$ and $\sigma$ are Lipschitz continuous, and $f_k$ is one-sided Lipschitz continuous. The argument is quite standard and we will omit it. 

%Finally, we prove the pathwise uniqueness for SDE \eqref{SDE_1}.
%Let $X$ and $Y$ be two solution of SDE \eqref{SDE_1} with $X(0)=Y(0)=\xi \in \bW$ and driven by the same Brownian motion.
%For any $\varepsilon>0$, we define the stopping times $\tau_{Z,\varepsilon}$ for $Z\in \{X,Y\}$ by
%\begin{align*}
%\tau_{Z,\varepsilon}
%:=\inf\{t>0~|~\text{there exsits }\alpha \in R_{+} \text{ such that } \langle \alpha,Z(t) %\rangle \leq \varepsilon\}.
%\end{align*}
%For any $\omega \in \Omega_{0}$ and $T>0$, there exists $N(\omega) \in \bN$, such that for any $n \geq N(\omega)$, $\rho_{n}(\omega) > T$.
%Then it holds that for any $\beta \in R_{+}$ and $t \in [0,T]$, $\langle \beta, X_{\varepsilon_{n}}^{\tau_{\varepsilon_{n}}}(t,\omega) \rangle >\varepsilon_{n}$ and thus $\tau_{\varepsilon_{n}}(\omega)>T$.
%By the pathwise uniqueness of $X_{\varepsilon}$, since $X(t,\omega)=X_{\varepsilon_{n}}(t,\omega)=Y(t,\omega)$ for $t \leq \tau_{\varepsilon_{n}}(\omega)$,
%we have %$X(t,\omega)=X^{\tau_{\varepsilon_{n}}^{1}}(t,\omega)=X_{\varepsilon}^{\tau_{\varepsilon}}(t,\omega)=Y^{\tau_{\varepsilon}}(t,\omega)=Y(t,\omega)$, for all $t \in [0,T]$.
%This implies the pathwise uniqueness.
\end{proof}

%Since $b$ and $\sigma$ are satisfies Assumption \ref{Ass_1} (ii) and the coefficient $\bW \ni x \mapsto \sum_{\alpha \in R_{+}}\frac{k(t,\alpha)}{\langle \alpha, x \rangle}\alpha$ satisfies the one-sided Lipschitz, that is, for any $x,y \in \bW$,
%\begin{align*}
%\Big\langle x-y, \sum_{\alpha \in R_{+}}\frac{k(t,\alpha)}{\langle \alpha, x \rangle}\alpha-\sum_{\alpha \in R_{+}}\frac{k(t,\alpha)}{\langle \alpha, y \rangle}\alpha \Big\rangle \leq 0.
%\end{align*}
%Hence it is easy to show that for any $p \in (0,\infty)$, it holds that
%\begin{align}\label{moment_1}
%\bE\Big[\sup_{t \in [0,T]}|X(t)|^{p}\Big]
%<\infty.
%\end{align}

\subsection{The existence of negative moments}
We will prove that the negative moments $\sup_{0\leq t \leq T}\bE[\langle \alpha, X(t) \rangle^{-p}]$ and $\bE[ \sup_{t \in [0,T]} \langle \alpha, X(t) \rangle^{-p} ]$ are finite for some $p>0$. 
This fact plays a crucial role in the numerical approximation of $X$. 

We set $p_{*}$ by
\begin{equation*}%\label{def: p*}
p_{*}
:=
\inf_{t \in [0,T],\alpha \in R_{+}}
\frac{2k(t,\alpha)}{\|\overline{\sigma}(t,\cdot)\|_{\infty}^{2}}-1.
\end{equation*}
Then by Assumption \ref{Ass_1} (iii), $p_{*} \geq 0$. In case $p_{*}=+\infty$, we set $p_{*}$ to be any constant which is greater than $6$. 

%Now we prove that the negative moments $\sup_{0\leq t \leq T}\bE[\langle \alpha, X(t) \rangle^{-p}]$ and $\bE[\sup_{0\leq t \leq T} \langle \alpha, X^{\nu,b}(t) \rangle^{-p}]$ are finite for some $p>0$.

\begin{Thm}\label{Thm_negative}
Suppose that Assumptions \ref{Ass_1} holds.
If $p_{*}>0$, then for any $p \in (0,p_{*})$, it holds that
\begin{equation} \label{negative_0}
\sup_{\alpha \in R_{+}}
\sup_{t \in [0,T]}
\bE[\langle \alpha, X(t) \rangle^{-p}]
<\infty.
\end{equation}
Moreover, if $p_{*}>2$, then for any $p \in (0,p_{*}-2)$, it holds that
\begin{equation} \label{negative_1}
\sup_{\alpha \in R_{+}}
\bE
\Big[
\sup_{t \in [0,T]}
\langle \alpha, X(t) \rangle^{-p}
\Big]
<\infty.
\end{equation}
\end{Thm}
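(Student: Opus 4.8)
I would derive \eqref{negative_0} from an energy estimate for a carefully chosen Lyapunov functional, and then obtain \eqref{negative_1} from \eqref{negative_0} together with the Burkholder--Davis--Gundy inequality. As in the proof of Theorem \ref{Thm_1}, all computations are carried out first for the stopped truncated processes $X_{\varepsilon}^{\tau_{\varepsilon}}$: on $[0,\tau_{\varepsilon}]$ one has $\langle\gamma,X_{\varepsilon}\rangle>\varepsilon$ for every $\gamma\in R_{+}$, so $f_{k,\varepsilon}=f_{k}$ there and $X_{\varepsilon}$ solves \eqref{SDE_1} up to $\tau_{\varepsilon}$, while every quantity below stays bounded. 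The resulting bounds are uniform in $\varepsilon$, hence pass to $X$ via the a.s. convergence $X_{\varepsilon_{n}}\to X$ (and $\tau_{\varepsilon_{n}}\to T$) established there, plus Fatou's lemma.

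\textbf{Step 1: the bound \eqref{negative_0}.} Fix $\beta\in R_{+}$ and $p\in(0,p_{*})$, choose $p'\in(p,p_{*})$ and a small $\epsilon>0$ (to be fixed), and apply It\^o's formula to $\Phi(X_{\varepsilon}^{\tau_{\varepsilon}}(t))$ with $\Phi(x):=\langle\beta,x\rangle^{-p'}\prod_{\gamma\in R_{+}}\langle\gamma,x\rangle^{-\epsilon}$; equivalently, work with $h(x):=-p'\log\langle\beta,x\rangle-\epsilon\sum_{\gamma\in R_{+}}\log\langle\gamma,x\rangle$ and $\Phi=e^{h}$. The drift of $\Phi(X_{\varepsilon})$ equals $\Phi(X_{\varepsilon})$ times a sum that I group as follows. \emph{(a) Self-repulsion and quadratic variation.} The terms proportional to $\langle\beta,X_{\varepsilon}\rangle^{-2}$ combine to a coefficient that is $<0$ once $\epsilon$ is small, because $p'<p_{*}$ and $2k(t,\beta)|\beta|^{2}\ge(p_{*}+1)|\sigma(t,X_{\varepsilon})^{\top}\beta|^{2}$ (the Bessel-type condition; and trivially $<0$ when $\sigma(t,X_{\varepsilon})=0$, since $k>0$); moreover, by Assumption \ref{Ass_1}(v) all cross-terms produced by the \emph{symmetric} factor $\prod_{\gamma}\langle\gamma,\cdot\rangle^{-\epsilon}$ collapse, exactly as in \eqref{harmonic_1}, to a single strictly negative reservoir $-c\,\epsilon\sum_{\gamma\in R_{+}}k(t,\gamma)|\gamma|^{2}\langle\gamma,X_{\varepsilon}\rangle^{-2}$ with $c=c(p_{*},|R_{+}|)>0$. \emph{(b) Drift in $b$.} By Assumption \ref{Ass_1}(i) and, decisively, (iv), the $b$-contribution is $\le C(1+K(t))\Phi(X_{\varepsilon})$. \emph{(c) The dangerous cross-terms} $-p'\langle\beta,X_{\varepsilon}\rangle^{-p'-1}\sum_{\gamma\neq\beta}k(t,\gamma)\langle\beta,\gamma\rangle\langle\gamma,X_{\varepsilon}\rangle^{-1}$, which are positive exactly for the $\gamma$ with $\langle\beta,\gamma\rangle<0$. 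Splitting $\langle\beta,x\rangle^{-1}\langle\gamma,x\rangle^{-1}\le\tfrac12(\mu_{\gamma}\langle\beta,x\rangle^{-2}+\mu_{\gamma}^{-1}\langle\gamma,x\rangle^{-2})$ and choosing the weights $\mu_{\gamma}$, then $\epsilon$, so that these positive contributions fall under the negative coefficients in (a) — using that the reservoir in (a) carries the factor $k(t,\gamma)$, which cancels the $k(t,\gamma)$ here — this group is absorbed. Altogether the drift is $\le C(1+K(t))\Phi(X_{\varepsilon})$; taking expectations (the stochastic integral up to $\tau_{\varepsilon}$ is a true martingale) and using Gronwall's lemma with $\int_{0}^{T}K<\infty$ gives $\sup_{t\in[0,T]}\bE[\Phi(X_{\varepsilon}^{\tau_{\varepsilon}}(t))]\le C\Phi(\xi)$, uniformly in $\varepsilon$. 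Passing to the limit and using $\langle\beta,x\rangle^{-p}\le\Phi(x)^{p/p'}\big(\prod_{\gamma}\langle\gamma,x\rangle\big)^{\epsilon p/p'}\le C\,\Phi(x)^{p/p'}|x|^{\epsilon p|R_{+}|/p'}$, together with H\"older's inequality and the polynomial moment bound \eqref{moment_1}, yields \eqref{negative_0}.

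\textbf{Step 2: the bound \eqref{negative_1}.} Let $p\in(0,p_{*}-2)$. Now \eqref{negative_0} is available: by H\"older, any product $\langle\beta,x\rangle^{-a}\langle\gamma,x\rangle^{-b}$ with $a+b\le p+2<p_{*}$ has finite, $t$-uniform expectation. Apply It\^o's formula to $\langle\beta,X(t)\rangle^{-p}$, writing it as $\langle\beta,\xi\rangle^{-p}$ plus a finite-variation part whose drift is a sum of such products (controlled exactly as in (a)--(c), with the dangerous cross-terms now handled crudely by Young's inequality since negative moments of order $p+2<p_{*}$ are finite), plus the stochastic integral $-p\int_{0}^{t}\langle\beta,X(s)\rangle^{-p-1}\sigma(s,X(s))^{\top}\beta\,\rd B(s)$. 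Take $\sup_{s\le t}$ and then expectations: the finite-variation part is $\le\langle\beta,\xi\rangle^{-p}+C+C\int_{0}^{T}K(s)\bE[\langle\beta,X(s)\rangle^{-p}]\rd s<\infty$, and Burkholder--Davis--Gundy bounds the martingale part by $C\,\bE\big[\big(\int_{0}^{t}\langle\beta,X(s)\rangle^{-2p-2}|\sigma(s,X(s))^{\top}\beta|^{2}\rd s\big)^{1/2}\big]\le C\,\bE\big[\big(\sup_{s\le t}\langle\beta,X(s)\rangle^{-p}\big)^{1/2}\big(\int_{0}^{t}\langle\beta,X(s)\rangle^{-p-2}(1+|X(s)|^{2})\rd s\big)^{1/2}\big]$. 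Young's inequality absorbs $\tfrac12\bE[\sup_{s\le t}\langle\beta,X(s)\rangle^{-p}]$ into the left-hand side, leaving $C\int_{0}^{T}\bE[\langle\beta,X(s)\rangle^{-p-2}(1+|X(s)|^{2})]\rd s$, which is finite precisely because $p+2<p_{*}$ (by \eqref{negative_0} and \eqref{moment_1}). This is where the loss of two in the exponent is used.

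\textbf{Expected main obstacle.} The heart of the matter is the interaction of (a) and (c) in Step 1: showing that the singular cross-terms $k(t,\gamma)\langle\beta,\gamma\rangle\langle\gamma,X\rangle^{-1}$ with $\langle\beta,\gamma\rangle<0$ are genuinely dominated by the self-repulsion. Assumption \ref{Ass_1}(v) (which turns the symmetric auxiliary factor into a clean negative reservoir), the geometry of $R_{+}$ for the pairs with $\langle\beta,\gamma\rangle<0$, and a delicate, possibly $(p_{*}-p)$-dependent, choice of the weights $\mu_{\gamma}$ and of the exponent $\epsilon$, are what make the absorption go through; keeping all the constants under control is the technically demanding part and is the reason the statement is phrased with the strict inequalities $p<p_{*}$ and $p<p_{*}-2$.
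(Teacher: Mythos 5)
Your Step 2 (the $\sup$-in-$t$ bound \eqref{negative_1}) is essentially the paper's argument: apply It\^o to $\langle\beta,X\rangle^{-p}$, control the drift terms using \eqref{negative_0} and \eqref{moment_1} since all exponents stay strictly below $p_*$, handle the martingale by BDG and absorb half of the sup by Young. The loss of $2$ in the exponent arises exactly as you say.

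Step 1 is where you diverge from the paper, and it is where the proposal has a genuine gap. You put the full asymmetric weight $p'$ on the single root $\beta$ (plus a tiny uniform weight $\epsilon$), compute the drift of $\Phi=e^h$, and try to absorb the ``dangerous cross-terms'' $-p'\,k(t,\alpha)\langle\beta,\alpha\rangle\langle\beta,x\rangle^{-1}\langle\alpha,x\rangle^{-1}$ (for $\alpha\neq\beta$ with $\langle\beta,\alpha\rangle<0$) by a Young split with weights $\mu_\alpha$, feeding one half into the $\langle\beta,x\rangle^{-2}$ self-repulsion margin and the other half into the $\epsilon$-reservoir. This does not close. To absorb $p'\,k(\alpha)|\langle\beta,\alpha\rangle|/(2\mu_\alpha)\langle\alpha,x\rangle^{-2}$ into the reservoir $\epsilon\,k(\alpha)|\alpha|^{2}\langle\alpha,x\rangle^{-2}$ you must take $\mu_\alpha\gtrsim p'|\langle\beta,\alpha\rangle|/(\epsilon|\alpha|^{2})$, which forces the $\langle\beta,x\rangle^{-2}$ side of the split to contribute of order $p'^{2}\epsilon^{-1}\sum_\alpha k(\alpha)|\langle\beta,\alpha\rangle|^{2}/|\alpha|^{2}$, so $\epsilon$ must be \emph{large}. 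At the same time, the $\langle\beta,x\rangle^{-2}$ self-coefficient (including the $\nabla h\nabla h^{\top}$ and Hessian contributions) is $-(p'+\epsilon)\bigl(k(t,\beta)|\beta|^{2}-\tfrac{(p'+\epsilon)+1}{2}|\sigma^{\top}\beta|^{2}\bigr)$, which is negative only when $p'+\epsilon<p_*$, forcing $\epsilon$ to be \emph{small}, and its magnitude is $\propto (p_*-p'-\epsilon)$, which shrinks as $p'\uparrow p_*$. The lower bound required on $\epsilon$ is roughly $p'/(p_*-p')$ times a root-system constant, while the upper bound is $p_*-p'$, and these are incompatible whenever $p'$ is close to $p_*$ or whenever $\sum_{\alpha:\langle\beta,\alpha\rangle<0}k(\alpha)|\langle\beta,\alpha\rangle|^{2}/|\alpha|^{2}$ is sizeable (already in type $A_{d-1}$ this sum grows linearly in $d$). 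So the absorption argument, as proposed, fails in general; it is not merely a matter of ``keeping the constants under control.''

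The paper avoids the problem by never leaving the symmetric situation: it applies It\^o to $\log\prod_{\beta\in R_+}\langle\beta,X\rangle^{-p_*}$ with \emph{uniform} weights, so Assumption \ref{Ass_1}(v) makes the Dunkl cross-terms cancel exactly — no Young split, no $\mu_\alpha$. It then observes that the remaining drift $H(t)$ dominates half the quadratic variation of the martingale $M(t)$, $\langle M\rangle\le 2H$, whence $\exp(-M-H)\le\exp(-M-\langle M\rangle/2)$ is a supermartingale and $\bE[\prod_{\beta}\langle\beta,X^{\tau_\varepsilon}(t)\rangle^{-p_*}]$ is bounded uniformly in $\varepsilon$ and $t$. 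This is an exponential-martingale bound, not a Gronwall one; the $\int_0^T K$ from Assumption \ref{Ass_1}(iv) just enters multiplicatively. The single-root moment $\bE[\langle\alpha,X\rangle^{-p}]$ for $p<p_*$ is then extracted by H\"older, trading the extra factor $\prod_{\beta\neq\alpha}\langle\beta,X\rangle^{p}$ against the polynomial moment bound \eqref{moment_1}. If you want to keep your Lyapunov framing, switch to uniform weights so (v) does the cancellation for you, and recover the individual moment at the end by H\"older exactly as in the paper.
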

\begin{proof}
We first prove \eqref{negative_0}.
For $0 < \varepsilon<\min_{\alpha \in R_{+}}\langle \alpha,\xi \rangle$, we define a stopping time 
%$\tau_{\varepsilon}$ 
%by: $\tau_\varepsilon(w) = +\infty$ if $\inf_{t \in [0,T]} \min_{\alpha \in R_+} \langle \alpha, X(t) \rangle > \varepsilon$, and 
%\begin{align*}
$$\tau_{\varepsilon}(w) 
:=\inf\{t \in (0,T]~|~\text{there exsits }\alpha \in R_{+} \text{ such that } \langle \alpha,X(t) \rangle \leq \varepsilon\}.$$
%otherwise. 
%\end{align*}
By using It\^o's formula, we have for any $\beta \in R_{+}$,
\begin{align*}
\log \langle \beta, X^{\tau_{\varepsilon}}(t) \rangle
&=
\log \langle \beta, \xi \rangle
+
\int_{0}^{t \wedge \tau_{\varepsilon}}
\frac{\langle \beta,\sigma(s,X(s)) \rd B(s) \rangle}{\langle \beta,X(s) \rangle}
+
\int_{0}^{t \wedge \tau_{\varepsilon}}
\frac{\langle \beta,b(s,X(s)) \rangle}{\langle \beta,X(s) \rangle}
\rd s
\\&\quad+
\sum_{\alpha \in R_{+}}
\int_{0}^{t \wedge \tau_{\varepsilon}}
\frac{k(s,\alpha)\langle \alpha, \beta \rangle}{\langle \alpha,X(s) \rangle \langle \beta,X(s) \rangle}
\rd s
-\frac{1}{2}
\int_{0}^{t \wedge \tau_{\varepsilon}}
\frac{1}{\langle \beta,X(s) \rangle^{2}}
\sum_{j=1}^{r}
\langle \beta,\sigma_{\cdot,j}(s,X(s)) \rangle^{2}
\rd s.
\end{align*}
By Assumption \ref{Ass_1} (iv) and (v), we have
\begin{align} \label{eqn19,5}
\log\Big(\prod_{\beta \in R_{+}} \langle \beta, X^{\tau_{\varepsilon}}(t)\rangle^{-p_{*}}\Big)
&\leq
\log\Big(\prod_{\beta \in R_{+}} \langle \beta, \xi\rangle^{-p_{*}}\Big)
-M(t)
-H(t)
+p_{*}\int_{0}^{T}K(s) \rd s,
\end{align}
where we define stochastic processes $M$ and $H$ by for $t \in [0,T]$,
\begin{align*}
M(t)&:=
p_{*}
\sum_{\beta \in R_{+}}
\int_{0}^{t \wedge \tau_{\varepsilon}}
\frac{\langle \beta,\sigma(s,X(s)) \rd B(s) \rangle}{\langle \beta,X(s) \rangle},\\
H(t)&:=
p_{*}
\int_{0}^{t \wedge \tau_{\varepsilon}}
\sum_{\beta \in R_{+}}
\frac{1}{\langle \beta,X(s) \rangle^{2}}
\Big\{
k(s,\beta)|\beta|^{2}
-
\frac{1}{2}
\sum_{j=1}^{r}
\langle \beta,\sigma_{\cdot,j}(s,X(s)) \rangle^{2}
\Big\}
\rd s.
\end{align*}

Now we prove that $\langle M\rangle(t) \leq 2 H(t)$ for any $t \in [0,T]$.
Note that by the definition of $M$ its quadratic variation $\langle M\rangle$ is given by
\begin{align*}
\langle M\rangle(t)
=
p_{*}^{2}
\int_{0}^{t \wedge \tau_{\varepsilon}}
\sum_{j=1}^{r}
\Big\langle 
\sum_{\beta \in R_{+}}
\frac{1}{\langle \beta,X(s) \rangle}
\beta,
\sigma_{\cdot,j}(s,X(s))
\Big\rangle^{2}
\rd s.
\end{align*}
Hence it is sufficient to prove that for any $(t,x) \in [0,T] \times \bW$,
\begin{align}\label{negative_2}
\mathcal{S}:= p_{*}\sum_{j=1}^{r}
\Big\langle 
\sum_{\beta \in R_{+}}
\frac{1}{\langle \beta,x \rangle}
\beta,
\sigma_{\cdot,j}(t,x))
\Big\rangle^{2}
+
\sum_{j=1}^{r}
\sum_{\beta \in R_{+}}
\frac{\langle \beta,\sigma_{\cdot,j}(t,x) \rangle^{2}}{\langle \beta,x \rangle^{2}}
\leq
\sum_{\beta \in R_{+}}
\frac{2k(t,\beta)|\beta|^{2}}{\langle \beta,x \rangle^{2}}.
\end{align}

When  $\sigma$ is a diagonal square matrix and $\bar{\sigma}(t,x)= \max_{i=1,\ldots, d}| \sigma_{i,i}(t,x)|$, 
then thanks to Assumption \ref{Ass_1} (iii) and \eqref{harmonic_1}, it holds that for any $x \in \bW$,
\begin{align*}
\mathcal{S}&\leq
p_{*}
\bar{\sigma}(t,x)^{2}
\Big\{
\sum_{j=1}^{r}
\Big(
\sum_{\beta \in R_{+}}
\frac{1}{\langle \beta,x \rangle}
\beta_{j}
\Big)^{2}
+
\sum_{\beta \in R_{+}}
\frac{|\beta|^{2}}{\langle \beta,x \rangle^{2}}
\Big\}
\\&=
\sum_{\beta \in R_{+}}
\frac{(p_{*}+1)\overline{\sigma}(t,x)|\beta|^{2}}{\langle \beta,x \rangle^{2}}
\leq
\sum_{\beta \in R_{+}}
\frac{2k(t,\beta)|\beta|^{2}}{\langle \beta,x \rangle^{2}}.
\end{align*}

On the other hand, if $\sigma$ is not a diagonal square matrix, $\bar{\sigma}(t,x) = |\sigma(t,x)|$. 
By Schwarz's inequality, Assumption \ref{Ass_1} (iii) and \eqref{harmonic_1}, it holds that for any $x \in \bW$,
\begin{align*}
\mathcal{S}&\leq
p_{*}
\Big|\sum_{\beta \in R_{+}}
\frac{1}{\langle \beta,x \rangle}
\beta
\Big|^{2}
|\sigma(t,x)|^{2}
+
\sum_{\beta \in R_{+}}
\frac{|\beta|^{2}}{\langle \beta,x \rangle^{2}}
|\sigma(t,x)|^{2}
\\&=
\sum_{\beta \in R_{+}}
\frac{(p_{*}+1)|\sigma(t,x)|^{2}|\beta|^{2}}{\langle \beta,x \rangle^{2}}
\leq
\sum_{\beta \in R_{+}}
\frac{2k(t,\beta)|\beta|^{2}}{\langle \beta,x \rangle^{2}}.
\end{align*}

This proves \eqref{negative_2}, and thus $\langle M\rangle(t)  \leq 2 H(t)$ for any $t \in [0,T]$.

By the definition of the stopping time $\tau_{\varepsilon}$,  $(\exp(-M(t)-\langle M\rangle(t)/2))_{t \in [0,T]}$ is a martingale. Thus, thanks to \eqref{eqn19,5}, we have
\begin{align*}
&\bE\Big[\prod_{\beta \in R_{+}} \langle \beta, X^{\tau_{\varepsilon}}(t) \rangle^{-p_{*}}\Big]
\\&\leq
\prod_{\beta \in R_{+}} \langle \beta, \xi \rangle^{-p_{*}}
\exp\Big(
p_{*}
\int_{0}^{T}
K(s)
\rd s
\Big)
\bE[\exp(-M(t)-\langle M\rangle(t)/2)\exp(\langle M\rangle(t)/2-H(t))]
\\&\leq
\prod_{\beta \in R_{+}} \langle \beta, \xi \rangle^{-p_{*}}
\exp\Big(
p_{*}
\int_{0}^{T}
K(s)
\rd s
\Big)
\bE[\exp(-M(t)-\langle M\rangle(t)/2)]
\\&=
\prod_{\beta \in R_{+}} \langle \beta, \xi \rangle^{-p_{*}}
\exp\Big(
p_{*}
\int_{0}^{T}
K(s)
\rd s
\Big).
\end{align*}
Hence by Fatou's lemma, we have
\begin{align*}
\sup_{t \in [0,T]}
\bE\Big[\prod_{\beta \in R_{+}} \langle \beta, X(t) \rangle^{-p_{*}}\Big]
\leq
\prod_{\beta \in R_{+}} \langle \beta, \xi \rangle^{-p_{*}}
\exp\Big(
p_{*}
\int_{0}^{T}
K(s)
\rd s
\Big).
\end{align*}
Finally, by using H\"older's inequality and \eqref{moment_1}, for any $p \in (0,p_{*})$, it holds that,
\begin{align*}
&
\sup_{\alpha \in R_{+}}
\sup_{t \in [0,T]}
\bE\Big[
\langle \alpha, X(t) \rangle^{-p}
\Big]
=
\sup_{\alpha \in R_{+}}
\sup_{t \in [0,T]}
\bE\Big[
\prod_{\beta \in R_{+}}
\langle \beta, X(t) \rangle^{-p}
\prod_{\beta \neq \alpha}
\langle \beta, X(t) \rangle^{p}
\Big]
\\&\leq
\sup_{\alpha \in R_{+}}
\sup_{t \in [0,T]}
\bE\Big[
\prod_{\beta \in R_{+}}
\langle \beta, X(t) \rangle^{-p_{*}}
\Big]^{p/p_{*}}
\bE\Big[
\prod_{\beta \neq \alpha}
(|\beta|| X(t)|)^{\frac{p p_{*}}{p_{*}-p}}
\Big]^{(p_{*}-p)/p_{*}}
<\infty.
\end{align*}
This concludes the proof of \eqref{negative_0}.

Now we prove \eqref{negative_1}.
Let $X_{\alpha}(t)=\langle \alpha,X(t) \rangle$ and $Y_{\alpha}(t)=X_{\alpha}(t)^{-1}$, $t \in [0,T]$ and $\alpha \in R_{+}$.
Then $X_{\alpha}$ satisfies the following equation
\begin{align*}
X_{\alpha}(t)
=
\langle \alpha, \xi \rangle
+
\int_{0}^{t}
\langle \alpha, \sigma(s,X(s)) \rd B(s) \rangle
+
\int_{0}^{t}
\langle \alpha, b(s,X(s)) \rangle
\rd s
+
\sum_{\beta \in R_{+}}
\int_{0}^{t}
\frac{k(s,\beta)\langle \alpha,\beta \rangle}{X_{\beta}(s)}
\rd s.
\end{align*}
By using It\^o's formula, we have
\begin{align*}
Y_{\alpha}(t)^{p}
&=
\langle \alpha, \xi \rangle^{-p}
-p
\int_{0}^{t}
Y_{\alpha}(s)^{p+1}
\langle \alpha, \sigma(s,X(s))\rd B(s) \rangle
-p
\int_{0}^{t}
Y_{\alpha}(s)^{p+1}
\langle \alpha, b(s,X(s)) \rangle
\rd s
\\&\quad
-p
\sum_{\beta \in R_{+}}
\int_{0}^{t}
k(s,\beta)
\langle \alpha,\beta \rangle
Y_{\alpha}(s)^{p+1}
Y_{\beta}(s)
\rd s
+
\frac{p(p+1)}{2}
\sum_{j=1}^{r}
\int_{0}^{t}
Y_{\alpha}(s)^{p+2}
\langle \alpha,\sigma_{\cdot,j}(s,X(s)) \rangle^{2}
\rd s.
\end{align*}
Let $p \in (0,p_{*}-2)$.
Note that by Assumption \ref{Ass_1}, $\sigma$ and $k$ are bounded, and $b$ is of linear growth.
Hence, by using Burkholder-Davis-Gundy's inequality, Schwarz's inequality, and Young's inequality,
%$ab\leq (2/3)a^{3/2}+(1/3)b^{3}$,
there exist positive constants  $C_{1}(p), C_2(p)$ such that
\begin{align*}
&\bE\Big[
\sup_{0 \leq t \leq T}
Y_{\alpha}^{\tau_{\varepsilon}}(t)^{p}
\Big]
\\&\leq
C_1(p)
\Big\{
\langle \alpha,\xi \rangle^{-p}
+
\bE\Big[
\Big(
\int_{0}^{T \wedge \tau_{\varepsilon}}
Y_{\alpha}(s)^{2p+2}
\rd s
\Big)^{1/2}
\Big]
+
\sum_{\beta \in R_{+}}
\int_{0}^{T}
\bE[Y_{\alpha}(s)^{p+1}Y_{\beta}(s)]
\rd s
\\&\hspace{6.9cm}\quad
+
\int_{0}^{T}
\bE[Y_{\alpha}(s)^{p+1}(1+|X(s)|^{p})+Y_{\alpha}(s)^{p+2}]
\rd s
\Big\}
\\&\leq
C_2(p)
\Big\{
\langle \alpha,\xi \rangle^{-p}
+
\bE\Big[
\sup_{0 \leq t \leq T}
Y_{\alpha}^{\tau_{\varepsilon}}(t)^{p/2}
\Big(
\int_{0}^{T \wedge \tau_{\varepsilon}}
Y_{\alpha}(s)^{p+2}
\rd s
\Big)^{1/2}
\Big]
+
\sum_{\beta \in R_{+}}
\int_{0}^{T}
\bE[Y_{\beta}(s)^{p+2}]
\rd s
\\&\hspace{5cm}\quad
+
\int_{0}^{T}
\bE[Y_{\alpha}(s)^{p+1}+Y_{\alpha}(s)^{p+2}+|X(s)|^{p(p+2)}]
\rd s
\Big\}.
\end{align*}
By using Young's inequality, it holds that
\begin{align*}
C_2(p)\bE\Big[
\sup_{0 \leq t \leq T}
Y_{\alpha}^{\tau_{\varepsilon}}(t)^{p/2}
\Big(
\int_{0}^{T \wedge \tau_{\varepsilon}}
Y_{\alpha}(s)^{p+2}
\rd s
\Big)^{1/2}
\Big]
%&\leq
%C_{p}
%\bE\Big[
%\sup_{0 \leq t \leq T}
%Y_{\alpha}^{\tau_{\varepsilon}}(t)^{p}
%\Big]^{1/2}
%\bE\Big[
%\int_{0}^{T}
%Y_{\alpha}(s)^{p+2}
%\rd s
%\Big]^{1/2}
%\\&
\leq
\frac{1}{2}
\bE\Big[
\sup_{0 \leq t \leq T}
Y_{\alpha}^{\tau_{\varepsilon}}(t)^{p}
\Big]
+
\frac{C_2(p)^2}{2}
\bE\Big[
\int_{0}^{T}
Y_{\alpha}(s)^{p+2}
\rd s
\Big].
\end{align*}
Hence, by using \eqref{moment_1} and \eqref{negative_0}, there exist a positive constant  $C_3(p)$ such that
\begin{align*}
\bE\Big[
\sup_{0 \leq t \leq T}
Y_{\alpha}^{\tau_{\varepsilon}}(t)^{p}
\Big]
&\leq
C_3(p)
\Big\{
\langle \alpha,\xi \rangle^{-p}
+
\sum_{\beta \in R_{+}}
\int_{0}^{T}
\bE[Y_{\beta}(s)^{p+2}]
\rd s
\\&\hspace{2cm}\quad+
\int_{0}^{T}
\bE[Y_{\alpha}(s)^{p+1}+Y_{\alpha}(s)^{p+2}+|X(t)|^{p(p+2)}]
\rd s
\Big\}.
\end{align*}
By Fatou's lemma, we have $\sup_{\alpha \in \bR_{+}}\bE[\sup_{0 \leq t \leq T}Y_{\alpha}(t)^{p}]<\infty$.
This concludes the proof of \eqref{negative_1}.
\end{proof}

As an application of the estimate of the negative moment, we get the following Kolmogorov-type condition.

\begin{Lem}\label{Lem_Kol_0}
Suppose that Assumptions \ref{Ass_1} holds and $p_{*}>1$, then for any $p \in [1,p_{*})$, there exists $C_{p}>0$ such that for any $t,s \in [0,T]$,
\begin{align*}
\bE[|X(t)-X(s)|^{p}]
\leq
C_{p}
|t-s|^{p/2}.
\end{align*}
\end{Lem}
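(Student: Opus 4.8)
The plan is to estimate $\bE[|X(t)-X(s)|^p]$ directly from the integral form of SDE \eqref{SDE_1}. Without loss of generality assume $s<t$. Writing
\begin{align*}
X(t)-X(s)
=
\int_s^t \sigma(u,X(u))\rd B(u)
+
\int_s^t b(u,X(u))\rd u
+
\int_s^t f_k(u,X(u))\rd u,
\end{align*}
I would split $|X(t)-X(s)|^p \leq 3^{p-1}(I_1 + I_2 + I_3)$ where $I_1,I_2,I_3$ are the $p$-th powers of the three terms, and bound each expectation separately.

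For the stochastic integral $I_1$, apply the Burkholder--Davis--Gundy inequality followed by Jensen (or Hölder) in time to get
\begin{align*}
\bE\Big[\Big|\int_s^t \sigma(u,X(u))\rd B(u)\Big|^p\Big]
\leq
C_p\, |t-s|^{p/2-1}\int_s^t \bE[|\sigma(u,X(u))|^p]\rd u,
\end{align*}
and since $\sigma$ is Lipschitz with $\|\sigma(\cdot,0)\|_\infty<\infty$ (this follows from Assumption \ref{Ass_1}\ref{Ass_1: ii} together with boundedness considerations; more directly, $|\sigma(u,x)| \leq \|\sigma\|_{\mathrm{Lip}}|x| + |\sigma(u,0)|$ and $\overline{\sigma}$ is controlled by $k$ which is bounded, so $\sigma(u,0)$ is bounded), estimate \eqref{moment_1} of Theorem \ref{Thm_1} gives $\sup_{u}\bE[|\sigma(u,X(u))|^p]<\infty$. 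This yields $\bE[I_1]\leq C_p|t-s|^{p/2}$. For the drift term $I_2$, Jensen in time gives $I_2 \leq |t-s|^{p-1}\int_s^t |b(u,X(u))|^p\rd u$, and by Assumption \ref{Ass_1}\ref{Ass_1: i} plus \eqref{moment_1}, $\bE[I_2]\leq C_p|t-s|^p \leq C_p T^{p/2}|t-s|^{p/2}$.

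The main obstacle is the singular term $I_3$. Here I would again use Jensen in time, $I_3 \leq |t-s|^{p-1}\int_s^t |f_k(u,X(u))|^p\rd u$, and then bound $|f_k(u,x)| \leq \sum_{\alpha\in R_+}\|k(\cdot,\alpha)\|_\infty |\alpha| \langle\alpha,x\rangle^{-1}$, so by the power-mean inequality
\begin{align*}
\bE[|f_k(u,X(u))|^p]
\leq
C_p \sum_{\alpha\in R_+}\|k(\cdot,\alpha)\|_\infty^p|\alpha|^p\, \bE[\langle\alpha,X(u)\rangle^{-p}].
\end{align*}
This is exactly where the hypothesis $p_*>1$ and $p\in[1,p_*)$ enters: Theorem \ref{Thm_negative}, estimate \eqref{negative_0}, guarantees $\sup_{\alpha\in R_+}\sup_{u\in[0,T]}\bE[\langle\alpha,X(u)\rangle^{-p}]<\infty$ precisely for $p<p_*$, so $\sup_u\bE[|f_k(u,X(u))|^p]<\infty$ and hence $\bE[I_3]\leq C_p|t-s|^p\leq C_pT^{p/2}|t-s|^{p/2}$. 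Combining the three bounds gives the claim with $C_p$ depending on $p,T$, the Lipschitz constants, $L_k$, the initial condition moments, and the negative-moment bound. The only delicate points are checking that $\sigma(\cdot,0)$ and $b(\cdot,0)$ are bounded (the former via Assumption \ref{Ass_1}\ref{bounded k}, the latter is assumed directly) so that the positive moments in \eqref{moment_1} suffice, and keeping track that $p\geq 1$ is used only to make Jensen's inequality in time go in the right direction — for $p<1$ one would instead use subadditivity of $x\mapsto x^p$, but the statement restricts to $p\geq 1$.
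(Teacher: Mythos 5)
Your proof is correct and follows essentially the same route as the paper: write $X(t)-X(s)$ in integral form, apply Burkholder--Davis--Gundy to the martingale part, apply Jensen in time to the drift terms, and then invoke the positive-moment bound \eqref{moment_1} together with the negative-moment bound \eqref{negative_0} (which is where $p<p_*$ enters) to control the integrands. The only cosmetic difference is that for the diffusion term the paper uses the uniform boundedness of $\sigma$ (a direct consequence of Assumption \ref{Ass_1}\ref{bounded k}) rather than linear growth plus \eqref{moment_1}, but this changes nothing substantive.
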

\begin{proof}
Then since $\sigma$ and $k$ are bounded, and $b$ is of linear growth, by using Burkholder--Davis--Gundy's inequality, \eqref{moment_0} and \eqref{negative_0}, there exits $C_{p>0}$ such that for any $t,s \in [0,T]$ with $s<t$,
\begin{align*}
&\bE[|X(t)-X(s)|^{p}]\\
&\leq
C_{p}
\Big\{
\bE\Big[
\Big(
\int_{s}^{t}
|\sigma(u,X(u))|^{2}
\rd s
\Big)^{p/2}
\Big]
+
|t-s|^{p-1}
\int_{s}^{t}
\bE\Big[
1+|X(u)|^{p}
+\sum_{\alpha \in R_{+}}
\frac{1}{\langle \alpha, X(u) \rangle^p}
\Big]
\rd u
\Big\}
\\&\leq
C_{p}(t-s)^{p/2}.
\end{align*}
This concludes the assertion.
\end{proof}

\section{A general theory on $\theta$-scheme for SDEs with one-sided Lipschitz drift}\label{Sec_3}

In this section, we propose a general procedure to verify the convergence of $\theta$-scheme applied for SDEs valued in an open set with one-sided Lipschitz drift.

%In order to prove the main results, we first consider a general setting.

Let $T\in (0,\infty)$, $d,r\in \bN$, let $D \subset \bR^{d}$ be open set, and let $V=(V(t))_{t \in [0,T]}$ be a solution of the SDEs of the form
\begin{equation*}%\label{SDE_general_0}
 \rd V(t)
=
\sigma(t,V(t))\rd B(t)
+
b(t,V(t))
\rd t
+
g(t,V(t))
\rd t,~t \in [0,T],~
V(0)=\xi \in D,
\end{equation*}
where $\sigma :[0,T] \times\bR^{d}\to\bR^{d\times r}$, $b:[0,T]\times\bR^d\to\bR^d$ and $g: [0,T]\times \bR^d\to \bR^d$ are mesurable functions.

We need the following assumptions for $V$ and the coefficients $\sigma$, $b$ and $g$.

\begin{Ass}\label{Ass_2}
\begin{enumerate}[label=(\roman*)]
\item It holds that
$\bP(V(t)\in D,~\forall t \in [0,T])=1$.
\item
There exists $K>0$ which satisfies for all $(t,x) \in [0,T] \times D$ that $\langle x,g(t,x) \rangle \leq K$.
Moreover, it holds for all $(t,x,y) \in [0,T] \times D^{2}$ that $\langle x-y,g(t,x)-g(t,y) \rangle \leq 0$.
\item 
There exists $L_{b,\sigma}\in (0,\infty)$ which satisfies that
\begin{equation*}% \label{lipschitz condition}
\sup_{f\in\{b,\sigma\}}\sup_{\substack{s,t\in [0,T]\\ s\neq t}}\sup_{\substack{x,y\in \bR^d\\x\neq y}} \bigg(\frac{|f(t,x)-f(s,x)|}{|t-s|^{1/2}}+\frac{|f(t,x)-f(t,y)|}{|x-y|}+|\sigma(t,x)|\bigg)\leq L_{b,\sigma}.
\end{equation*}
\item
There exists $h_0 \in(0,\infty)$ such that for all $(t,x) \in [0,T] \times \bR^{d}$ and $h \in (0,h_0)$, the system of equation $y=x+hg(t,y)$ has a unique solution in $D$.
\end{enumerate}
\end{Ass}

%\begin{Rem}
%If $V=X$ (resp. $V=X_{\varepsilon}$), then Assumption \ref{Ass_2} holds  with $D=\bW$, $g(t,x)=f_{k}(t,x)$ (resp. $D=\bR^{d}$ and $g(t,x)=f_{k,\varepsilon}(t,x)$), and $K=\sum_{\alpha \in R_{+}} \|k(\cdot, \alpha)\|_{\infty}$.
%\end{Rem}

%Indeed, it holds that for any $x \in \bW$, $\langle x,f_{k} \rangle=\sum_{\alpha \in R_{+}} k(\alpha)$ and for any $x \in \bR^{d}$, from \eqref{f_4}, $\langle x,f_{k,\varepsilon} \rangle \leq \sum_{\alpha \in R_{+}} k(\alpha)$.

%\begin{Def}[$\theta$-scheme]\label{def: theta scheme for V}

Let $n \in \bN, n \geq Th_0^{-1}$, and $\theta \in [0, 1/2)$, and we set $\Delta t:=T/n$, $\Delta B_{\ell}:=B(t_{\ell+1})-B(t_{\ell})$ and $t_{\ell}:=\ell \Delta t$ for each $\ell =0,\ldots,n$.
Under Assumption \ref{Ass_2}, we define the $\theta$-scheme $V^{(n,\theta)}=(V^{(n,\theta)}(t_{\ell}))_{\ell=0}^{n}$ for $V$ as follows:
$V^{(n,\theta)}(0):=V(0)=\xi\in D$ and for each $\ell=0,1,\ldots,n-1$, $V^{(n)}(t_{\ell+1})$ is the unique solution in $D$ of the following equation:
\begin{equation*}%\label{IEM_1}
\begin{split}
V^{(n,\theta)}(t_{\ell+1})
&=
V^{(n,\theta)}(t_{\ell})
+
\sigma(t_{\ell},V^{(n,\theta)}(t_{\ell}))
\Delta B_{\ell}
+
b(t_{\ell},V^{(n,\theta)}(t_{\ell}))
\Delta t
\\&\quad+
\theta
g(t_{\ell},V^{(n,\theta)}(t_{\ell}))
\Delta t
+
(1-\theta)
g(t_{\ell+1},V^{(n,\theta)}(t_{\ell+1}))
\Delta t.
\end{split}
\end{equation*}

We define the approximation error $e^{(n,\theta)}(\ell)$ for $\ell=0,\ldots,n$ by
\begin{align*}
e^{(n,\theta)}(\ell):=V(t_{\ell})-V^{(n,\theta)}(t_{\ell}).
\end{align*}
Then we use the following representation for $e^{(n,\theta)}(\ell+1)$:
\begin{align}\label{property_e}
e^{(n,\theta)}(\ell+1)
&=
e^{(n,\theta)}(\ell)
+
R^{(n,\theta)}(\sigma,b,\ell)
+
\theta R^{(n,\theta)}(g,\ell)+(1-\theta) R^{(n,\theta)}(g,\ell+1)
+
r^{(n)}(\ell),
\end{align}
where $R^{(n,\theta)}(\sigma,b,\ell):=R^{(n,\theta)}(\sigma,\ell)+R^{(n,\theta)}(b,\ell)$, $r^{(n)}(\ell):=r^{(n)}(\sigma,\ell)+r^{(n)}(b,\ell)+r^{(n)}(g,\ell)$ and
\begin{align*}
R^{(n,\theta)}(\sigma,\ell)&:=
\{
\sigma(t_{\ell},V(t_{\ell}))
-
\sigma(t_{\ell},V^{(n,\theta)}(t_{\ell}))
\}
\Delta B_{\ell},
%\label{def:R_sig}
\\
R^{(n,\theta)}(b,\ell)
&:=
\{
b(t_{\ell},V(t_{\ell}))
-
b(t_{\ell},V^{(n,\theta)}(t_{\ell}))
\}
\Delta t,
%\label{def:R_b}
\\
R^{(n,\theta)}(g,\ell)
&:=
\{
g(t_{\ell},V(t_{\ell}))
-
g(t_{\ell},V^{(n,\theta)}(t_{\ell}))
\}
\Delta t,\\
%\label{def:R_g}
r^{(n)}(\sigma,\ell)
&:=
\int_{t_{\ell}}^{t_{\ell+1}}
\big\{
\sigma(s,V(s))
-
\sigma(t_{\ell},V(t_{\ell}))
\big\}
\rd B(s),
%\label{def:r_sig}
\\
r^{(n)}(b,\ell)
&:=
\int_{t_{\ell}}^{t_{\ell+1}}
\big\{
b(s,V(s))
-
b(t_{\ell},V(t_{\ell}))
\big\}
\rd s,
%\label{def:r_b}
\\
r^{(n)}(g,\ell)
&:=
\int_{t_{\ell}}^{t_{\ell+1}}
\big\{
g(s,V(s))
-
\theta
g(t_{\ell},V(t_{\ell}))
-
(1-\theta)
g(t_{\ell+1},V(t_{\ell+1}))
\big\}
\rd s.
%\label{def:r_g}
\end{align*}
%Using above notations, we can write
%\begin{equation} \label{eqn_en} 
%e^{(n, \theta)}(\ell) = 
%\int_{0}^{t_{\ell}} \{\sigma(s,V(s))-\sigma(t_{\ell},V^{n,\theta}(\eta(s))\} \rd B(s)
%+\sum_{m=0}^{\ell-1}r^{(n)}(b,m)
%+\sum_{m=0}^{\ell-1}r^{(n)}(g,m).
%\end{equation} 

We obtain the following estimate for the $\theta$-scheme $V^{(n,\theta)}$.

\begin{Thm}\label{main_3}
Suppose that Assumption \ref{Ass_2} holds.
Assume that there exists $C>0$ such that for all $n \in \bN, n \geq Th_0^{-1},$ it holds that
\begin{align}\label{ass: main_3}
\sup_{\ell=1,\ldots,n}\bE[|r^{(n)}(g,\ell)|^{2}]
\leq
C(\Delta t)^{3}\quad\text{and}\quad \sup_{\ell=0,1,\dots,n-1}\sup_{s\in[t_\ell,t_{\ell+1}]}\bE[|V(s)-V(t_\ell)|^2]\leq \frac{C}{n}.
\end{align}
Then for any $\theta_{0}\in [0,1/2)$, there exists $C_{\theta_{0}}>0$ such that for all $n \in \bN$ it holds that
\begin{align*}
\sup_{\theta \in [0,\theta_{0}]}
\bE\Big[\sup_{\ell =1,\ldots,n}\Big|V(t_{\ell})-V^{(n,\theta)}(t_{\ell})\Big|^{2}\Big]^{1/2}
\leq
\frac{C_{\theta_{0}}}{\sqrt{n}}.
\end{align*}
\end{Thm}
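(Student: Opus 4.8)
The plan is to run a discrete Gronwall argument on $\bE[\sup_{\ell \le m}|e^{(n,\theta)}(\ell)|^2]$ using the representation \eqref{property_e}. First I would expand $|e^{(n,\theta)}(\ell+1)|^2$ by squaring \eqref{property_e}. The crucial algebraic observation is that the implicit term $(1-\theta) R^{(n,\theta)}(g,\ell+1)$ appears on both sides: writing $e^{(n,\theta)}(\ell+1) - (1-\theta)R^{(n,\theta)}(g,\ell+1) = e^{(n,\theta)}(\ell) + R^{(n,\theta)}(\sigma,b,\ell) + \theta R^{(n,\theta)}(g,\ell) + r^{(n)}(\ell)$, one takes inner products. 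Using the one-sided Lipschitz property of $g$ from Assumption \ref{Ass_2}(ii), $\langle e^{(n,\theta)}(\ell+1), R^{(n,\theta)}(g,\ell+1)\rangle = \Delta t\, \langle V(t_{\ell+1}) - V^{(n,\theta)}(t_{\ell+1}), g(t_{\ell+1},V(t_{\ell+1})) - g(t_{\ell+1},V^{(n,\theta)}(t_{\ell+1}))\rangle \le 0$, so this term can be dropped (with the right sign) to obtain
\begin{align*}
|e^{(n,\theta)}(\ell+1)|^2 \le |e^{(n,\theta)}(\ell) + R^{(n,\theta)}(\sigma,b,\ell) + \theta R^{(n,\theta)}(g,\ell) + r^{(n)}(\ell)|^2 + 2(1-\theta)^2|R^{(n,\theta)}(g,\ell+1)|^2,
\end{align*}
or a variant thereof; here the constraint $\theta < 1/2$ is what makes the implicit contribution controllable. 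For the last term one uses the global Lipschitz bound on $g$ near the diagonal — wait, $g$ is only one-sided Lipschitz, so instead I would bound $|R^{(n,\theta)}(g,\ell+1)|$ by re-deriving it from the scheme equation minus the exact equation, i.e. express $R^{(n,\theta)}(g,\ell+1) \Delta t^{-1}\cdot(1-\theta)$ in terms of $e^{(n,\theta)}(\ell+1) - e^{(n,\theta)}(\ell) - R^{(n,\theta)}(\sigma,b,\ell) - \theta R^{(n,\theta)}(g,\ell) - r^{(n)}(\ell)$, giving a bound in terms of the other increments; this is the standard trick for implicit schemes with merely monotone drift.

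Next I would take expectations. The martingale term $R^{(n,\theta)}(\sigma,\ell)$ is $\sF_{t_\ell}$-conditionally centered, so $\bE[\langle e^{(n,\theta)}(\ell), R^{(n,\theta)}(\sigma,\ell)\rangle] = 0$, and $\bE[|R^{(n,\theta)}(\sigma,\ell)|^2] \le r L_{b,\sigma}^2 |e^{(n,\theta)}(\ell)|^2 \Delta t$ by the Lipschitz bound on $\sigma$ and the independence of $\Delta B_\ell$. The drift term $R^{(n,\theta)}(b,\ell)$ and the $\theta R^{(n,\theta)}(g,\ell)$ term — the latter only after the re-derivation above — contribute $O(\Delta t)$ times $|e^{(n,\theta)}(\ell)|^2$ plus $O(\Delta t)$ times the norm of $r^{(n)}(\ell)$ via Young's inequality. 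The remainder $r^{(n)}(\ell)$ splits as $r^{(n)}(\sigma,\ell) + r^{(n)}(b,\ell) + r^{(n)}(g,\ell)$; using the joint $1/2$-Hölder-in-time and Lipschitz-in-space regularity of $\sigma, b$ from Assumption \ref{Ass_2}(iii) together with the hypothesis $\sup_{s\in[t_\ell,t_{\ell+1}]}\bE[|V(s)-V(t_\ell)|^2] \le C/n$ one gets $\bE[|r^{(n)}(\sigma,\ell)|^2] + \bE[|r^{(n)}(b,\ell)|^2] = O((\Delta t)^2)$, and the extra hypothesis $\bE[|r^{(n)}(g,\ell)|^2] \le C(\Delta t)^3$ handles the stiff part (the weaker $(\Delta t)^2$ would suffice in the recursion but the cleaner rate is available). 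Summing over $\ell$, the total deterministic error $\sum_\ell (\Delta t)^{-1}\bE[|r^{(n)}(\ell)|^2]$ (the factor $(\Delta t)^{-1}$ coming from Young's inequality tuned to absorb cross terms) is $O(\Delta t) = O(1/n)$.

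After taking expectations I would arrive at a recursion of the form $a_{\ell+1} \le (1 + C\Delta t) a_\ell + C(\Delta t)^2 + (\text{martingale-increment contribution})$ where $a_\ell = \bE[|e^{(n,\theta)}(\ell)|^2]$, and iterating gives $\max_\ell a_\ell \le e^{CT} \cdot C \Delta t = O(1/n)$ with constant uniform over $\theta \in [0,\theta_0]$ since all constants depend only on $\theta_0$, $L_{b,\sigma}$, $K$, $T$, $r$. To upgrade from $\max_\ell \bE[|e^{(n,\theta)}(\ell)|^2]$ to $\bE[\max_\ell |e^{(n,\theta)}(\ell)|^2]$ I would go back to the recursion before taking expectations, keep the running sum of the martingale increments $\sum_{j<\ell} \langle \cdot, R^{(n,\theta)}(\sigma,j)\rangle$ explicit, take $\sup_\ell$ first and then $\bE$, and apply the Burkholder–Davis–Gundy inequality to the discrete martingale, whose bracket is bounded by $\Delta t \sum_j |e^{(n,\theta)}(j)|^2$; BDG plus Young then feeds $\tfrac12 \bE[\max_\ell |e^{(n,\theta)}(\ell)|^2]$ back into the left side, which is absorbed, leaving $\bE[\max_\ell |e^{(n,\theta)}(\ell)|^2] \le C_{\theta_0} \Delta t \sum_j \bE[|e^{(n,\theta)}(j)|^2] + O(1/n)$, and the already-established bound on $\bE[|e^{(n,\theta)}(j)|^2]$ closes it.

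I expect the main obstacle to be the careful bookkeeping around the implicit term: because $g$ is only one-sided Lipschitz (not Lipschitz), one cannot directly estimate $|R^{(n,\theta)}(g,\ell+1)|$, so the proof must be organized so that this quantity is either killed by the monotonicity inner product or re-expressed through the other, controllable, increments — and the coefficient arithmetic (where $1-\theta$ versus $\theta$ appears, and why $\theta < 1/2$ rather than $\theta \le 1$ is needed to keep the recursion contractive modulo $O(\Delta t)$) has to be done with some care to get a constant uniform in $\theta$ on $[0,\theta_0]$. The remainder estimates and the BDG upgrade are comparatively routine given Assumption \ref{Ass_2} and the hypothesis \eqref{ass: main_3}.
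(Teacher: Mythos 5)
Your overall architecture (a recursion for $\bE[|e^{(n,\theta)}(\ell)|^2]$, then a BDG upgrade to the running maximum) matches the paper's, but there is a genuine gap in how you handle the implicit $g$-increment, and the workaround you sketch would not close. After isolating $(1-\theta)R^{(n,\theta)}(g,\ell+1)$ on the left and squaring, monotonicity of $g$ lets you drop the cross term $-2(1-\theta)\langle e^{(n,\theta)}(\ell+1), R^{(n,\theta)}(g,\ell+1)\rangle \ge 0$, but the quadratic $(1-\theta)^2|R^{(n,\theta)}(g,\ell+1)|^2$ remains on the same side as $|e^{(n,\theta)}(\ell+1)|^2$; moved across it appears with a \emph{minus} sign, $-\,(1-\theta)^2|R^{(n,\theta)}(g,\ell+1)|^2$, not the $+\,2(1-\theta)^2|R^{(n,\theta)}(g,\ell+1)|^2$ you wrote. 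That sign is the whole point. Expanding the right-hand square gives $+\theta^2|R^{(n,\theta)}(g,\ell)|^2$ and a cross term $2\theta\langle R^{(n,\theta)}(g,\ell),\,R^{(n,\theta)}(\sigma,b,\ell)+r^{(n)}(\ell)\rangle$. Applying Young with the precise split $2\theta\langle a,b\rangle \le (1-2\theta)|a|^2 + \tfrac{\theta^2}{1-2\theta}|b|^2$ — this is exactly where $\theta < 1/2$ enters — and using $\theta^2 + (1-2\theta) = (1-\theta)^2$ turns the $g$-quadratics into the telescoping difference $(1-\theta)^2\bigl\{|R^{(n,\theta)}(g,\ell)|^2 - |R^{(n,\theta)}(g,\ell+1)|^2\bigr\}$, which sums to zero because $R^{(n,\theta)}(g,0)=0$. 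So one never needs a bound on $|R^{(n,\theta)}(g,\ell+1)|$; you missed this cancellation and as a result went looking for one.

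The re-derivation you propose in its place does not work: from the scheme identity, $(1-\theta)R^{(n,\theta)}(g,\ell+1)$ equals a sum that includes $R^{(n,\theta)}(\sigma,\ell)$, whose $L^2$ size is of order $\sqrt{\Delta t}\,|e^{(n,\theta)}(\ell)|$; squaring gives a contribution of order $\Delta t$ per step, so $\sum_\ell |R^{(n,\theta)}(g,\ell)|^2 = O(1)$ rather than $O(\Delta t)$, and the Gronwall recursion does not close. A smaller slip: your remark that the weaker rate $\bE[|r^{(n)}(g,\ell)|^2] = O((\Delta t)^2)$ would suffice is incorrect, because the Young split $\bE[\langle e^{(n,\theta)}(\ell), r^{(n)}(g,\ell)\rangle] \le \tfrac{\Delta t}{2}\bE[|e^{(n,\theta)}(\ell)|^2] + \tfrac{(\Delta t)^{-1}}{2}\bE[|r^{(n)}(g,\ell)|^2]$ needs $(\Delta t)^3$ for the second term to be $O((\Delta t)^2)$ per step, hence $O(\Delta t)$ after summing. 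The remainder estimates, the discrete Gronwall step, and the BDG argument for the supremum in the latter part of your proposal do line up with the paper's.
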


\begin{Rem}
Higham, Mao, and Stuart \cite{HMS} studied a backward Euler--Maruyama scheme. Under the assumptions of a one-sided Lipschitz condition and local Lipschitz continuity for the drift coefficient, they showed that the scheme converges in the $L^{2}$-sup norm with an order of $1/2$ (see Theorem 5.3 in \cite{HMS}). However, in our context, the drift coefficient $g$ is not assumed to satisfy the local Lipschitz condition. Therefore, their result does not directly lead to the estimate presented in Theorem \ref{main_3}.
\end{Rem}

\begin{proof}[Proof of Theorem \ref{main_3}]
We first prove
\begin{align}\label{main_3_0}
\sup_{\theta \in [0,\theta_{0}]}
\sup_{\ell =1,\ldots,n}
\bE\Big[\Big|V(t_{\ell})-V^{(n,\theta)}(t_{\ell})\Big|^{2}\Big]^{1/2}
\leq
\frac{C_{\theta_{0}}}{\sqrt{n}}.
\end{align}
Let $\theta \in [0,\theta_{0}]$ be fixed.
By using the representation \eqref{property_e}, for each $\ell\in 0,1,\dots,n-1$, we have
\begin{equation*}
|e^{(n,\theta)}(\ell+1)-(1-\theta)R^{(n,\theta)}(g,\ell+1)|^{2}
=
|e^{(n,\theta)}(\ell)+\theta R^{(n,\theta)}(g,\ell)+R^{(n,\theta)}(\sigma,b,\ell)+r^{(n)}(\ell)|^{2}.
\end{equation*}
By Assumption \ref{Ass_2} (ii), it holds that $\langle e^{(n,\theta)}(\ell),R^{(n,\theta)}(g,\ell) \rangle \leq 0$.
Hence we obtain
\begin{equation}\label{eqn23.5}
\begin{split}
|e^{(n,\theta)}(\ell+1)|^{2}
&\leq
|e^{(n,\theta)}(\ell)|^{2}
+
2\langle e^{(n,\theta)}(\ell), R^{(n,\theta)}(\sigma,\ell)+R^{(n,\theta)}(b,\ell)+r^{(n)}(\ell)\rangle
\\&\quad
+2\theta
\langle R^{(n,\theta)}(g,\ell), R^{(n,\theta)}(\sigma,b,\ell)+r^{(n)}(\ell)\rangle
+
|R^{(n,\theta)}(\sigma,b,\ell)+r^{(n)}(\ell)|^{2}
\\&\quad+
\theta^{2} |R^{(n,\theta)}(g,\ell)|^{2}
-
(1-\theta)^{2}|R^{(n,\theta)}(g,\ell+1)|^{2}.
\end{split}
\end{equation}
By using Young's inequality, $2 \theta |\langle a,b\rangle| \leq (1-2\theta) |a|^{2}+\frac{\theta^2}{1-2\theta}|b|^{2}$ for $a,b \in \bR^{d}$, we have
\begin{equation*}%\label{theta scheme: eq1}
2\theta
\langle R^{(n,\theta)}(g,\ell), R^{(n,\theta)}(b,\ell)+r^{(n)}(\ell)\rangle
\leq
(1-2\theta)|R^{(n,\theta)}(g,\ell)|^{2}
+
\frac{\theta^{2}}{1-2\theta}|R^{(n,\theta)}(\sigma,b,\ell)+r^{(n)}(\ell)|^{2}.
\end{equation*}
Note that  $\theta^{2}+(1-2\theta)=(1-\theta)^{2}$.
It follows from \eqref{eqn23.5} and the fact that $\theta \in [0,\theta_0]$  that
\begin{align*}%\label{eq: e2}
%\begin{split}
|e^{(n,\theta)}(\ell+1)|^{2}
&\leq
|e^{(n,\theta)}(\ell)|^{2}
+
2\langle e^{(n,\theta)}(\ell), R^{(n,\theta)}(\sigma,\ell)+R^{(n,\theta)}(b,\ell)+r^{(n)}(\ell)\rangle
\\&\quad+
\Big\{1+\frac{\theta_{0}^{2}}{1-2\theta_{0}}\Big\}
|R^{(n,\theta)}(\sigma,b,\ell)+r^{(n)}(\ell)|^{2}
\\&\quad+
(1-\theta)^{2}\big\{|R^{(n,\theta)}(g,\ell)|^{2}-|R^{(n,\theta)}(g,\ell+1)|^{2}\big\}.
%\end{split}
\end{align*}
We consider the term $\langle e^{(n,\theta)}(\ell), R^{(n,\theta)}(b,\ell)\rangle$.
By using Cauchy--Schwarz inequality and Assumption \ref{Ass_2} (iii), we have
\begin{align*}%\label{theta scheme: eq2}
\langle e^{(n,\theta)}(\ell), R^{(n,\theta)}(b,\ell)\rangle
&=\langle e^{(n,\theta)}(\ell),
b(t_{\ell},V(t_{\ell}))
-
b(t_{\ell},V^{(n,\theta)}(t_{\ell}))
\rangle
\Delta t
%&\leq 
%|e^{(n,\theta)}(\ell)||b(t_{\ell},V(t_{\ell}))
%-
%b(t_{\ell},V^{(n,\theta)}(t_{\ell}))|\Delta t
%\notag
\leq
L_{b,\sigma}|e^{(n,\theta)}(\ell)|^{2} \Delta t.
\end{align*}
Therefore, we obtain
\begin{align}\label{eq_914_0}
\begin{split}
|e^{(n,\theta)}(\ell+1)|^{2}
&\leq
|e^{(n,\theta)}(\ell)|^{2}
+
L_{b,\sigma}|e^{(n,\theta)}(\ell)|^{2} \Delta t
+
2\langle e^{(n,\theta)}(\ell), R^{(n,\theta)}(\sigma,\ell)+r^{(n)}(\ell)\rangle
\\&\quad+
\Big\{1+\frac{\theta_{0}^{2}}{1-2\theta_{0}}\Big\}
|R^{(n,\theta)}(\sigma,b,\ell)+r^{(n)}(\ell)|^{2}
\\&\quad+
(1-\theta)^{2}\big\{|R^{(n,\theta)}(g,\ell)|^{2}-|R^{(n,\theta)}(g,\ell+1)|^{2}\big\}.
\end{split}
\end{align}
%We consider the term $\langle e^{(n,\theta)}(\ell), R^{(n,\theta)}(\sigma,\ell)\rangle$.
Since $\sigma$ is bounded, it holds that
\begin{align*}
%\label{theta scheme: eq3}
\bE[\langle e^{(n,\theta)}(\ell), R^{(n,\theta)}(\sigma,\ell)\rangle]
&=
\bE[\langle e^{(n,\theta)}(\ell), \{\sigma(t_{\ell},V(t_{\ell}))-\sigma(t_{\ell},V^{n,\theta}(t_{\ell}))\} \bE[\Delta B_{\ell}|\sF_{t_{\ell}}] \rangle]
=
0,\\
%\label{theta scheme: eq5}
\bE[\langle e^{(n,\theta)}(\ell), r^{(n)}(\sigma,\ell)\rangle]
&=
\bE\Big[
\Big\langle e^{(n,\theta)}(\ell),
\bE\Big[
\int_{t_{\ell}}^{t_{\ell+1}}
\big\{
\sigma(s,V(s))
-
\sigma(t_{\ell},V(t_{\ell}))
\big\}
\rd B(s)
\Big|\sF_{t_{\ell}}
\Big]
\Big\rangle
\Big]
=0.
\end{align*}
%Therefore, we obtain
%\begin{equation}\label{theta scheme: eq4}
%\bE[\langle e^{(n,\theta)}(\ell), R^{(n,\theta)}(b,\sigma,\ell)\rangle]
%%= \bE[\langle e^{(n,\theta)}(\ell), R^{(n,\theta)}(b,\ell)\rangle]+\bE[\langle e^{(n,\theta)}(\ell), %R^{(n,\theta)}(\sigma,\ell)\rangle]
%\leq L_{b,\sigma}|e^{(n,\theta)}(\ell)|^{2} \Delta t.
%\end{equation}
Therefore by taking the expectation in the both sides in \eqref{eq_914_0}, we have
\begin{equation}\label{eq: e3}
\begin{split}
\bE[|e^{(n,\theta)}(\ell+1)|^{2}]
&\leq
\bE[|e^{(n,\theta)}(\ell)|^{2}]
+
2L_{b,\sigma}\bE[|e^{(n,\theta)}(\ell)|^{2}] \Delta t
+
2\bE[\langle e^{(n,\theta)}(\ell),r^{(n)}(\ell)\rangle]
\\&\quad+
\Big\{1+\frac{\theta_{0}^{2}}{1-2\theta_{0}}\Big\}
\bE[|R^{(n,\theta)}(\sigma,b,\ell)+r^{(n)}(\ell)|^{2}]
\\&\quad+
(1-\theta)^{2}
\bE[|R^{(n,\theta)}(g,\ell)|^{2}-|R^{(n,\theta)}(g,\ell+1)|^{2}].
\end{split}
\end{equation}
%This, the fact that $r^{(n)}(\ell)=r^{(n)}(\sigma,\ell)+r^{(n)}(b,\ell)+r^{(n)}(g,\ell)$, the fact that $R^{(n,\theta)}(\sigma,b,\ell)=R^{(n,\theta)}(\sigma,\ell)+R^{(n,\theta)}(b,\ell)$, and Young inequality show for all $\ell=0,1,\dots,n-1$, $\theta\in[0,\theta_0]$ that
%\begin{equation}\label{eq: e4}
%\begin{split}
%&\bE[|e^{(n,\theta)}(\ell+1)|^{2}]\\
%&\leq
%\bE[|e^{(n,\theta)}(\ell)|^{2}]
%+
%2L_{b,\sigma}\bE[|e^{(n,\theta)}(\ell)|^{2}] \Delta t
%+
%2\bE[\langle e^{(n,\theta)}(\ell),r^{(n)}(b,\ell)\rangle]+2\bE[\langle %e^{(n,\theta)}(\ell),r^{(n)}(\sigma,\ell)\rangle]+2\bE[\langle e^{(n,\theta)}(\ell),r^{(n)}(g,\ell)\rangle]
%\\&\quad+
%(5+20\theta^{2}\delta^{-1})
%\bE[|R^{(n,\theta)}(b,\ell)|^2+|R^{(n,\theta)}(\sigma,\ell)|^2
%+|r^{(n)}(\sigma,\ell)|^2+|r^{(n)}(b,\ell)|^2+|r^{(n)}(g,\ell)|^2]\\
%&\quad+(1-\theta)^{2}
%\bE[|R^{(n,\theta)}(g,\ell)|^{2}-|R^{(n,\theta)}(g,\ell+1)|^{2}]\\
%\end{split}
%\end{equation}

Next we consider the estimate for $\bE[|R^{(n,\theta)}(\sigma,b,\ell)+r^{(n)}(\ell)|^{2}]$.
By the definition of $R^{(n,\theta)}(\sigma,\ell)$, $R^{(n,\theta)}(b,\ell)$, $R^{(n,\theta)}(b,\ell)$, the assumption of $r^{(n)}(g,\ell)$ and Assumption \ref{Ass_2} (iii), there exists a constant $C_{1}>0$ such that
\begin{align}
\bE[|R^{(n,\theta)}(\sigma,\ell)|^{2}]
&\leq
C_{1}\bE[|e^{(n,\theta)}(\ell)|^{2}] \Delta t,
\label{R_esti_0}\\
\bE[|R^{(n,\theta)}(b,\ell)|^{2}]
&\leq
C_{1}\bE[|e^{(n,\theta)}(\ell)|^{2}] (\Delta t)^{2},
\label{R_esti_1}\\
\bE[|r^{(n)}(b,\ell)|^{2}]
&\leq
C_{1}(\Delta t)^{3}\label{R_esti_2}.
\end{align}
%\begin{equation}\label{theta scheme: eq6}
%\bE[|R^{(n,\theta)}(\sigma,\ell)|^{2}]
%&\leq C_2\bE\big[
%\big|
%\sigma(t_\ell,V(t_\ell))
%-
%\sigma(t_\ell,V^{n,\theta}(t_\ell))\big|^2|\Delta B_\ell|^2\big]\\
%&\leq C_2\bE\big[
%L^2_{b,\sigma}|V(t_\ell))
%-
%V^{n,\theta}(t_\ell)|^2\big]\Delta t
%\leq C_1\bE[|e^{(n,\theta)}(\ell)|^{2}] \Delta t,
%\end{equation}
%and
%\begin{equation}\label{theta scheme: eq7}
%\begin{split}
%\bE[|R^{(n,\theta)}(b,\ell)|^{2}]&=\bE[|
%b(t_{\ell},V(t_{\ell}))
%-
%b(t_{\ell},V^{(n,\theta)}(t_{\ell}))
%5|^2
%(\Delta t)^2]\\
%&\leq L_{b,\sigma}\bE[|V(t_{\ell})-V^{(n,\theta)}(t_{\ell})|^{2}] (\Delta %t)^{2}=
%L_{b,\sigma}\bE[|e^{(n,\theta)}(\ell)|^{2}] (\Delta t)^{2}.
%%\end{split}
%\end{equation}
%Note that \eqref{lipschitz condition}, Holder inequality, and \eqref{ass: main_3} prove that there exists $C_3>0$ such that for all $\ell=0,1,\dots,n-1$, $\theta\in[0,\theta_0]$ it holds that
%\begin{equation}\label{theta scheme: eq8}
%\begin{split}
%\bE[r^{(n)}(b,\ell)^{2}]&=\bE\bigg[\bigg|\int_{t_{\ell}}^{t_{\ell+1}}
%\big\{
%b(s,V(s))
%-
%b(t_{\ell},V(t_{\ell}))
%\big\}
%\rd s\bigg|^2\bigg]\\
%&\leq \Delta t \,\bE\bigg[\int_{t_{\ell}}^{t_{\ell+1}}
%L_{b,\sigma}^2\big|
%V(s)
%-
%V(t_{\ell})
%\big|^2
%\rd s\bigg]\leq 
%C_{3}(\Delta t)^{3}.
%\end{split}
%\end{equation}
Moreover, by using It\^o's isometry, Assumption \ref{Ass_2} (iii), and \eqref{ass: main_3}, there exists $C_{2}>0$ such that
\begin{align}\label{theta scheme: eq9}
\bE[|r^{(n)}(\sigma,\ell)|^{2}]
&=
\sum_{j=1}^{r}
\bE\Big[
\Big|
\int_{t_{\ell}}^{t_{\ell+1}}
\sum_{i=1}^{d}
\big\{
\sigma_{i,j}(s,V(s))
-
\sigma_{i,j}(t_{\ell},V(t_{\ell}))
\big\}
\rd B_{j}(s)
\Big|^{2}
\Big]
\notag
\\&=
\sum_{j=1}^{r}
\bE\Big[
\int_{t_{\ell}}^{t_{\ell+1}}
\Big|
\sum_{i=1}^{d}
\big\{
\sigma_{i,j}(s,V(s))
-
\sigma_{i,j}(t_{\ell},V(t_{\ell}))
\big\}
\Big|^{2}
\rd s
\Big]
\notag
\\&\leq
d
\bE\Big[
\int_{t_{\ell}}^{t_{\ell+1}}
\big|
\sigma(s,V(s))
-
\sigma(t_{\ell},V(t_{\ell}))
\big|^{2}
\rd s
\Big]
%%\leq d
%\bE\Big[
%\int_{t_{\ell}}^{t_{\ell+1}}
%L_{b,\sigma}^2
%\big|
%V(s)
%-
%V(t_{\ell})
%\big|^{2}
%\rd s
%\Big]
\leq
C_{2}(\Delta t)^{2}.
\end{align}
By using Young's inequality, assumption \eqref{ass: main_3}, and estimate \eqref{R_esti_2}, we obtain
\begin{align}
\bE[\langle e^{(n,\theta)}(\ell), r^{(n)}(b,\ell)\rangle]
&\leq
\frac{\Delta t}{2}
\bE[|e^{(n,\theta)}(\ell)|^{2}]
+
\frac{(\Delta t)^{-1}}{2}
\bE[r^{(n)}(b,\ell)^{2}]
\notag\\&\leq
\frac{\Delta t}{2}
\bE[|e^{(n,\theta)}(\ell)|^{2}]
+
\frac{C_{1}}{2}(\Delta t)^{2}, \label{theta scheme: eq10}\\
\bE[\langle e^{(n,\theta)}(\ell), r^{(n)}(g,\ell)\rangle]
&\leq
\frac{\Delta t}{2}
\bE[|e^{(n,\theta)}(\ell)|^{2}]
+
\frac{(\Delta t)^{-1}}{2}
\bE[r^{(n)}(g,\ell)^{2}]
\notag\\&\leq
\frac{\Delta t}{2}
\bE[|e^{(n,\theta)}(\ell)|^{2}]
+
\frac{C_{g}}{2}(\Delta t)^{2} \label{theta scheme: eq11}.
\end{align}
Combining \eqref{ass: main_3}, \eqref{eq: e3}, \eqref{R_esti_0}, \eqref{theta scheme: eq9}, \eqref{theta scheme: eq10}, \eqref{theta scheme: eq11}, there exists $C_{3}>0$ such that
\begin{align*}
\bE[|e^{(n,\theta)}(\ell+1)|^{2}]
&\leq
\bE[|e^{(n,\theta)}(\ell)|^{2}]
+
C_{3}\bE[|e^{(n,\theta)}(\ell)|^{2}] \Delta t
+
C_{3} (\Delta t)^{2}
\notag\\&+
(1-\theta)^{2}
\bE[|R^{(n,\theta)}(g,\ell)|^{2}-|R^{(n,\theta)}(g,\ell+1)|^{2}].
\end{align*}
Therefore, by induction with the fact that $R^{(n,\theta)}(g,0)=0$, we obtain
\begin{align*}
\bE[|e^{(n,\theta)}(\ell+1)|^{2}]
&\leq C_{3}\Delta t\sum_{i=0}^{\ell}\bE[|e^{(n,\theta)}(i)|^{2}] +C_{3}T(\Delta t)^2
\notag\\&\quad+
(1-\theta)^2 \sum_{i=0}^{\ell}\bE[|R^{(n,\theta)}(g,i)|^{2}-|R^{(n,\theta)}(g,i+1)|^{2}]\\
&\leq C_{3}\Delta t\sum_{i=0}^{\ell} \bE[|e^{(n,\theta)}(i)|^{2}] +C_{3}T \Delta t.
\end{align*}
By using discrete Gronwall's inequality (e.g. Chapter XIV, Theorem 1 and Remark 1,2 in \cite{MiPeFi}, page 436-437), there exists $C_{4}>0$ such that
\begin{align*}%\label{est_sup_e}
\sup_{\theta \in [0,\theta_{0}]}
\sup_{\ell=0,\ldots, n}
\bE[|e^{(n,\theta)}(\ell)|^{2}]
&\leq
C_{4} \Delta t.
\end{align*}
This concludes \eqref{main_3_0}.

By using \eqref{eq_914_0} and induction, we have
\begin{align*}
|e^{(n,\theta)}(\ell)|^{2}
&\leq
L_{b,\sigma}\sum_{m=0}^{\ell-1}|e^{(n,\theta)}(m)|^{2} \Delta t
+
2\sum_{m=0}^{\ell-1}\langle e^{(n,\theta)}(m), R^{(n,\theta)}(\sigma,m)+r^{(n)}(m)\rangle
\\&\quad+
\Big\{1+\frac{\theta_{0}^{2}}{1-2\theta_{0}}\Big\}
\sum_{m=0}^{\ell-1}
|R^{(n,\theta)}(\sigma,b,m)+r^{(n)}(m)|^{2}
\\&\quad+
(1-\theta)^{2}\sum_{m=0}^{\ell-1}\big\{|R^{(n,\theta)}(g,m)|^{2}-|R^{(n,\theta)}(g,m+1)|^{2}\big\}
\\&\leq
(1+L_{b,\sigma})\sum_{m=0}^{\ell-1}|e^{(n,\theta)}(m)|^{2} \Delta t
+
2\Big|\sum_{m=0}^{\ell-1}\langle e^{(n,\theta)}(m), R^{(n,\theta)}(\sigma,m)+r^{(n)}(\sigma,m)\rangle\Big|
\\&\quad+
(\Delta t)^{-1}
\sum_{m=0}^{\ell-1}
|r^{(n)}(b,m)+r^{(n)}(g,m)|^{2}
\\&\quad+
2\Big\{1+\frac{\theta_{0}^{2}}{1-2\theta_{0}}\Big\}
\sum_{m=0}^{\ell-1}
\Big\{
|R^{(n,\theta)}(\sigma,b,m)|^{2}+|r^{(n)}(m)|^{2}
\Big\}.
\end{align*}
Therefore, by taking the supremum with respect to $\ell=1,\ldots,n$ and using \eqref{ass: main_3}, \eqref{main_3_0}, \eqref{R_esti_0},\eqref{R_esti_1}, \eqref{R_esti_2} and \eqref{theta scheme: eq9}, there exists $C_{5}>0$ such that
\begin{align*}
\bE\Big[
\sup_{\ell=1,\ldots,n}
|e^{(n,\theta)}(\ell)|^{2}
\Big]
&\leq
C_{5}\Delta t
+
2
\bE\Big[
\sup_{\ell=1,\ldots,n}
\Big|\sum_{m=0}^{\ell-1}\langle e^{(n,\theta)}(m), R^{(n,\theta)}(\sigma,m)+r^{(n)}(\sigma,m)\rangle\Big|
\Big].
\end{align*}
By the definitions of $R^{(n,\theta)}(\sigma,m)$ and $r^{(n)}(\sigma,m)$, it holds that
\begin{align*}
\sum_{m=0}^{\ell-1}\langle e^{(n,\theta)}(m), R^{(n,\theta)}(\sigma,m)+r^{(n)}(\sigma,m)\rangle
=
M^{(n,\theta)}(t_{\ell}),
\end{align*}
where $M^{(n,\theta)}$ is a martingale defined by
\begin{align*}
M^{(n,\theta)}(t)
:=
\int_{0}^{t}
\langle
V(\eta(s))-V^{(n,\theta)}(\eta(s)),
\{\sigma(s,V(s))-\sigma(\eta(s),V^{(n,\theta)}(\eta(s)))\}\rd B(s)
\rangle.
\end{align*}
Here $\eta$ is defined by $\eta(s):=t_{m}$ if $s \in [t_{m},t_{m+1})$.
By using Burkholder--Davis--Gundy's inequality and Young's inequality, there exists $C_{6}>0$ such that
\begin{align*}
\bE\Big[\sup_{\ell=1,\ldots,n}|M^{(n,\theta)}(t_{\ell})|\Big]
&\leq
\bE\Big[\sup_{t \in [0,T]}|M^{(n,\theta)}(t)|\Big]
\\&\leq
C_{6}
\bE\Big[
\Big(
\int_{0}^{T}
|V(\eta(s))-V^{(n,\theta)}(\eta(s))|^{2}
|\sigma(s,V(s))-\sigma(\eta(s),V^{(n,\theta)}(\eta(s)))|^{2}
\rd s
\Big)^{1/2}
\Big]
\\&\leq
C_{6}
\bE\Big[
\sup_{\ell=1,\ldots,n}
|e^{(n,\theta)}(\ell)|
\Big(
\int_{0}^{T}
|\sigma(s,V(s))-\sigma(\eta(s),V^{(n,\theta)}(\eta(s)))|^{2}
\rd s
\Big)^{1/2}
\Big]
\\&\leq
\frac{1}{4}
\bE\Big[
\sup_{\ell=1,\ldots,n}
|e^{(n,\theta)}(\ell)|^{2}
\Big]
+
C_{6}^{2}
\bE\Big[
\int_{0}^{T}
|\sigma(s,V(s))-\sigma(\eta(s),V^{(n,\theta)}(\eta(s)))|^{2}
\rd s
\Big].
\end{align*}
Hence, we obtain
\begin{align*}
\bE\Big[
\sup_{\ell=1,\ldots,n}
|e^{(n,\theta)}(\ell)|^{2}
\Big]
\leq
2C_{5}\Delta t
+
4C_{6}^{2}
\bE\Big[
\int_{0}^{T}
|\sigma(s,V(s))-\sigma(\eta(s),V^{(n,\theta)}(\eta(s)))|^{2}
\rd s
\Big].
\end{align*}
Finally, by using Assumption \ref{Ass_1} (ii), Assumption \ref{Ass_2} (iii) and \eqref{main_3_0}, there exists $C_{7}>0$ such that
\begin{align*}
\sup_{\theta \in [0,\theta_{0}]}
\bE\Big[\sup_{\ell=0,\ldots,n}|e^{(n,\theta)}(\ell)|^{2}\Big]
\leq
\frac{C_{7}}{\sqrt{n}}.
\end{align*}
This concludes the assertion.
\end{proof}

\section{Numerical schemes for non-colliding particle systems}\label{Sec_4}
%We propose two numerical schemes for a solution of SDE \eqref{SDE_1}. The first scheme is a $\ theta$ scheme, in which the approximate solution remains within the Weyl chamber $\bW$. The second scheme is a truncated $\theta$-scheme, which takes values in $\bR^{d}$, not in the Weyl chamber $\bW$, but it can be implemented efficiently in a computer.

Throughout this section, we always suppose that Assumption \ref{Ass_1} holds.
Additionally, we assume that the functions $b$, $\sigma$, and $k$ are $1/2$--H\"older continuous in time, that is,
\begin{align}\label{def: k 1/2}
\begin{split}
[f]_{1/2}
&:=
\sup_{x\in \bW, ~t,s \in [0,T],~t \neq s}
\frac{|f(t,x)-f(s,x)|}{|t-s|^{1/2}}
<\infty,~f \in \{b,\sigma\},\\
%,\label{def: b 1/2}\\
%[\sigma]_{1/2} 
%&:=
%\sup_{x\in \bW, ~t,s \in [0,T],~t \neq s}
%\frac{|\sigma(t,x)-\sigma(s,x)|}{|t-s|^{1/2}}
%<\infty, \label{def: sigma 1/2}\\
[k]_{1/2}
&:=
\sup_{\alpha \in R_{+}, ~t,s \in [0,T],~t \neq s}
\frac{|k(t,\alpha)-k(s,\alpha)|}{|t-s|^{1/2}}
<\infty.
\end{split}
\end{align}

\subsection{$\theta$-Euler--Maruyama scheme}

%We consider $\theta$-scheme for a solution of SDE \eqref{SDE_1}.
Let $x \in \bR^d$ and $c:R \to (0,\infty)$ be a measurable function.
Then from Lemma 4.1 in \cite{NT24+}, there exists a unique solution $y$ in the Weyl chamber $\bW$ of the following equation:
\begin{equation} \label{eqn_xi}
y = x + \sum_{\alpha \in R_{+}} \frac{c(\alpha)}{\langle \alpha, y \rangle} \alpha.
\end{equation}
Building on this fact, for any $\theta \in [0,1/2)$, we define a $\theta$-Euler--Maruyama scheme $X^{(n,\theta)} = (X^{(n,\theta)}(t_{\ell}))_{\ell=0}^{n}$ for $X$ as follows:
we set $X^{(n,\theta)}(0):=X(0)=\xi \in \bW$, and for each $\ell=0,\ldots,n-1$, we define $X^{(n,\theta)}(t_{\ell+1})$ as the unique solution in $\bW$ of the following equation:
\begin{align*}
X^{(n,\theta)}(t_{\ell+1})
&=
X^{(n,\theta)}(t_{\ell})
+
\sigma(t_{\ell},X^{(n,\theta)}(t_{\ell}))
\Delta B_{\ell}
+
b(t_{\ell},X^{(n,\theta)}(t_{\ell}))
\Delta t
\\&\quad+
\theta
\sum_{\alpha \in R_{+}}
\frac{k(t_{\ell},\alpha)}{\langle \alpha, X^{(n,\theta)}(t_{\ell}) \rangle}
\alpha
\Delta t
+
(1-\theta)
\sum_{\alpha \in R_{+}}
\frac{k(t_{\ell+1},\alpha)}{\langle \alpha, X^{(n,\theta)}(t_{\ell+1}) \rangle}
\alpha
\Delta t.
\end{align*}

We obtain the following result concerning the rate of strong convergence.

\begin{Thm}\label{main_1}
Assume that $p_{*} > 6$.
Then for any $\theta_{0} \in [0,1/2)$, there exists $C_{\theta_{0}}>0$ such that for any $n \in \bN$,
\begin{equation*}%\label{main_1: conclusion}
\sup_{\theta \in [0,\theta_{0}]}
\bE
\Big[
\sup_{\ell =1,\ldots,n}
\Big|
X(t_{\ell})
-
X^{(n,\theta)}(t_{\ell})
\Big|^{2}
\Big]^{1/2}
\leq
\frac{C_{\theta_{0}}}{\sqrt{n}}.
\end{equation*}
\end{Thm}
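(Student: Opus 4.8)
The plan is to deduce Theorem~\ref{main_1} from the abstract convergence result Theorem~\ref{main_3}, applied with $D = \bW$, $g(t,x) = f_k(t,x) = \sum_{\alpha \in R_+} \frac{k(t,\alpha)}{\langle \alpha, x\rangle}\alpha$, and $V = X$ the solution of SDE~\eqref{SDE_1}; the scheme $X^{(n,\theta)}$ is exactly the $\theta$-scheme $V^{(n,\theta)}$ from Section~\ref{Sec_3}. So the work consists of verifying the hypotheses of Theorem~\ref{main_3}. First I would check Assumption~\ref{Ass_2}: part (i) is the $\bW$-valued property from Theorem~\ref{Thm_1}; part (ii) is the one-sided Lipschitz property of $f_k$ on $\bW$ (noted right before \eqref{SDE_app}) together with the bound $\langle x, f_k(t,x)\rangle \le \sum_\alpha \|k(\cdot,\alpha)\|_\infty$, which follows since $\langle x, \alpha\rangle/\langle \alpha, x\rangle = 1$; part (iii) is Assumption~\ref{Ass_1}(i)(ii) plus the time-regularity \eqref{def: k 1/2} and the boundedness $\|\sigma\|_\infty < \infty$ (which holds because $\|\overline\sigma\|_\infty^2 \le 2\|k(\cdot,\alpha)\|_\infty < \infty$ by Assumption~\ref{Ass_1}(iii)); and part (iv) is precisely the solvability statement for \eqref{eqn_xi}, i.e.\ Lemma~4.1 in \cite{NT24+}, with any $h_0 > 0$.

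Next I would verify the two conditions in \eqref{ass: main_3}. The second condition, $\sup_{\ell}\sup_{s \in [t_\ell, t_{\ell+1}]}\bE[|X(s) - X(t_\ell)|^2] \le C/n$, is immediate from Lemma~\ref{Lem_Kol_0} applied with $p = 2$, which requires $p_* > 1$ — guaranteed here since $p_* > 6$. The first condition, $\sup_\ell \bE[|r^{(n)}(g,\ell)|^2] \le C(\Delta t)^3$, is the substantive one. Writing $r^{(n)}(g,\ell) = \int_{t_\ell}^{t_{\ell+1}}\{f_k(s,X(s)) - \theta f_k(t_\ell,X(t_\ell)) - (1-\theta)f_k(t_{\ell+1},X(t_{\ell+1}))\}\rd s$, I would bound the integrand by $|f_k(s,X(s)) - f_k(t_\ell, X(t_\ell))| + (1-\theta)|f_k(t_\ell,X(t_\ell)) - f_k(t_{\ell+1},X(t_{\ell+1}))|$ and then control increments of $f_k$. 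Using the elementary identity
\begin{align*}
\frac{k(t,\alpha)}{\langle \alpha, x\rangle} - \frac{k(s,\alpha)}{\langle \alpha, y\rangle}
= \frac{k(t,\alpha) - k(s,\alpha)}{\langle \alpha, x\rangle}
+ k(s,\alpha)\,\frac{\langle \alpha, y - x\rangle}{\langle \alpha, x\rangle\langle \alpha, y\rangle},
\end{align*}
together with \eqref{def: k 1/2}, the boundedness of $k$, and Cauchy--Schwarz, one gets a pointwise bound of the form $|f_k(t,x) - f_k(s,y)| \le C(|t-s|^{1/2} + |x-y|)\big(1 + \sum_{\alpha}\langle \alpha, x\rangle^{-1}\langle \alpha, y\rangle^{-1}\big)$. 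Taking $L^2$ norms and applying Hölder's inequality splits the estimate into a time/space increment factor and a negative-moment factor; the increment factor contributes $|t-s|$ (via Lemma~\ref{Lem_Kol_0}) and the negative-moment factor is finite by Theorem~\ref{Thm_negative}, provided the relevant exponents stay below $p_*$. A careful bookkeeping of exponents — one needs moments of order roughly $6$ of the $\langle \alpha, X\rangle^{-1}$, hence $p_* > 6$ — shows $\bE[|r^{(n)}(g,\ell)|^2] \le C\,\Delta t \cdot (\Delta t)^2 = C(\Delta t)^3$.

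The main obstacle is exactly this last estimate: making the powers match up so that the singular factors $\langle \alpha, X(s)\rangle^{-1}$ appearing (squared, after the $L^2$) alongside a second-order increment are all integrable. Concretely, after squaring and using Cauchy--Schwarz on the sum over $\alpha$, one faces terms like $\bE\big[\langle \alpha, X(s)\rangle^{-2}\langle \alpha, X(t_\ell)\rangle^{-2}|X(s) - X(t_\ell)|^2\big]$; splitting these via Hölder with a small weight on the increment and large weights on the singular terms requires negative moments of order close to $6$ but strictly less, which is why the hypothesis is $p_* > 6$ (and why, when $p_* = +\infty$, the convention fixes $p_* > 6$). Once \eqref{ass: main_3} is established, Theorem~\ref{main_3} applies verbatim and yields $\sup_{\theta \in [0,\theta_0]}\bE[\sup_{\ell}|X(t_\ell) - X^{(n,\theta)}(t_\ell)|^2]^{1/2} \le C_{\theta_0}/\sqrt n$, which is the claim.
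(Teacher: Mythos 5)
Your proposal follows essentially the same route as the paper: reduce to Theorem~\ref{main_3} via Lemma~\ref{Lem_Kol_0} and the bound $\bE[|r^{(n)}(f_k,\ell)|^2]\le C(\Delta t)^3$, control the increment of $f_k$ by separating the time increment (weighted by $\langle\alpha,x\rangle^{-1}$) from the space increment (weighted by $\langle\alpha,x\rangle^{-1}\langle\alpha,y\rangle^{-1}$), and close with H\"older plus Theorem~\ref{Thm_negative}, which is exactly why $p_*>6$ is needed. One small imprecision: your displayed pointwise bound $|f_k(t,x)-f_k(s,y)|\le C(|t-s|^{1/2}+|x-y|)\bigl(1+\sum_\alpha\langle\alpha,x\rangle^{-1}\langle\alpha,y\rangle^{-1}\bigr)$ does not follow literally from the identity you wrote (the time-increment piece carries only $\langle\alpha,x\rangle^{-1}$, not $\langle\alpha,x\rangle^{-1}\langle\alpha,y\rangle^{-1}$, and that single singular factor is not dominated by $1+\langle\alpha,x\rangle^{-1}\langle\alpha,y\rangle^{-1}$ in general), but this is easily repaired by keeping the two pieces separate as the paper does, and your exponent bookkeeping and conclusion are correct.
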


\begin{proof}%[Proof of Theorem \ref{main_1}]
By Lemma \ref{Lem_Kol_0} and  Theorem \ref{main_3}, we need to prove that there exists $C>0$ such that
\begin{equation*}%\label{main_1: eq1}
\sup_{\ell=1,\ldots,n}
\bE[|r^{(n)}(f_{k},\ell)|^{2}]
\leq
C(\Delta t)^{3}.
\end{equation*}
By the definition of $r^{(n)}(f_{k},\ell)$ and using Jensen's inequality, we have
\begin{align}\label{main_1: eq1}
\begin{split}
|r^{(n)}(f_{k},\ell)|^{2}
%&=\bigg|\int_{t_{\ell}}^{t_{\ell+1}}
%\big\{
%g(s,X(s))
%-
%\theta
%g(t_{\ell},X(t_{\ell}))
%-
%(1-\theta)
%g(t_{\ell+1},X(t_{\ell+1}))
%\big\}
%\rd s\bigg|^2
%\notag\\
&\leq
2\theta^{2}
\Delta t
\int_{t_{\ell}}^{t_{\ell+1}}
\big|
f_{k}(s,X(s))
-
f_{k}(t_{\ell},X(t_{\ell}))
\big|^{2}
\rd s
\\&\quad+
2(1-\theta)^{2}
\Delta t
\int_{t_{\ell}}^{t_{\ell+1}}
\big|
f_{k}(s,X(s))
-
f_{k}(t_{\ell+1},X(t_{\ell+1}))
\big|^{2}
\rd s.
\end{split}
\end{align}
By using Assumption (iii), \eqref{def: k 1/2} and Cauchy--Schwarz inequality, for any $s,t \in [0,T]$ with $s \neq  t$ and $x,y \in \bW$, we have
\begin{align*}
|f_{k}(s,x)-f_{k}(t,y)|^{2}
&
\leq
2|f_{k}(s,x)-f_{k}(t,x)|^{2}
+
|f_{k}(t,x)-f_{k}(t,y)|^{2}
%2\Big|
%\sum_{\alpha \in R_{+}}
%\frac{k(s,\alpha)-k(t,\alpha)}{\langle \alpha,x \rangle}
%\alpha
%\Big|^{2}
%+
%2\Big|
%\sum_{\alpha \in R_{+}}
%\frac{k(s,\alpha) \langle \alpha,y-x \rangle}{\langle \alpha,x \rangle \langle \alpha,y \rangle}
%\alpha
%\Big|^{2}
\\&\leq
2[k]_{1/2}^{2}|R_{+}|
|t-s|
\sum_{\alpha \in R_{+}}
\frac{|\alpha|^{2}}{\langle \alpha,x \rangle^{2}}
+
2|x-y|^{2}
\sum_{\alpha \in R_{+}}
\|k(\cdot, \alpha)\|^2_{\infty}|R_{+}|
\frac{|\alpha|^{4}}{\langle \alpha,x \rangle^{2} \langle \alpha,y \rangle^{2}}.
\end{align*}
Therefore, by \eqref{main_1: eq1}, H\"older's inequality, Theorem \ref{Thm_negative} and Lemma \ref{Lem_Kol_0}, there exist $C_{1},C_{2}>0$ such that
\begin{align*}
&\bE[|r^{(n)}(f_{k},\ell)|^{2}]
\\&\leq
C_{1}
(\Delta t)^{3}
\sup_{\alpha \in R_{+}}
\sup_{t \in [0,T]}
\bE[\langle \alpha, X(t) \rangle^{-2}]
\\&\quad+
C_{1}
\Delta t
\sup_{\alpha \in R_{+}}
\sup_{t \in [0,T]}
\bE[\langle \alpha, X(t) \rangle^{-6}]^{1/3}
\int_{t_{\ell}}^{t_{\ell+1}}
\big\{
\bE[|X(s)-X(t_{\ell})|^{6}]^{1/3}
+
\bE[|X(s)-X(t_{\ell+1})|^{6}]^{1/3}
\big\}
\rd s
\\&\leq
C_{2}(\Delta t)^{3}.
\end{align*}
%This proves \eqref{main_1: eq1}. In the next step we combine \eqref{main_1: eq1}, Lemma \ref{Lem_Kol_0}, Assumption \ref{Ass_1}, Theorem \ref{Thm_1}, and Theorem \ref{main_3} to obtain \eqref{main_1: conclusion}.
The proof is thus completed.
\end{proof}

\subsection{Truncated Euler--Maruyama  $\theta$-scheme}
One drawback of the $\theta$-Euler--Maruyama scheme is the requirement to solve the nonlinear equation \eqref{eqn_xi}, which can be computationally intensive. In this section, we introduce the truncated Euler--Maruyama $\theta$-scheme, which provides improved numerical efficiency. The main idea is to apply the $\theta$-scheme to the system \eqref{SDE_app} rather than the original system \eqref{SDE_1}.

Let $x \in \bR^d$.
Then, for $h \in (0,\varepsilon^{2}/L_{k})$, where $L_{k}$ is defined in \eqref{def_L},  the  equation
\begin{equation} \label{eqn_xi_1}
y=x + hf_{k,\varepsilon}(t,y)
\end{equation}
has a unique solution in $\bR^{d}$.
Moreover, we can construct the unique solution $y$ of \eqref{eqn_xi_1} as follows.
We set $F_{k,\varepsilon}(t,y):=x+hf_{k,\varepsilon}(t,y)$.
Let $y_{0}=x$ and $y_{n}:=F_{k,\varepsilon}(t,y_{n-1})$.
Then by \eqref{f_1}, we have for any $y,y' \in \bR^{d}$, $|F_{k,\varepsilon}(t,y)-F_{k,\varepsilon}(t,y')| \leq L_{k}\varepsilon^{-2}h|y-y'|$.
Since $L_{k}\varepsilon^{-2}h<1$, $y_{n}$ converges to the unique fixed point $y$ of $y=F_{k,\varepsilon}(t,y)=x+f_{k,\varepsilon}(t,y)$.
In particular, it holds that $|y-y_{n}| \leq \frac{\sum_{\alpha \in R_{+}}k(t,\alpha)|\alpha|}{L_{k}(1-L_{k}\varepsilon^{-2}h)}
\cdot \varepsilon\cdot (L_{k}\varepsilon^{-2}h)^{n}.
$

Let $\varepsilon>0$ and $n >T L_{k}/\varepsilon^{2}$.
Based on the above fact, for $\theta \in [0,1)$, we can define a truncated $\theta$-scheme $X_{\varepsilon}^{(n,\theta)}=(X_{\varepsilon}^{(n,\theta)}(t_{\ell}))_{\ell=0}^{n}$ for  $X_{\varepsilon}$ as follows: $X_{\varepsilon}^{(n,\theta)}(0):=X_{\varepsilon}(0)=\xi \in \bW$, and for each $\ell=0,\ldots,n-1$, $X_{\varepsilon}^{(n,\theta)}(t_{\ell+1})$ is the unique solution in $\bR^{d}$ of the following equation:
\begin{align*}
X_{\varepsilon}^{(n,\theta)}(t_{\ell+1})
&=
X_{\varepsilon}^{(n,\theta)}(t_{\ell})
+
\sigma(t_{\ell},X_{\varepsilon}^{(n,\theta)}(t_{\ell}))
\Delta B_{\ell}
+
b(t_{\ell},X_{\varepsilon}^{(n,\theta)}(t_{\ell}))
\Delta t
\\&\quad+
\theta
\sum_{\alpha \in R_{+}}
\frac{k(t_{\ell},\alpha)}{\langle \alpha, X_{\varepsilon}^{(n,\theta)}(t_{\ell})\rangle \vee \varepsilon}
\alpha
\Delta t
+
(1-\theta)
\sum_{\alpha \in R_{+}}
\frac{k(t_{\ell+1},\alpha)}{\langle \alpha, X_{\varepsilon}^{(n,\theta)}(t_{\ell+1}) \rangle \vee \varepsilon}
\alpha
\Delta t.
\end{align*}

We obtain the following result concerning the rate of strong convergence.

\begin{Thm}\label{main_2}
%Suppose that Assumption \ref{Ass_1} holds. We assume that $p_{*}>6$ and $b$, $\sigma$ and $k$ is $1/2$-H\"older continuous in time.
Let $c>1$ and let $\varepsilon_n = c\sqrt{L_k \Delta t}$. %, where $L_k$ is given in \eqref{def_L}. 
%Assume that $\varepsilon \in (0, \min_{\alpha \in R_{+}} \langle \alpha,\xi \rangle)$ and $\Delta t=T/n = \varepsilon^{2}/L_{k}$.
Assume that $p_{*} > 8$. %, where $p_{*}$ is given in \eqref{def: p*}.
Then for any $\theta_0 \in (0, 1/2)$,
there exists $C_{\theta_{0}}>0$ such that for all $n > TL_k (\min_{\alpha \in R_+}\langle \alpha, \xi\rangle)^{-2}$,
\begin{equation*}%\label{conclude: main_2}
\sup_{\theta \in [0,\theta_0]}
\bE
\Big[
\sup_{\ell =1,\ldots,n}
\Big|
X(t_{\ell})
-
X_{\varepsilon_n}^{(n,\theta)}(t_{\ell})
\Big|^{2}
\Big]^{1/2}
\leq
\frac{C_{\theta_{0}}}{\sqrt{n}}.
\end{equation*}
\end{Thm}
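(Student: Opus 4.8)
The plan is to split the error into two pieces, using the intermediate process $X_\varepsilon$ solving the truncated SDE \eqref{SDE_app}:
$$
\bE\Big[\sup_{\ell}|X(t_\ell)-X_{\varepsilon_n}^{(n,\theta)}(t_\ell)|^2\Big]^{1/2}
\leq
\bE\Big[\sup_{\ell}|X(t_\ell)-X_{\varepsilon_n}(t_\ell)|^2\Big]^{1/2}
+
\bE\Big[\sup_{\ell}|X_{\varepsilon_n}(t_\ell)-X_{\varepsilon_n}^{(n,\theta)}(t_\ell)|^2\Big]^{1/2}.
$$
For the second (discretization) term I would apply Theorem \ref{main_3} with $D=\bR^d$, $g(t,\cdot)=f_{k,\varepsilon_n}(t,\cdot)$, which is globally one-sided Lipschitz by \eqref{f_2} and satisfies $\langle x, f_{k,\varepsilon_n}(t,x)\rangle \le \sum_{\alpha}\|k(\cdot,\alpha)\|_\infty$ by \eqref{f_4}, so Assumption \ref{Ass_2}(i)--(iii) hold, and (iv) holds for $h<\varepsilon_n^2/L_k$, i.e. for $n$ large since $\varepsilon_n^2/(L_k\Delta t)=c^2>1$. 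The hypothesis \eqref{ass: main_3} then must be checked: the second bound ($\bE[|X_\varepsilon(s)-X_\varepsilon(t_\ell)|^2]\le C/n$) follows from \eqref{moment_0} and standard SDE estimates uniformly in $\varepsilon$, while the bound $\sup_\ell\bE[|r^{(n)}(f_{k,\varepsilon_n},\ell)|^2]\le C(\Delta t)^3$ requires controlling $|f_{k,\varepsilon_n}(s,X_{\varepsilon_n}(s))-f_{k,\varepsilon_n}(t_\ell,X_{\varepsilon_n}(t_\ell))|$. Here one uses $|g_{\varepsilon}(u)-g_{\varepsilon}(v)|\le \varepsilon^{-2}|u-v|$, so increments of $f_{k,\varepsilon_n}$ are bounded by $\varepsilon_n^{-2}\bigl(\text{space increment of }X_{\varepsilon_n}\bigr)$ plus the $1/2$-Hölder-in-time bound on $k$; since $\varepsilon_n^{-2}\asymp (\Delta t)^{-1}$, this contributes $\Delta t\cdot (\Delta t)^{-2}\cdot \bE[|X_{\varepsilon_n}(s)-X_{\varepsilon_n}(t_\ell)|^2]\le \Delta t\cdot(\Delta t)^{-2}\cdot C\Delta t = C\Delta t$ per unit time — wait, this needs more care: the bound on $\varepsilon_n^{-2}$ is exactly the issue, and one must instead exploit that $g_{\varepsilon_n}(\langle\alpha,X_{\varepsilon_n}(\cdot)\rangle)=\langle\alpha,X_{\varepsilon_n}(\cdot)\rangle^{-1}$ on the (high-probability) event that all coordinates stay above $\varepsilon_n$, reducing the analysis on that event to the argument in the proof of Theorem \ref{main_1} with negative moments from Theorem \ref{Thm_negative}.

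For the first (truncation) term I would argue as follows. On the event $A_n:=\{\min_{\alpha\in R_+}\inf_{t\le T}\langle\alpha,X(t)\rangle>\varepsilon_n \text{ and } \min_{\alpha}\inf_t\langle\alpha,X_{\varepsilon_n}(t)\rangle>\varepsilon_n\}$, the coefficients of \eqref{SDE_1} and \eqref{SDE_app} coincide along both paths, so pathwise uniqueness gives $X\equiv X_{\varepsilon_n}$ on $A_n$; hence $X(t)-X_{\varepsilon_n}(t)$ is supported on $A_n^c$. Using Theorem \ref{Thm_negative} (valid since $p_*>8>2$), Markov's inequality gives $\bP(A_n^c)\le C\varepsilon_n^{p}=C(c^2L_k\Delta t)^{p/2}$ for any $p<p_*-2$; choosing $p$ close to $p_*-2>6$ makes this $O((\Delta t)^{3})$ or better. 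Then by Cauchy--Schwarz and the uniform moment bounds \eqref{moment_1}, \eqref{moment_0},
$$
\bE\Big[\sup_t|X(t)-X_{\varepsilon_n}(t)|^2\Big]
\leq
\bE\Big[\sup_t|X(t)-X_{\varepsilon_n}(t)|^4\Big]^{1/2}\bP(A_n^c)^{1/2}
\leq
C(\Delta t)^{3/2}\cdot\dots,
$$
which is $o(1/n)$, i.e. negligible relative to the target rate. Here I would need a discrete analogue: since the theorem concerns $\sup_\ell$ over grid points, restricting to $t=t_\ell$ only helps.

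The main obstacle I anticipate is the bound $\sup_\ell \bE[|r^{(n)}(f_{k,\varepsilon_n},\ell)|^2]\le C(\Delta t)^3$ needed to invoke Theorem \ref{main_3}, precisely because the Lipschitz constant of $f_{k,\varepsilon_n}$ blows up like $\varepsilon_n^{-2}\asymp (L_k\Delta t)^{-1}$. A crude Lipschitz estimate loses too much; one must decompose $r^{(n)}(f_{k,\varepsilon_n},\ell)$ according to whether $\langle\alpha,X_{\varepsilon_n}(s)\rangle$ stays above $\varepsilon_n$, on which event $f_{k,\varepsilon_n}=f_k$ and the Theorem \ref{main_1} computation (using $p_*>8$ so that enough negative moments of order up to $8$ are available, via $p_*-2>6$) applies, and otherwise use Hölder's inequality together with the small probability $\bP(\langle\alpha,X_{\varepsilon_n}(s)\rangle\le\varepsilon_n)\le C\varepsilon_n^{p}$ to absorb the $\varepsilon_n^{-2}$ growth. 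Balancing the exponents — the $\varepsilon_n^{-4}$ from squaring against $\varepsilon_n^{p}$ from the probability, with $p$ up to just below $p_*-2$ — is where the hypothesis $p_*>8$ is consumed, and getting these bookkeeping exponents to close is the delicate part of the argument.
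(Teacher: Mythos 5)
Your proposal is correct and its skeleton --- triangle inequality through $X_{\varepsilon_n}$, Theorem \ref{main_3} applied to $V=X_{\varepsilon_n}$ with $g=f_{k,\varepsilon_n}$, and a good-event/bad-event split to control $r^{(n)}(f_{k,\varepsilon_n},\ell)$ --- is the same as the paper's proof. The one substantive difference is the truncation term $\bE[\sup_\ell |X(t_\ell)-X_{\varepsilon_n}(t_\ell)|^2]^{1/2}$: the paper invokes Lemma~\ref{Lem_app_X} (a Gronwall estimate giving $O(\varepsilon_n)$ in $L^p$ for $p<p_*/2$), while you observe that $X\equiv X_{\varepsilon_n}$ pathwise on $\{\tau_{\varepsilon_n}>T\}$ and control the complement by Cauchy--Schwarz together with Lemma~\ref{Lem_theta_0}. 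Both give the required $O(\sqrt{\Delta t})$ rate under $p_*>8$; your route is a bit more elementary (it needs only the coincidence observation plus the uniform moment bounds \eqref{moment_1}, \eqref{moment_0}), whereas Lemma~\ref{Lem_app_X} is sharper as a standalone pathwise $L^p$ estimate. One small caveat: the coincidence claim should be phrased via a stopping-time argument ($X_{\varepsilon_n}(\cdot\wedge\tau_{\varepsilon_n})$ solves \eqref{SDE_1} up to $\tau_{\varepsilon_n}$, so local pathwise uniqueness gives $X=X_{\varepsilon_n}$ on $[0,\tau_{\varepsilon_n}]$), rather than by appealing to ``pathwise uniqueness on the event $A_n$'' directly.

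On the ``delicate bookkeeping'' you flagged for $\sup_\ell\bE[|r^{(n)}(f_{k,\varepsilon_n},\ell)|^2]\le C(\Delta t)^3$: it does close, and this is precisely where $p_*>8$ is consumed. On the complement of $\{\tau_{\varepsilon_n}>t_{\ell+1}\}$, one bounds $g_{\varepsilon_n}^2$ by $\varepsilon_n^{-2}$ and the squared increment of $g_{\varepsilon_n}$ by $\varepsilon_n^{-4}|X_{\varepsilon_n}(s)-X_{\varepsilon_n}(t)|^2$; applying H\"older and Lemma~\ref{Lem_theta_0} at exponent $p=6$ (admissible since $p<p_*-2$, i.e.\ $p_*>8$) together with the crude increment bound $\bE[|X_{\varepsilon_n}(s)-X_{\varepsilon_n}(t)|^6]\le C(\Delta t)^3$ (valid because $\|f_{k,\varepsilon_n}\|_\infty\lesssim\varepsilon_n^{-1}$ and $\varepsilon_n^2=c^2L_k\Delta t$) recovers an $O(\Delta t)$ bound for the inner expectation, which combined with the two outer factors of $\Delta t$ yields $(\Delta t)^3$. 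On $\{\tau_{\varepsilon_n}>t_{\ell+1}\}$ one has $g_{\varepsilon_n}(\langle\alpha,X_{\varepsilon_n}(\cdot)\rangle)=\langle\alpha,X(\cdot)\rangle^{-1}$ and the argument reduces to Theorem~\ref{Thm_negative} and Lemma~\ref{Lem_Kol_0}, exactly as in the proof of Theorem~\ref{main_1}.
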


We first prove that SDE $X_{\varepsilon}$ defined in \eqref{SDE_app} approximates $X$ in $L^{p}$ sense.

\begin{Lem}\label{Lem_app_X}
Suppose that $p_{*} >4$.
%and Assumption \ref{Ass_1}  holds.
Then for any $p \in [2,p_{*}/2)$, there exists $C_{p}>0$ such that for all $\varepsilon>0$ it holds that
\begin{align*}
\bE
\Big[
\sup_{0 \leq t \leq T}
\Big|
X(t)
-
X_{\varepsilon}(t)
\Big|^{p}
\Big]^{1/p}
\leq
C_{p}\varepsilon.
\end{align*}
\end{Lem}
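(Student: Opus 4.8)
The plan is to apply It\^o's formula to $|X(t)-X_\varepsilon(t)|^{2}$, absorb the troublesome drift difference $f_{k}-f_{k,\varepsilon}$ using the monotonicity of the truncation, and close the estimate by a Burkholder--Davis--Gundy step followed by Gr\"onwall's inequality. Write $Z(t):=|X(t)-X_{\varepsilon}(t)|^{2}$. Since $X$ solves \eqref{SDE_1} and $X_{\varepsilon}$ solves \eqref{SDE_app} with the same initial value $\xi$, It\^o's formula gives
\begin{align*}
Z(t)
&=
\int_{0}^{t} 2\langle X(s)-X_{\varepsilon}(s),\,(\sigma(s,X(s))-\sigma(s,X_{\varepsilon}(s)))\rd B(s)\rangle
+
\int_{0}^{t}|\sigma(s,X(s))-\sigma(s,X_{\varepsilon}(s))|^{2}\rd s
\\&\quad
+
\int_{0}^{t} 2\langle X(s)-X_{\varepsilon}(s),\,b(s,X(s))-b(s,X_{\varepsilon}(s))\rangle\rd s
+
\int_{0}^{t} 2\langle X(s)-X_{\varepsilon}(s),\,f_{k}(s,X(s))-f_{k,\varepsilon}(s,X_{\varepsilon}(s))\rangle\rd s.
\end{align*}
For the last integrand I would split $f_{k}(s,X(s))-f_{k,\varepsilon}(s,X_{\varepsilon}(s))=\big(f_{k}(s,X(s))-f_{k,\varepsilon}(s,X(s))\big)+\big(f_{k,\varepsilon}(s,X(s))-f_{k,\varepsilon}(s,X_{\varepsilon}(s))\big)$: the second bracket contributes a nonpositive term thanks to the monotonicity \eqref{f_2}, while the first is controlled pointwise by \eqref{f_3}, so that, after Young's inequality,
\begin{align*}
2\langle X(s)-X_{\varepsilon}(s),\,f_{k}(s,X(s))-f_{k,\varepsilon}(s,X_{\varepsilon}(s))\rangle
\leq
Z(s)
+
\varepsilon^{2}\Big(\sum_{\alpha\in R_{+}}\frac{\|k(\cdot,\alpha)\|_{\infty}|\alpha|}{\langle\alpha,X(s)\rangle^{2}}\Big)^{2}.
\end{align*}
Together with the Lipschitz bounds for $b$ and $\sigma$ from Assumption \ref{Ass_1}\ref{Ass_1: i}--\ref{Ass_1: ii}, this yields $\sup_{s\le t}Z(s)\le \sup_{s\le t}|\mathcal{M}(s)|+C\int_{0}^{t}Z(s)\rd s+\varepsilon^{2}\int_{0}^{t}\Phi(s)\rd s$, where $\mathcal{M}$ denotes the stochastic integral above and $\Phi(s):=\big(\sum_{\alpha\in R_{+}}\|k(\cdot,\alpha)\|_{\infty}|\alpha|\langle\alpha,X(s)\rangle^{-2}\big)^{2}$.

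Next I would raise the inequality to the power $p/2\ge 1$, take expectations, and use the Burkholder--Davis--Gundy inequality for $\mathcal{M}$ together with $|\sigma(s,X(s))-\sigma(s,X_{\varepsilon}(s))|^{2}\le\|\sigma\|_{\mathrm{Lip}}^{2}Z(s)$ and Young's inequality to absorb a term $\tfrac12\bE[\sup_{s\le t}Z(s)^{p/2}]$ into the left-hand side; the remaining time integrals are treated by Jensen's inequality, giving
\begin{align*}
\bE\Big[\sup_{s\le t}Z(s)^{p/2}\Big]
\leq
C\int_{0}^{t}\bE\Big[\sup_{u\le s}Z(u)^{p/2}\Big]\rd s
+
C\varepsilon^{p}\int_{0}^{T}\bE\big[\Phi(s)^{p/2}\big]\rd s.
\end{align*}
Since $\Phi(s)^{p/2}\le |R_{+}|^{p-1}\sum_{\alpha\in R_{+}}(\|k(\cdot,\alpha)\|_{\infty}|\alpha|)^{p}\langle\alpha,X(s)\rangle^{-2p}$, the hypothesis $p<p_{*}/2$ (i.e.\ $2p<p_{*}$) together with \eqref{negative_0} of Theorem \ref{Thm_negative} makes the last integral finite, so Gr\"onwall's inequality gives $\bE[\sup_{t\le T}|X(t)-X_{\varepsilon}(t)|^{p}]\le C_{p}\varepsilon^{p}$, which is the assertion. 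To make the absorption rigorous one should first run the whole argument with all integrals stopped at $\tau_{m}:=\inf\{t:|X(t)|+|X_{\varepsilon}(t)|>m\text{ or }\min_{\alpha\in R_{+}}\langle\alpha,X(t)\rangle<1/m\}$, obtain the bound with $C_{p}$ independent of $m$, and then let $m\to\infty$ by Fatou's lemma, using that $\lim_{m\to\infty}\bP(\tau_{m}\le T)=0$ because $X$ is $\bW$-valued with continuous paths (so $\inf_{[0,T]}\min_{\alpha}\langle\alpha,X\rangle>0$ a.s.) and because of \eqref{moment_1} and \eqref{moment_0}.

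The main obstacle is exactly the cross term $\langle X-X_{\varepsilon},\,f_{k}(\cdot,X)-f_{k,\varepsilon}(\cdot,X_{\varepsilon})\rangle$: one cannot invoke any Lipschitz property of $f_{k}$ or $f_{k,\varepsilon}$ uniformly in $\varepsilon$, so the estimate must rest on the decomposition into a monotone part controlled by \eqref{f_2} and a pointwise remainder of order $\varepsilon\sum_{\alpha}\|k(\cdot,\alpha)\|_{\infty}|\alpha|\langle\alpha,X\rangle^{-2}$ coming from \eqref{f_3}. Squaring this remainder is precisely what forces negative moments of order $2p$ and hence the restriction $p<p_{*}/2$; everything else is a routine moment bookkeeping built on Theorem \ref{Thm_negative}.
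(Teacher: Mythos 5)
Your argument is correct and follows essentially the same route as the paper: apply It\^o's formula to $|X(t)-X_{\varepsilon}(t)|^{2}$, split the drift difference so that the monotone part \eqref{f_2} is dropped and the remainder is bounded pointwise by \eqref{f_3}, raise to the power $p/2$, use Burkholder--Davis--Gundy, invoke the negative moment bound \eqref{negative_0} with exponent $2p<p_{*}$, and close with Gr\"onwall. Your extra remarks on localizing via stopping times and the Young-absorption step fill in details the paper's proof leaves implicit, but do not constitute a different method.
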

\begin{proof}
By using It\^o's formula, Assumption \ref{Ass_1}, items \ref{Ass_1: i}, \ref{Ass_1: ii}, and estimate \eqref{f_2}, we obtain that
\begin{align*}
\left|
X(t)
-
X_{\varepsilon}(t)
\right|^{2}
&\leq
\int_{0}^{t}
\langle X(s)-X_{\varepsilon}(s), \{\sigma(s,X(s))-\sigma(s,X_{\varepsilon}(s))\} \rd B(s) \rangle\\
&\quad
+
2\int_{0}^{t}
\langle X(s)-X_{\varepsilon}(s), b(s,X(s))-b(s,X_{\varepsilon}(s)) \rangle
\rd s\\
&\quad
+2\int_{0}^{t}
\langle X(s)-X_{\varepsilon}(s), f_{k}(s,X(s))-f_{k,\varepsilon}(s,X(s)) \rangle
\rd s \\
&\quad+\int_{0}^{t} |X(s)-X_{\varepsilon}(s)|^{2} \rd s.
%\\&\quad
%+
%2\int_{0}^{t}
%\langle X(s)-X_{\varepsilon}(s), f_{k,\varepsilon}(X(s))-f_{k,\varepsilon}(X_{\varepsilon}(s)) \rangle
%\rd s
%\\&\leq
%2\int_{0}^{t}
%|X(s)-X_{\varepsilon}(s)|
%\cdot
%|f_{k}(X(s))-f_{k,\varepsilon}(X(s))|
%\rd s
%+\int_{0}^{t} |X(s)-X_{\varepsilon}(s)|^{2} \rd s
%\\&\leq
%(1+2K)
%\int_{0}^{t}
%|X(s)-X_{\varepsilon}(s)|^{2}
%\rd s
%+
%\int_{0}^{t}
%|f_{k}(X(s))-f_{k,\varepsilon}(X(s))|^{2}
%\rd s.
\end{align*}
Since $X(t) \in \bW$, a.s., \eqref{f_3} and Burkholder--Davis--Gundy's inequality imply that there exists a positive constant $C_{p}$ such that for all $t \in [0,T]$ it holds that
\begin{align*}
\bE\Big[
\sup_{0 \leq u \leq t}
\Big|
X(u)
-
X_{\varepsilon}(u)
\Big|^{p}
\Big]
&\leq
C_{p}
\int_{0}^{t}
\bE\Big[
\sup_{0 \leq u \leq s}
\Big|
X(u)
-
X_{\varepsilon}(u)
\Big|^{p}
\Big]
\rd s
\\&\quad+
C_{p}
\varepsilon^{p}
\sum_{\alpha \in R_{+}}
\int_{0}^{T}
\bE[\langle \alpha,X(s) \rangle^{-2p}]
\rd s.
%\\&\leq
%C_{p}
%\int_{0}^{t}
%\bE\left[
%\sup_{0 \leq u \leq s}
%\left|
%X(u)
%-
%X_{\varepsilon}(u)
%\right|^{p}
%\right]
%\rd s
%+
%C_{p}\varepsilon^{p}.
\end{align*}
By using Theorem \ref{Thm_negative} and  Gronwall's inequality, we obtain the desired result.
\end{proof}

For $\varepsilon>0$, we define a stopping times $\tau_{\varepsilon}^{\alpha}$ by: 
$\tau_{\varepsilon}^{\alpha}(w) :=\inf\{s \in (0,T]~|~\langle \alpha, X(s, w) \rangle=\varepsilon\}$ for $\alpha \in R_{+}$. %if the set  $\{s>0~|~\langle \alpha, X(s, w) \rangle=\varepsilon\}$ is non-empty, and $\tau_{\varepsilon}^{\alpha}(w)= +\infty$, otherwise.  
Let set $\tau_{\varepsilon}:=\min_{\alpha \in R_{+}} \tau_{\varepsilon}^{\alpha}$.
Then we estimate the probability of this event $\tau_{\varepsilon} \leq t_{\ell}$ for each $\ell=1,\ldots,n$.

\begin{Lem}\label{Lem_theta_0}
Assume that $p_{*}>2$.
Then for any $p \in (0,p_{*}-2)$, there exists $C_{p}>0$ such that
\begin{align*}
\sup_{\ell=1,\ldots,n}
\bP(\tau_{\varepsilon} \leq t_{\ell})
\leq
\sup_{\ell=1,\ldots,n}
\sum_{\alpha \in R_{+}}
\bP\Big(
\min_{s \in [0,t_{\ell}]}
\langle \alpha, X(s) \rangle \leq \varepsilon
\Big)
\leq
C_{p}\varepsilon^{p}.
\end{align*}
\end{Lem}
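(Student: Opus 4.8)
The plan is to establish the two inequalities separately: the left-hand one is a union bound combined with path continuity, while the right-hand one is Chebyshev's inequality applied to the path-uniform negative moment bound supplied by Theorem \ref{Thm_negative}.

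First I would write $\{\tau_\varepsilon \le t_\ell\} = \bigcup_{\alpha \in R_{+}}\{\tau_\varepsilon^\alpha \le t_\ell\}$. On the event $\{\tau_\varepsilon^\alpha \le t_\ell\}$, since $s \mapsto \langle\alpha, X(s)\rangle$ is continuous the infimum defining $\tau_\varepsilon^\alpha$ is attained, so $\langle\alpha, X(\tau_\varepsilon^\alpha)\rangle = \varepsilon$ with $\tau_\varepsilon^\alpha \in (0,t_\ell]$, whence $\min_{s\in[0,t_\ell]}\langle\alpha,X(s)\rangle \le \varepsilon$. Taking a union bound over the finite set $R_{+}$ gives $\bP(\tau_\varepsilon \le t_\ell) \le \sum_{\alpha \in R_{+}} \bP(\min_{s\in[0,t_\ell]}\langle\alpha,X(s)\rangle \le \varepsilon)$, uniformly in $\ell$.

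For the second inequality, fix $\ell$ and $\alpha$. By Theorem \ref{Thm_1}, $X$ is $\bW$-valued, so $\langle\alpha,X(s)\rangle > 0$ for all $s$ a.s.; hence on $\{\min_{s\in[0,t_\ell]}\langle\alpha,X(s)\rangle \le \varepsilon\}$ we have $\sup_{s\in[0,T]}\langle\alpha,X(s)\rangle^{-p} \ge \varepsilon^{-p}$, because $t\mapsto t^{-p}$ is decreasing on $(0,\infty)$ and $[0,t_\ell]\subseteq[0,T]$. Markov's inequality then yields
\[
\bP\Big(\min_{s\in[0,t_\ell]}\langle\alpha,X(s)\rangle \le \varepsilon\Big)
\le
\varepsilon^{p}\,\bE\Big[\sup_{s\in[0,T]}\langle\alpha,X(s)\rangle^{-p}\Big].
\]
Since $p_{*} > 2$ and $p \in (0, p_{*}-2)$, the bound \eqref{negative_1} in Theorem \ref{Thm_negative} gives $M_{p} := \sup_{\alpha\in R_{+}}\bE[\sup_{s\in[0,T]}\langle\alpha,X(s)\rangle^{-p}] < \infty$. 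Summing over $\alpha \in R_{+}$ and putting $C_{p} := |R_{+}|\,M_{p}$, which is independent of $\varepsilon$ and $\ell$, completes the argument.

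There is no real obstacle here: the statement is essentially a restatement of the estimate \eqref{negative_1} of Theorem \ref{Thm_negative} via Chebyshev's inequality. The only point to be careful about is to invoke the path-uniform negative moment \eqref{negative_1} rather than the pointwise-in-time bound \eqref{negative_0}, which is exactly why the hypothesis must be $p_{*} > 2$ (and $p < p_{*}-2$) rather than merely $p_{*} > 0$.
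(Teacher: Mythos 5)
Your proof is correct and follows essentially the same route as the paper: decompose $\{\tau_\varepsilon \le t_\ell\}$ over $\alpha \in R_+$, apply a union bound, and then use Markov's inequality together with the path-uniform negative moment estimate \eqref{negative_1} from Theorem \ref{Thm_negative}. Your remark on why $p_* > 2$ (as opposed to $p_* > 0$) is needed is exactly the right observation.
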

\begin{proof}
By the definition of the stopping times $\tau_{\varepsilon_n}^{\alpha}$, $\alpha \in R_{+}$ and Theorem \ref{Thm_negative}, it follows from Jensen's inequality and Markov's inequality that
\begin{align*}
\bP(\tau_{\varepsilon}\leq t_{\ell})
&=
\bP\Big(
\bigcup_{\alpha \in R_{+}}
\Big\{
\min_{s \in [0,t_{\ell}]}
\langle \alpha, X(s) \rangle \leq \varepsilon
\Big\}
\Big)
\leq
\sum_{\alpha \in R_{+}}
\bP\Big(
\min_{s \in [0,t_{\ell}]}
\langle \alpha, X(s) \rangle \leq \varepsilon
\Big)
\\&\leq
\varepsilon^{p}
\sum_{\alpha \in R_{+}}
\bE\Big[
\sup_{s \in [0,T]}
\langle \alpha, X(s) \rangle^{-p}
\Big]
\leq
C_{p}
\varepsilon^{p},
\end{align*}
for some $C_{p}>0$.
This concludes the assertion.
\end{proof}

We are now ready to prove Theorem \ref{main_2}.

%\subsection{Proof of main results}\label{subsec_proof}

\begin{proof}[Proof of Theorem \ref{main_2}]
%We set $g:[0,T]\times\bR^{d} \to \bR^{d}$ by $g(t,x)=f_{k,\varepsilon_n}(t,x)$ for $(t,x) \in [0,T] \times \bR^{d}$.
By Theorem \ref{main_3}, Lemma \ref{Lem_Kol_0}, and Lemma \ref{Lem_app_X}, we need to prove that there exists $C>0$ such that
\begin{equation}\label{need to prove: main 2: eq1}
\sup_{\ell=1,\ldots,n}
\bE[|r^{(n)}(f_{k,\varepsilon_n},\ell)|^{2}]
\leq
C(\Delta t)^{3}.
\end{equation}
By the definition of $r^{(n)}(f_{k,\varepsilon_{n},\ell})$, we have
\begin{align}\label{eq_r_fn}
\begin{split}
\bE[|r^{(n)}(f_{k,\varepsilon_{n}},\ell)|^{2}]
&\leq
2\Delta t
\int_{t_{\ell}}^{t_{\ell+1}}
\bE\Big[
\big|f_{k,\varepsilon_{n}}(s,X_{\varepsilon_n}(s))-f_{k,\varepsilon_{n}}(t_{\ell},X_{\varepsilon_n}(t_{\ell}))\big|^{2}
\\&\hspace{3cm}
+
\big|f_{k,\varepsilon_{n}}(s,X_{\varepsilon_n}(s))-f_{k,\varepsilon_{n}}(t_{\ell+1},X_{\varepsilon_n}(t_{\ell+1}))\big|^{2}
\Big]
\rd s.
\end{split}
\end{align}
Note that by the definition of $f_{k,\varepsilon_{n}}$, for any $s,t \in [t_{\ell},t_{\ell+1}]$ with $s < t$ and $x,y \in \bR^{d}$ it holds that
\begin{align}\label{eq_thm_0}
&|f_{k,\varepsilon_n}(s,x)-f_{k,\varepsilon_n}(t,y)|^{2}
\leq
2|f_{k,\varepsilon_n}(s,x)-f_{k,\varepsilon_n}(t,x)|^{2}
+
2|f_{k,\varepsilon_n}(t,x)-f_{k,\varepsilon_n}(t,y)|^{2}
\notag
\\&\leq
2\Big|
\sum_{\alpha \in R_{+}}
\{k(s,\alpha)-k(t,\alpha)\}g_{\varepsilon_n}(\langle \alpha,x \rangle)
\alpha
\Big|^{2}
+
2\Big|
\sum_{\alpha \in R_{+}}
k(s,\alpha)
\{g_{\varepsilon_n}(\langle \alpha,x \rangle)-g_{\varepsilon_n}(\langle \alpha,y \rangle)\}
\alpha
\Big|^{2}
\notag
\\&\leq
2[k]_{1/2}^{2}|R_{+}|
(t-s)
\sum_{\alpha \in R_{+}}
|\alpha|^{2}
g_{\varepsilon_n}(\langle \alpha,x \rangle)^{2}
\notag
\\&\quad+
2|R_{+}| \sum_{\alpha \in R_{+}} \|k(\cdot, \alpha)\|^2_{\infty}
| \alpha|^{2}| 
g_{\varepsilon_n}(\langle \alpha,x \rangle)-g_{\varepsilon_n}(\langle \alpha,y \rangle)|^{2}.
\end{align}
We consider the estimate for the right hand side of \eqref{eq_r_fn} on $\tau_{\varepsilon_n}>t_{\ell+1}$ and $\tau_{\varepsilon_n}\leq t_{\ell+1}$, respectively.
First, we first consider  $\omega \in \{\tau_{\varepsilon_n}>t_{\ell+1}\}$.
Then
%for all $\omega\in\Omega$ with $\tau_{\varepsilon_n}(\omega)>t_{\ell+1}$, it holds that 
$\min_{\alpha \in R_{+}} \min_{s \in [0,t_{\ell+1}]} \langle \alpha,X(s,\omega) \rangle >\varepsilon_n$.
Hence, it holds that for all $t \in [0,t_{\ell+1}]$, $\omega\in\Omega$ with $\tau_{\varepsilon_n}(\omega)>t_{\ell+1}$,
\begin{align*}
X(t,\omega)=X_{\varepsilon_n}(t,\omega)
\text{ and }
g_{\varepsilon_n}(\langle \alpha,X_{\varepsilon_n}(t,\omega) \rangle)=\langle \alpha,X(t,\omega) \rangle^{-1}.
\end{align*}
Therefore, by using Lemma \ref{Thm_negative} and Lemma \ref{Lem_Kol_0}, there exist $C_{1},C_{2}>0$ such that for any $s,t \in [t_{\ell}, t_{\ell+1}]$ with $s<t$
\begin{align}\label{eq_thm_1}
\bE\Big[
g_{\varepsilon_n}(\langle \alpha, X_{\varepsilon_n}(t) \rangle)^{2}
\1_{\{\tau_{\varepsilon_n}>t_{\ell+1} \}}
\Big]
&=
\bE\Big[
\langle \alpha, X(t) \rangle^{-2}
\1_{\{\tau_{\varepsilon_n}>t_{\ell+1}\}}
\Big]
\leq
\bE\Big[
\langle \alpha, X(t) \rangle^{-2}
\Big]
\leq
C_{1},
\end{align}
and
\begin{align}\label{eq_thm_2}
&
\bE\Big[
\Big|
g_{\varepsilon_n}(\langle \alpha, X_{\varepsilon_n}(s) \rangle)
-
g_{\varepsilon_n}(\langle \alpha, X_{\varepsilon_n}(t) \rangle)
\Big|^{2}
\1_{\{\tau_{\varepsilon_n}>t_{\ell+1}\}}
\Big]
\notag
%\\&=
%\bE\Big[
%\Big|
%\frac{1}{\langle \alpha,X(s)\rangle}
%-
%\frac{1}{\langle \alpha,X(t)\rangle}
%\Big|^{2}
%\1_{\{\tau_{\varepsilon_n}>t_{\ell+1}\}}
%\Big]
%\notag
\\&\leq
\bE\Big[
\frac
{|\alpha|^2|X(t)-X(s)|^2}
{\langle \alpha, X(s) \rangle^{2}\langle \alpha, X(t) \rangle^{2}}
\Big]
\notag
\\&\leq
|\alpha|^2 
\bE[|X(s)-X(t)|^{6}]^{1/3}
\bE[\langle \alpha, X(s) \rangle^{-6}]^{1/3}
\bE[\langle \alpha, X(t) \rangle^{-6}]^{1/3}
%\leq
%C_{2}(t-s)
\notag
\\&\leq
C_{2} \Delta t.
\end{align}

%In the next step we note that Lemma \ref{Lem_theta_0} and the assumption $p_{*}>8$ ensure that there exists $C_3>0$ which satisfies for all $n\in \bN$, $\ell=0,1,\dots,n-1$  that
%Combining this with the fact that for all $\varepsilon>0$, $x\in\bR$ it holds that $g_{\varepsilon}(x)\leq \varepsilon^{-1}$ implies that all $n\in \bN$, $\ell\in \{0,1,\dots,n-1\}$, $t \in [t_{\ell}, t_{\ell+1}]$ it holds that

We next consider the case $\tau_{\varepsilon_n}\leq t_{\ell+1}$.
By using Lemma \ref{Lem_theta_0} with $p=6$, there exists $C_{3}>0$ such that for any $t \in [t_{\ell}, t_{\ell+1}]$,
\begin{align}\label{eq_thm_3}
\bE\Big[
g_{\varepsilon_n}(\langle \alpha, X_{\varepsilon_n}(t) \rangle)^{2}
\1_{\{\tau_{\varepsilon_n} \leq t_{\ell+1}\}}
\Big]
&\leq 
\varepsilon_n^{-2}
\bP(\tau_{\varepsilon_n} \leq t_{\ell+1})
\leq
C_{3}\varepsilon_n^{4}.
\end{align}
Moreover, by using Lipschitz continuity of $f_{k,\varepsilon_n}$ \eqref{f_1} and H\"older's inequality, there exists $C_{4}>0$ such that for any $s,t \in [t_{\ell}, t_{\ell+1}]$ with $s<t$,
\begin{align*}
&
\bE\Big[
\Big|
g_{\varepsilon_n}(\langle \alpha, X_{\varepsilon_n}(s) \rangle)
-
g_{\varepsilon_n}(\langle \alpha, X_{\varepsilon_n}(t) \rangle)
\Big|^{2}
\1_{\{\tau_{\varepsilon_n} \leq t_{\ell+1}\}}
\Big]
\\&\leq
C_{4}
\varepsilon_n^{-4}
\bE\Big[
\Big|
\langle \alpha, X_{\varepsilon_n}(s) \rangle
-
\langle \alpha, X_{\varepsilon_n}(t) \rangle
\Big|^{6}
\Big]^{1/3}
\bP(\tau_{\varepsilon_n}\leq t_{\ell+1})^{2/3}
\\&\leq
C_{3}^{2/3}C_{4}|\alpha|^{2}
\bE\Big[
\Big|
X_{\varepsilon_n}(s)
-
X_{\varepsilon_n}(t)
\Big|^{6}
\Big]^{1/3}.
\end{align*}
Since $\sup_{(t,x) \in [0,T] \times \bR^{d}}|f_{k,\varepsilon_n}(t,x)|^{2} \leq \sum_{\alpha \in R_{+}}\|k(\cdot, \alpha)\|_{\infty}^{2} |\alpha|^{2} \varepsilon_n^{-2}$ and $\varepsilon_n^{2}=c^2 L_{k} \Delta t$, there exists $C_5>0$ such that for any $s,t\in [t_\ell,t_{\ell+1}]$, $s < t$, 
\begin{align*}
&\bE\Big[
\Big|
X_{\varepsilon_n}(s)
-
X_{\varepsilon_n}(t)
\Big|^{6}
\Big]
\\&\leq
3^{5}
\bE\Big[
\Big|
\int_{s}^{t}
\sigma(u,X_{\varepsilon_n}(u))
\rd B(u)
\Big|^{6}
+
(\Delta t)^{5}
\int_{s}^{t}
\Big\{
|b(u,X_{\varepsilon_n}(u))|^{6}
+
|f_{k,\varepsilon_n}(u,X_{\varepsilon_n}(u))|^{6}
\Big\}
\rd u
\Big]
\\&\leq
C_{5}(\Delta t)^{3}.
\end{align*}
Therefore we have
\begin{align}\label{eq_thm_4}
\bE\Big[
\Big|
g_{\varepsilon_n}(\langle \alpha, X_{\varepsilon_n}(s) \rangle)
-
g_{\varepsilon_n}(\langle \alpha, X_{\varepsilon_n}(t) \rangle)
\Big|^{2}
\1_{\{t_{\ell+1} \geq \tau_{\varepsilon_n}\}}
\Big]
&\leq
C_{6}
\Delta t.
\end{align}

Combining  \eqref{eq_r_fn}, \eqref{eq_thm_0}, \eqref{eq_thm_1}, \eqref{eq_thm_2}, \eqref{eq_thm_3} and \eqref{eq_thm_4}, we obtain \eqref{need to prove: main 2: eq1}.
The proof is thus completed.
\end{proof}
Let $c>1$ and let $\varepsilon_n = c\sqrt{L_k \Delta t}$.
We finally prove that the probability of the event $X^{(n,\theta)}_{\varepsilon_{n}}(t_{\ell}) \in \bW$ for all $\ell=1,\ldots,n$, converges to $1$ as $n \to \infty$.
\begin{Thm}\label{main_5}
Let $c>1$ and let $\varepsilon_n = c\sqrt{L_k \Delta t}$.
Assume that $p_{*}>8$ and let $\theta_{0} \in [0,1/2)$.
Then for any $\gamma \in [3/4,1-2/p_{*})$, there exists $C_{\gamma,\theta_{0}}>0$ such that for any $n > TL_k (\min_{\alpha \in R_+}\langle \alpha, \xi\rangle)^{-2}$ it holds that
\begin{align*}
\inf_{\theta \in [0,\theta_{0}]}
\bP\big(
X^{(n,\theta)}_{\varepsilon_{n}}(t_{\ell}) \in \bW,~
\mathrm{for~all~}
\ell=1,\ldots,n
\big)
\geq
1-\frac{C_{\gamma,\theta_{0}}}{n^{\gamma}}.
\end{align*}
\end{Thm}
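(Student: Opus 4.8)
The plan is to transfer the negative--moment bounds available for the exact solution $X$ to the scheme $X^{(n,\theta)}_{\varepsilon_{n}}$ by means of the strong error estimate of Theorem \ref{main_2}. The elementary observation behind this is that a vector $x\in\bR^{d}$ lies in $\bW$ as soon as $\langle\alpha,x\rangle>0$ for every $\alpha\in R_{+}$; hence, if the exact solution stays uniformly far from all the walls $\{\langle\alpha,\cdot\rangle=0\}$ on $[0,T]$ and the scheme is uniformly close to $X$ at the grid points, then $X^{(n,\theta)}_{\varepsilon_{n}}(t_{\ell})\in\bW$ for every $\ell$. The proof then reduces to estimating the probabilities of the two complementary bad events and choosing the free parameters so that they balance.

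Concretely, I would fix $\theta_{0}\in[0,1/2)$ and $\gamma\in[3/4,1-2/p_{*})$, set $p:=\tfrac{2\gamma}{1-\gamma}$ (so that $p\in[6,p_{*}-2)$: the left endpoint uses $p_{*}>8$ and the right one uses $\gamma<1-2/p_{*}$), and choose the safety margin $\rho_{n}:=n^{-(1-\gamma)/2}$, arranged precisely so that $\rho_{n}^{p}=n^{-\gamma}$ and $(n\rho_{n}^{2})^{-1}=n^{-\gamma}$. Writing $e^{(n,\theta)}(\ell):=X(t_{\ell})-X^{(n,\theta)}_{\varepsilon_{n}}(t_{\ell})$ and $M:=\max_{\alpha\in R_{+}}|\alpha|$, I would introduce
\begin{align*}
A_{n}&:=\Big\{\min_{\alpha\in R_{+}}\min_{s\in[0,T]}\langle\alpha,X(s)\rangle>\rho_{n}\Big\},\\
B_{n}&:=\Big\{\sup_{\ell=1,\ldots,n}|e^{(n,\theta)}(\ell)|<\tfrac{\rho_{n}}{2M}\Big\}.
\end{align*}
On $A_{n}\cap B_{n}$ one has, for every $\ell=1,\ldots,n$ and every $\alpha\in R_{+}$,
\[
\langle\alpha,X^{(n,\theta)}_{\varepsilon_{n}}(t_{\ell})\rangle
=\langle\alpha,X(t_{\ell})\rangle-\langle\alpha,e^{(n,\theta)}(\ell)\rangle
>\rho_{n}-M\cdot\tfrac{\rho_{n}}{2M}=\tfrac{\rho_{n}}{2}>0,
\]
so that $X^{(n,\theta)}_{\varepsilon_{n}}(t_{\ell})\in\bW$ for all $\ell$; it therefore suffices to bound $\bP(A_{n}^{c})$ and $\bP(B_{n}^{c})$.

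For $A_{n}^{c}$ I would invoke Lemma \ref{Lem_theta_0}, which applies since $p\in(0,p_{*}-2)$ and yields
\[
\bP(A_{n}^{c})\le\sum_{\alpha\in R_{+}}\bP\Big(\min_{s\in[0,T]}\langle\alpha,X(s)\rangle\le\rho_{n}\Big)\le C_{p}\,\rho_{n}^{p}=C_{p}\,n^{-\gamma}.
\]
For $B_{n}^{c}$ I would combine Markov's inequality with the strong convergence estimate of Theorem \ref{main_2} (which requires exactly $p_{*}>8$, with a constant $C_{\theta_{0}}$ uniform over $\theta\in[0,\theta_{0}]$):
\[
\bP(B_{n}^{c})\le\frac{4M^{2}}{\rho_{n}^{2}}\,\bE\Big[\sup_{\ell=1,\ldots,n}|e^{(n,\theta)}(\ell)|^{2}\Big]\le\frac{4M^{2}C_{\theta_{0}}^{2}}{n\,\rho_{n}^{2}}=4M^{2}C_{\theta_{0}}^{2}\,n^{-\gamma}.
\]
Adding these and passing to the complement gives
\[
\bP\big(X^{(n,\theta)}_{\varepsilon_{n}}(t_{\ell})\in\bW,\ \forall\,\ell=1,\ldots,n\big)\ge1-C_{\gamma,\theta_{0}}\,n^{-\gamma},\qquad C_{\gamma,\theta_{0}}:=C_{p}+4M^{2}C_{\theta_{0}}^{2},
\]
and since neither $C_{p}$ nor $C_{\theta_{0}}$ depends on $\theta$, this holds uniformly over $\theta\in[0,\theta_{0}]$, which is the assertion. (For the finitely many admissible $n$ with $C_{\gamma,\theta_{0}}n^{-\gamma}\ge1$ the inequality is trivial, so no restriction on $n$ beyond the well-posedness threshold of the scheme is needed.)

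The step I expect to be the genuine crux is the bookkeeping of exponents rather than any new estimate: one must pick $p$ and $\rho_{n}$ so that the ``$X$ near a wall'' probability ($\asymp\rho_{n}^{p}$) and the ``scheme far from $X$'' probability ($\asymp n^{-1}\rho_{n}^{-2}$) have the common order $n^{-\gamma}$, and one must check that the admissible range of $\gamma$ is exactly $[3/4,1-2/p_{*})$ --- the lower endpoint $\gamma=3/4$ corresponds to $p=6$, which is available precisely under the hypothesis $p_{*}>8$ already imposed in Theorem \ref{main_2}, and the open upper endpoint reflects the constraint $p<p_{*}-2$ needed to apply Lemma \ref{Lem_theta_0}. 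All the analytic effort has been front-loaded into Theorem \ref{main_2} and Lemma \ref{Lem_theta_0}, so beyond this balancing there is no further obstacle.
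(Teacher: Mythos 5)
Your proof is correct and follows essentially the same strategy as the paper: decompose into the event that the exact solution $X$ stays uniformly far from all walls (controlled by Lemma \ref{Lem_theta_0}) and the event that the scheme is uniformly close to $X$ at grid points (controlled by Theorem \ref{main_2} plus Markov), then balance the two tails with the same choice of margin $\rho_n = n^{-(1-\gamma)/2}$, giving $p = 2\gamma/(1-\gamma) \in [6, p_*-2)$. The only cosmetic difference is that the paper phrases the decomposition through an auxiliary stopping time $\tau_{\varepsilon_n}^{(n,\theta)}$ (first grid time the \emph{scheme} is within $\varepsilon_n$ of a wall) and absorbs an extra $\varepsilon_n^p \asymp n^{-\gamma/(1-\gamma)}$ term that is negligible next to $n^{-\gamma}$, whereas your two-event split $A_n \cap B_n$ bounds the exit probability directly and avoids that term.
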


\begin{proof}
For $\theta \in [0,1/2)$, we set a stopping time $\tau_{\varepsilon_{n}}^{(n,\theta)}$ by
\begin{align*}
\tau_{\varepsilon_{n}}^{(n,\theta)}:=\min_{\alpha \in R_{+}}\inf\Big\{t_{\ell}~\Big|~\ell=1,\ldots,n,~\langle \alpha,X^{(n,\theta)}_{\varepsilon_{n}}(t_{\ell}) \rangle \leq \varepsilon_{n}\Big\}.
\end{align*}
Then we first prove that for any $\gamma \in [3/4,1-2/p_{*})$, there exists $C_{\gamma,\theta_{0}}>0$ such that for any $n > TL_k (\min_{\alpha \in R_+}\langle \alpha, \xi\rangle)^{-2}$, it holds that
\begin{align}\label{eq_5_0}
\sup_{\theta \in [0,\theta_{0}]}
\bP(\tau_{\varepsilon_{n}}^{(n,\theta)} \leq T)
\leq
\frac{C_{\gamma,\theta_{0}}}{n^{\gamma}}.
\end{align}

For $\delta>0$, we define the event $\Omega_{\delta,n,\theta}$ by
\begin{align*}
\Omega_{\delta,n,\theta}
:=
\Big\{
\sup_{\ell =1,\ldots,n}
\Big|
X(t_{\ell})
-
X_{\varepsilon_n}^{(n,\theta)}(t_{\ell})
\Big|
\geq \delta
\Big\}.
\end{align*}
Then we have
\begin{align*}
\bP(\tau_{\varepsilon_{n}}^{(n,\theta)} \leq T)
&=
\bP(\{\tau_{\varepsilon_{n}}^{(n,\theta)} \leq T\}\cap \Omega_{\delta,n,\theta})
+
\bP(\{\tau_{\varepsilon_{n}}^{(n,\theta)} \leq T\}\cap \Omega_{\delta,n,\theta}^{c})
\\&\leq
\bP(\Omega_{\delta,n,\theta})
+
\bP(\{\tau_{\varepsilon_{n}}^{(n,\theta)} \leq T\}\cap \Omega_{\delta,n,\theta}^{c}).
\end{align*}
By Theorem \ref{main_2}, it holds that
\begin{align*}%\label{main_4_0}
\sup_{\theta \in [0,\theta_0]}
\bP(\Omega_{\delta,n,\theta})
&\leq
\frac{1}{\delta^{2}}
\sup_{\theta \in [0,\theta_0]}
\bE
\Big[
\sup_{\ell =1,\ldots,n}
\Big|
X(t_{\ell})
-
X_{\varepsilon_n}^{(n,\theta)}(t_{\ell})
\Big|^{2}
\Big]
\leq
\frac{C_{\theta_{0}}}{\delta^{2} n}.
\end{align*}
On the other hand, it holds that
\begin{align*}
\bP(\{\tau_{\varepsilon_{n}}^{(n,\theta)} \leq T\}\cap \Omega_{\delta,n,\theta}^{c})
\leq
\sum_{\alpha \in R_{+}}
\bP\Big(
\Big\{
\min_{\ell =0,\ldots,n}
\langle \alpha, X_{\varepsilon_{n}}^{(n,\theta)}(t_{\ell}) \rangle \leq \varepsilon_{n}
\Big\}
\cap
\Omega_{\delta,n,\theta}^{c}
\Big).
\end{align*}
For each $\alpha \in R_{+}$, on the event $\{\min_{\ell =0,\ldots,n}
\langle \alpha, X_{\varepsilon_{n}}^{(n,\theta)}(t_{\ell}) \rangle \leq \varepsilon_{n}\}\cap \Omega_{\delta,n,\theta}^{c}$, there exists $\ell_{*}=0,\ldots,n$ such that
$
\langle \alpha, X_{\varepsilon_{n}}^{(n,\theta)}(t_{\ell_{*}}) \rangle
=
\min_{\ell =0,\ldots,n}
\langle \alpha, X_{\varepsilon_{n}}^{(n,\theta)}(t_{\ell}) \rangle \leq \varepsilon_{n}
$, and thus we have
\begin{align*}
\min_{\ell =0,\ldots,n}
\langle \alpha, X(t_{\ell}) \rangle
&\leq
\langle \alpha, X(t_{\ell_{*}}) \rangle
\leq
\langle \alpha, X_{\varepsilon_{n}}^{(n,\theta)}(t_{\ell_{*}}) \rangle
+
|\alpha|
\sup_{\ell=0,\ldots,n}
|X(t_{\ell})-X_{\varepsilon_{n}}^{(n,\theta)}(t_{\ell})|
\\&\leq
\varepsilon_{n}+\max_{\beta \in R_{+}}|\beta| \delta.
\end{align*}
Let $\gamma \in [3/4,1-2/p_{*})$, then $2\gamma/(1-\gamma) \in [6,p_{*}-2)$.
Then by Lemma \ref{Lem_theta_0}, we obtain
\begin{align*}
\sup_{\theta \in [0,\theta_{0}]}
\bP(\{\tau_{\varepsilon_{n}}^{(n,\theta)} \leq T\}\cap \Omega_{\delta,n,\theta}^{c})
&\leq
\sum_{\alpha \in R_{+}}
\bP\Big(\min_{\ell =0,\ldots,n}\langle \alpha, X(t_{\ell}) \rangle\leq \varepsilon_{n}+\max_{\beta \in R_{+}}|\beta|\delta\Big)
\\&\leq
C_{\gamma}
\Big\{
\frac{1}{n^{\gamma/(1-\gamma)}}
+
\delta^{2\gamma/(1-\gamma)}
\Big\},
\end{align*}
for some $C_{\gamma}>0$.
Therefore, we have
\begin{align*}
\sup_{\theta \in [0,\theta_{0}]}
\bP(\tau_{\varepsilon_{n}}^{(n,\theta)} \leq T)
\leq
\frac{C_{\theta_{0}}}{\delta^{2}n}
+
C_{\gamma}
\Big\{
\frac{1}{n^{\gamma/(1-\gamma)}}
+
\delta^{2\gamma/(1-\gamma)}
\Big\}.
\end{align*}
By choosing $\delta=n^{-(1-\gamma)/2}$, we obtain \eqref{eq_5_0} with some constant $C_{\gamma,\theta_{0}}>0$.
By using \eqref{eq_5_0}, it holds that
\begin{align*}
&\inf_{\theta \in [0,\theta_{0}]}
\bP\big(
X^{(n,\theta)}_{\varepsilon_{n}}(t_{\ell}) \in \bW,~
\mathrm{for~all~}
\ell=1,\ldots,n
\big)
\\&=
1
-
\sup_{\theta \in [0,\theta_{0}]}
\bP\big(
\mathrm{there~exist~}
\ell=1,\ldots,n
\mathrm{~and~}
\alpha \in R_{+}
~\mathrm{such~that}~
\langle \alpha,X^{(n,\theta)}_{\varepsilon_{n}}(t_{\ell}) \rangle
\leq 0
\big)
\\&\geq
1-\sup_{\theta \in [0,\theta_{0}]}
\bP(\tau_{\varepsilon_{n}}^{(n,\theta)} \leq T)
\geq
1-\frac{C_{\gamma,\theta_{0}}}{n^{\gamma}}.
\end{align*}

This concludes the assertion.
\end{proof}

\section*{Acknowledgements}
The authors are deeply grateful to Professor Yushi Hamaguchi for his valuable comments.
The second author was supported by the NAFOSTED under Grant Number 101.03-2021.36.
The third author was supported by JSPS KAKENHI Grant Numbers 21H00988 and 23K12988.

\end{document}